\ifdef{\crop}{%
\usepackage[includeheadfoot,twoside=False,paperwidth=448pt,paperheight=587pt,rmargin=15pt,lmargin=15pt,tmargin=15pt,bmargin=15pt]{geometry}%
}{%
\setlength{\topmargin}{22mm}
\addtolength{\topmargin}{-1in}
\setlength{\oddsidemargin}{27mm}
\addtolength{\oddsidemargin}{-1in}
\setlength{\evensidemargin}{27mm}
\addtolength{\evensidemargin}{-1in}
\setlength{\textwidth}{156mm}
\setlength{\textheight}{240mm}
}%
\theoremstyle{plain}
\newtheorem{thm}{Theorem}[section]
\newtheorem*{thm*}{Theorem}
\newaliascnt{prop}{thm}
\newaliascnt{cor}{thm}
\newaliascnt{lem}{thm}
\newaliascnt{claim}{thm}
\newaliascnt{defn}{thm}
\newaliascnt{ques}{thm}
\newaliascnt{conj}{thm}
\newaliascnt{fact}{thm}
\newaliascnt{rem}{thm}
\newaliascnt{ex}{thm}
\newtheorem{prop}[prop]{Proposition}
\newtheorem{cor}[cor]{Corollary}
\newtheorem{lem}[lem]{Lemma}
\newtheorem{claim}[claim]{Claim}
\newtheorem*{prop*}{Proposition}
\newtheorem*{cor*}{Corollary}
\newtheorem*{lem*}{Lemma}
\newtheorem*{claim*}{Claim}
\theoremstyle{definition}
\newtheorem{defn}[defn]{Definition}
\newtheorem{ques}[ques]{Question}
\newtheorem*{defn*}{Definition}
\newtheorem*{ques*}{Question}
\newtheorem*{conj*}{Conjecture}
\newtheorem*{prob*}{Problem}
\newtheorem{rem}[rem]{Remark}
\newtheorem{ex}[ex]{Example}
\newtheorem*{fact*}{Fact}
\newtheorem*{rem*}{Remark}
\newtheorem*{ex*}{Example}
\def\textsectionN~{\textsection{}}
\renewcommand\phi{\varphi}
\renewcommand\epsilon{\varepsilon}
\renewcommand\leq{\leqslant}
\renewcommand\geq{\geqslant}
\newcommand{\set}{  \@ifstar{\@setstar}{\@set}}\newcommand{\@setstar}[2]{\{\, #1 \mid #2 \,\}}
\newcommand{\@set}[1]{\{ #1 \}}
\newcommand{\trans}[1][1]{\raisebox{#1ex}{\scriptsize\kern0.1em$t$\kern-0.1em}}
\DeclareMathOperator{\mult}{mult}
\DeclareMathOperator{\Pic}{Pic}
\DeclareMathOperator{\Supp}{Supp}
\DeclareMathOperator{\vol}{vol}
\DeclareMathOperator{\lct}{lct}
\DeclareMathOperator{\Nklt}{Nklt}
\def\N{\mathbb{N}}
\def\Z{\mathbb{Z}}
\def\Q{\mathbb{Q}}
\def\R{\mathbb{R}}
\def\C{\mathbb{C}}
\def\r+{\mathbb{R}_{\geq 0}}
\def\ep{\varepsilon}
\def\r+{{\R}_{\geq 0}}
\def\q+{{\Q}_{\geq 0}}
\def\P{\mathbb{P}}
\def\arw{\rightarrow}
\def\*c{\C^{\times}}
\def\C{\mathbb {C}}
\def\N{\mathbb {N}}
\def\Q{\mathbb {Q}}
\def\R{\mathbb {R}}
\def\Z{\mathbb {Z}}
\newcommand{\calf}{\mathcal {F}}
\newcommand{\cali}{\mathcal {I}}
\newcommand{\calj}{\mathcal {J}}
\newcommand{\calo}{\mathcal {O}}
\title[Bpf thresholds and higher syzygies on abelian threefolds]{Basepoint-freeness thresholds and higher syzygies on abelian threefolds}
\author[A.~Ito]{Atsushi~Ito}
\address{Graduate School of Mathematics,
Nagoya University,
Nagoya, Japan}
\email{atsushi.ito@math.nagoya-u.ac.jp}
\subjclass[2020]{14C20,14K99}
\keywords{Syzygy, Abelian variety, Basepoint-freeness threshold}
\begin{document}

\maketitle

\begin{abstract}
For a polarized abelian variety,
Z.\ Jiang and G.\ Pareschi introduce an invariant and show that the polarization is basepoint free or projectively normal 
if the invariant is small.
Their result is generalized to higher syzygies by F.\ Caucci, that is, the polarization 
satisfies property $(N_p)$ if the invariant is small.

In this paper, we study a relation between the invariant and degrees of abelian subvarieties with respect to the polarization.
For abelian threefolds, we give an upper bound of the invariant using degrees of abelian subvarieties.
In particular,
we affirmatively answer some questions on abelian varieties asked by the author, V.~Lozovanu and Caucci in the three dimensional case.
\end{abstract}

\section{Introduction}\label{sec_intro}

Throughout this paper, we work over the complex number field $\C$.
In \cite{MR4157109}, Z.\ Jiang and G.\ Pareschi introduce \emph{cohomological rank functions} of
$\Q$-twisted (complexes of) coherent sheaves on polarized abelian varieties.
Using cohomological rank functions,
they define the \emph{basepoint-freeness threshold} $\beta(l)$ for a polarized abelian variety $(X,l)$ and show the following:
\begin{enumerate}
\item $0 < \beta(l)  \leq 1$, and $\beta(l)  < 1$ if and only if
any line bundle $L$ representing $l$ is basepoint free.
\item If $\beta(l)  < \frac1{2}$,
any line bundle $L$ representing $l$ is projectively normal.
\end{enumerate}
F.~Caucci  generalizes  (2) to higher syzygies,
proving that
any line bundle $L$ representing $l$ satisfies property $(N_p)$ if $\beta(l) < 1/(p+2)$ in \cite{MR4114062}\footnote{Caucci proves the result on an algebraically closed field of any characteristic.}.
We refer the readers to 
\cite[Chapter 1.8.D]{MR2095471}, \cite{MR2103875} for the definition of  $(N_p)$.
We just note here that $(N_p)$'s consist an increasing sequence of positivity properties.
For example, 
$L$ satisfies ($N_0$) if and only if $L$ defines a projectively normal embedding,
and $L$ satisfies ($N_1$) if and only if $L$ satisfies ($N_0$) and the homogeneous ideal of the embedding is generated by quadrics. 

In \cite[Theorem 1.1]{MR4114062},
Caucci asks the following question:

\begin{ques}[\cite{MR4114062}]\label{ques2}
Let $(X,l)$ be a polarized abelian variety.
Set $B_t=t l$ for a positive rational number $t > 0$.
Assume that $(B_t^{\dim Z} .Z) > (\dim Z)^{{\dim Z}}$ for any abelian subvariety $Z \subset X$ with $\dim Z \geq 1$.
Then does it hold that  $\beta(l) < t$?
\end{ques}


For $\dim X \leq 2$,
\autoref{ques2} is answered affirmatively; 
this can be checked easily for elliptic curves. For abelian surfaces,
it is deduced from \cite[Proposition 3.1]{MR3923760} and \cite[Proposition 1.4]{MR4114062}
as explained in \cite{{MR4114062}}.
For $\dim X=3$, the proof of 
\cite[Theorem 1.1]{lozovanu2018singular} 
with \cite[Proposition 1.4]{MR4114062} gives a partial affirmative answer,
i.e.\ 
$\beta(l) < t$ if $ (B_t^3) > 59, (B_t^2.S) >4, (B_t.C) >2$ for any abelian surface $S \subset X$ and any elliptic curve $C \subset X$.
In any dimension,
it is not difficult to show $\beta(l) < t $ under a stronger assumption $(B_t^{\dim Z} .Z) > (\dim X \cdot \dim Z)^{{\dim Z}}$ 
by combining results in \cite{MR2833789} and \cite{MR1393263}
(see \autoref{prop_bounds_any_dim}).

In this paper,
we give an affirmative answer to \autoref{ques2} in the three dimensional case:

\begin{thm}\label{main_thm}
Let $(X,l)$ be a polarized abelian threefold.
Set $B_t=tl$ for a positive rational number $t >0$.
Assume that $ (B_t^3) > 27, (B_t^2.S) >4, (B_t.C) >1$ for any abelian surface $S \subset X$ and any elliptic curve $C \subset X$.
Then $\beta(l) <t$.
\end{thm}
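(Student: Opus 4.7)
The plan is to apply Caucci's criterion \cite[Proposition 1.4]{MR4114062}: to establish $\beta(l)<t$, it suffices to construct, for some rational $s<t$, an effective $\Q$-divisor $D$ on $X$ with $D\equiv_{\Q}sl$ whose non-klt locus $\Nklt(X,D)$ has $0_X$ as an isolated point. By translation invariance of all relevant notions on the abelian variety $X$, this is enough. The task is therefore to build such a $D$ from the three numerical inputs.

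The first step is to produce a $\Q$-divisor that is non-klt at $0$. Since $(B_t^3)>27=3^3$, a standard asymptotic Riemann--Roch/dimension-counting argument yields, for $m\gg 0$, an effective $D_m\in|mB_t|$ with $\mult_0 D_m>3m$. Rescaling gives $D_0:=\frac{1}{m}D_m\equiv_{\Q}B_t$ with $\mult_0 D_0>3$, so $(X,D_0)$ is not log canonical at $0$; after multiplying by $c=\lct_0(X,D_0)<1$ one obtains $\overline{D}:=cD_0\equiv_{\Q}cB_t$, strictly log canonical at $0$. What remains---and what is genuinely delicate---is to force $0$ to be an \emph{isolated} point of $\Nklt(X,\overline{D})$, since the minimal non-klt center $W\ni 0$ could a priori be a curve or an abelian surface.

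The heart of the argument, and the main obstacle, is this reduction to $W=\{0\}$. The first move is a translation-averaging: if $W$ is positive-dimensional, its stabilizer $K\subset X$ is a positive-dimensional abelian subvariety, and averaging $\overline{D}$ over $K$ preserves both the numerical class and the non-klt condition at $0$ while enforcing $K$-invariance, so $W$ becomes a coset of $K$ through $0$; i.e., $W=K$ is an abelian subvariety of some dimension $d\in\{1,2\}$. Then the hypothesis $(B_t^d.W)>d^d$, combined with the already-known lower-dimensional cases of \autoref{ques2}, furnishes an effective $\Q$-divisor $E_W\equiv_{\Q}s_W B_t|_W$ on $W$ with $s_W<1$ and $\{0\}$ isolated in $\Nklt(W,E_W)$. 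Via the isogeny splitting $X\sim W\times Q$ with $Q:=X/W$, and the matching hypothesis on the complementary abelian subvariety $Q\hookrightarrow X$, one obtains a partner divisor $E_Q$ on $Q$; pulling back $E_W$ and $E_Q$ to $X$ and combining with a small multiple of $\overline{D}$ yields the sought $D$, with $D\equiv_{\Q}sl$ for some $s<t$ and $\{0\}$ isolated in $\Nklt(X,D)$. The crux is the coefficient bookkeeping that makes the \emph{sharp} numerical bounds $3^3,\ 2^2,\ 1^1$ suffice---as opposed to the weaker $59,\ 4,\ 2$ of the Nadel-vanishing approach in \cite{lozovanu2018singular}---which requires a tight subadjunction along $W$ together with a product-multiplier-ideal computation exploiting that both $W$ and $X/W$ are \emph{abelian} rather than merely smooth projective.
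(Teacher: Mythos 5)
Your plan has two genuine gaps, and the second one is exactly the obstruction the paper is built to circumvent.

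The ``translation\textendash averaging'' step does not force the minimal lc center $W$ through $0$ to be an abelian subvariety. If $W$ is a curve of general type (which cannot be ruled out a priori), its stabilizer $\{x \in X : W+x=W\}$ is finite, so averaging $\overline{D}$ over it changes nothing and $W$ stays a non-abelian curve. The paper handles these cases by entirely different means: for a curve center $C$, \autoref{lem_deg_curve} (a consequence of Debarre's degree bound for curves in abelian varieties) shows that if $\langle C\rangle$ is $X$ or a surface, then either $(B.C)>3$ and the center can be cut to a point via \autoref{lem_cut_center_curve}, or $B-\langle C\rangle$ is already ample; and for a surface center, \autoref{lem_S_is_abelian} proves it must be an abelian surface with $B-S$ ample via a restricted-volume/convexity argument, not an averaging one.

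Even granting that $W$ is abelian --- say an elliptic curve $C$ with complementary $Q:=X/C$ --- the final combination step fails. For a sum of effective $\Q$-divisors pulled back from the two factors of a product, the multiplier ideal factors as a tensor product, $\calj(C\times Q,\, E_C\boxtimes\calo_Q + \calo_C\boxtimes E_Q) = \calj(C,E_C)\boxtimes\calj(Q,E_Q)$, whose cosupport is $(\{0\}\times Q)\cup(C\times\{0\})$; so $(0,0)$ is \emph{not} isolated in the non-klt locus, and the local Nadel-vanishing criterion cannot be applied. This is precisely the point made in \autoref{rem_connected component}: under the hypothesis $(B_t.C)>1$ alone one cannot cut the elliptic-curve center (that would require $(B_t.C)\geq 3$ by \autoref{lem_cut_center_curve}), and the ``single divisor with isolated Nklt at $0$'' route is blocked. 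The paper instead proves \autoref{prop_conj1_ques3} via \autoref{prop_induction}: it accepts a divisor $F$ whose non-klt locus has the \emph{abelian subvariety} $Y$ as an irreducible component, then works directly with the IT(0) property of $\cali_{f^{-1}(x)}\langle t f^*l\rangle$ under the Poincar\'e isogeny $f:Y\times Z\to X$, splitting it into $\cali_{Y\times W}$ (handled by applying local Nadel vanishing on the $Z$-fibers, via \autoref{prop_local_enough'}) and a skyscraper-supported piece (handled by the inductive bound $\beta(l|_Y)<t$). That sheaf-theoretic decomposition, rather than any global or product divisor with pointwise isolated Nklt, is the mechanism that makes the sharp bounds $27,4,1$ work.
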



We obtain the following corollary,
which answers \cite[Question 4.2]{MR3923760} and  \cite[Conjecture 7.1]{lozovanu2018singular} affirmatively in the three dimensional case:

\begin{cor}\label{cor_main}
Let $L$ be an ample line bundle on an abelian threefold $X$
and $p \geq 0$ be an integer.
Assume that $ (L^3) > 27(p+2)^3, (L^2.S) >4(p+2)^2, (L.C) > p+2$ for any abelian surface $S \subset X$ and any elliptic curve $C \subset X$.
Then 
$L$ satisfies $(N_p)$ and the Koszul cohomology group $K_{p,q}(X,L;dL) =0$ for any $q,d \geq 1$.
\end{cor}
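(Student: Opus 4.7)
The plan is to reduce \autoref{cor_main} to \autoref{main_thm} by a direct rescaling of the polarization, and then invoke Caucci's result. Let $l = [L] \in \mathrm{NS}(X)_\Q$ and set $t = 1/(p+2)$, so that $B_t = tl = l/(p+2)$. Then for any abelian subvariety $Z \subseteq X$ one has $(B_t^{\dim Z}.Z) = (L^{\dim Z}.Z)/(p+2)^{\dim Z}$, so the three numerical hypotheses of \autoref{cor_main} translate exactly into
\[
(B_t^3) > 27, \qquad (B_t^2.S) > 4, \qquad (B_t.C) > 1
\]
for every abelian surface $S \subseteq X$ and every elliptic curve $C \subseteq X$. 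Hence \autoref{main_thm} applies and yields $\beta(l) < t = 1/(p+2)$.

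Next, to convert the bound on $\beta(l)$ into the syzygy and Koszul statements, I would appeal to Caucci's theorem from \cite{MR4114062}. The statement recalled in the introduction gives that $\beta(l) < 1/(p+2)$ implies $L$ satisfies $(N_p)$; the Koszul cohomology vanishing $K_{p,q}(X,L;dL) = 0$ for all $q,d \geq 1$ is in fact the content of Caucci's proof (it is the precise cohomological statement he establishes, and $(N_p)$ is the $d=1$ specialization). So once $\beta(l) < 1/(p+2)$ has been secured, both conclusions of the corollary follow at once.

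In short, the argument is simply a scaling of the hypotheses of \autoref{main_thm} followed by Caucci's implication; no new computation is needed beyond \autoref{main_thm}. There is no real obstacle here, since all the work is contained in the main theorem — the corollary is a formal consequence. The only thing to double-check is that Caucci's result is stated in the paper in the strong Koszul form $K_{p,q}(X,L;dL) = 0$ for all $d \geq 1$, not only in the weaker form $(N_p)$; if the cited statement is only the latter, one should refer to the intermediate cohomological vanishing proved along the way in \cite{MR4114062} to obtain the full Koszul claim.
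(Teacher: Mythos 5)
Your scaling argument and the reduction to $\beta(l) < 1/(p+2)$ via \autoref{main_thm} is exactly the paper's route, and the deduction of $(N_p)$ from \autoref{thm_ Bpf_threshold} is the same. For the Koszul vanishing $K_{p,q}(X,L;dL) = 0$, you correctly flag the gap: the theorem as cited only yields $(N_p)$, not the twisted Koszul statement. The paper closes this gap with its own \autoref{prop_vanishing_K}, a variant of Caucci's Proposition~3.5, whose last clause says precisely that $\beta(l) < 1/(p+2)$ implies $K_{p,q}(X,L;dL) = 0$ for all $q,d \geq 1$; you could not have known about this internal proposition. Your suggested fallback, ``refer to the intermediate cohomological vanishing in \cite{MR4114062},'' is in the right spirit, but the precise reduction requires the specific choice $N = dL$, $N' = L$, $t = d^{-1}$ made in \autoref{prop_vanishing_K}, and your parenthetical claim that $(N_p)$ is the $d = 1$ specialization of $K_{p,q}(X,L;dL) = 0$ is not accurate: $(N_p)$ asks for vanishing of $K_{i,q}(X,\calo_X;L)$ for \emph{all} $0 \leq i \leq p$ and $q \geq 2$, which is not a specialization of the single-index family $K_{p,q}(X,L;dL)$. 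So the essential step you would still need to supply is the statement and (short) proof of \autoref{prop_vanishing_K}, or an equivalent deduction from the projection-formula/IT(0) machinery.
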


The basepoint freeness on abelian threefolds is studied in
\cite{MR1248757}, \cite{MR1679235}, \cite{MR1821241}, etc.
To the best of the author's  knowledge, 
the following criterion did not appear in the literature.

\begin{cor}\label{cor_ Bpf}
Let $L$ be an ample line bundle on an abelian threefold $X$ with $(L^3) \geq 30$,
or equivalently $h^0(L) \geq 5$.
Then $L$ is basepoint free if and only if 
$(L^2.S) \geq 6 $ and $ (L.C) \geq 2$ for any abelian surface $S \subset X$ and any elliptic curve $C \subset X$.
\end{cor}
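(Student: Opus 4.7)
The plan is to dispatch both directions using results already at our disposal. The sufficiency follows directly from \autoref{main_thm} applied with $t=1$; the necessity reduces to short one- and two-dimensional computations on restrictions of $L$ to abelian subvarieties.

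I would first record the equivalence $(L^3) \geq 30 \Longleftrightarrow h^0(L) \geq 5$: on an abelian threefold, Riemann--Roch gives $h^0(L) = \chi(L) = (L^3)/6$ for ample $L$, so $(L^3)$ is always a positive multiple of $6$. In particular the strict inequality $(L^3) > 27$ appearing in \autoref{main_thm} is equivalent to $(L^3) \geq 30$.

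For the ``if'' direction, I would simply plug the hypotheses into \autoref{main_thm} with $t=1$: the assumptions $(L^3) \geq 30$, $(L^2.S) \geq 6$ and $(L.C) \geq 2$ upgrade at once to $(B_1^3) > 27$, $(B_1^2.S) > 4$ and $(B_1.C) > 1$, hence $\beta(l) < 1$, and property~(1) recalled in the introduction gives basepoint freeness of $L$.

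For the ``only if'' direction, assume $L$ is ample and basepoint free. Restriction to any abelian subvariety $Y \subset X$ preserves both ampleness and basepoint freeness (a section of $L$ non-vanishing at $y \in Y$ restricts to a section of $L|_Y$ non-vanishing at $y$). For an elliptic curve $C$ this immediately gives $\deg(L|_C) = (L.C) \geq 2$. For an abelian surface $S$, $L|_S$ is ample and basepoint free, so $h^0(L|_S) \geq 3$: the alternatives $h^0(L|_S) = 1$ or $2$ are incompatible with ``ample and basepoint free'' on a surface, since $h^0 = 1$ forces a non-empty base locus while $h^0 = 2$ makes $\phi_{L|_S}$ factor through $\PP^1$ and forces $((L|_S)^2) = 0$, contradicting ampleness. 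Combined with $((L|_S)^2) = 2\chi(L|_S) = 2 h^0(L|_S)$, this yields $(L^2.S) \geq 6$.

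I do not foresee a serious obstacle; everything is essentially a repackaging of \autoref{main_thm} plus a little Riemann--Roch divisibility/parity, which is exactly what converts the strict inequalities of \autoref{main_thm} into the sharp integer bounds displayed in the corollary.
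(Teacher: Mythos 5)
Your proof is correct and follows essentially the same route as the paper: sufficiency by applying \autoref{main_thm} with $t=1$ and \autoref{thm_ Bpf_threshold}(1), necessity by restricting the basepoint free bundle $L$ to $S$ and $C$ and using Riemann--Roch on abelian surfaces and elliptic curves. The only difference is that you spell out why an ample basepoint-free line bundle on an abelian surface must have $h^0 \geq 3$ (ruling out $h^0 = 1$ via nonempty base locus and $h^0 = 2$ via $(L|_S^2)=0$ from a bpf pencil), whereas the paper simply asserts the bound; your filled-in detail is accurate.
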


\vspace{2mm}
This paper is organized as follows. 
In \autoref{sec_preliminary}, we prepare some notation and lemmas.
We also show two propositions on basepoint-freeness thresholds.
In \autoref{sec_by_seshadri_constant},
we see a relation between basepoint-freeness thresholds and Seshadri constants.
In particular, we give a partial answer to \autoref{ques2} in arbitrary dimensions
in \autoref{prop_bounds_any_dim}.
In \autoref{sec_surface},
we study basepoint-freeness thresholds on abelian surfaces.
In \autoref{sec_by_multiplier_ideal},
we give an easy but important observation.
In \autoref{section_induction},
we show \autoref{prop_conj1_ques3},
which reduces \autoref{ques2} to another question.
In \autoref{sec_curve_hypersurface},
we show two lemmas on curves and hypersurfaces in abelian varieties.
In \autoref{sec_proof},
we give an affirmative answer to the question  in \autoref{section_induction}  for $\dim X \leq 3$
and prove \autoref{main_thm}.

In the proof of \autoref{main_thm},
we do not need results in \autoref{sec_by_seshadri_constant}-\autoref{sec_by_multiplier_ideal} other than \autoref{prop_local_enough'}. 
Hence the readers may read only
\autoref{sec_preliminary}, \autoref{prop_local_enough'}, and \autoref{section_induction}-\autoref{sec_proof}
for the proof of \autoref{main_thm}.


\subsection*{Acknowledgments}
The author would like to express his gratitude to
Professor Yoshinori Gongyo for answering his questions about minimal lc centers.
He also thanks Professor Zhi Zhang for valuable comments. 
The author was supported by Grant-in-Aid 
for Scientific Research (17K14162).

\section{Preliminaries}\label{sec_preliminary}

\subsection{Basepoint-freeness threshold}
Let $X$ be an abelian variety of dimension $g$.
An \emph{abelian subvariety} $Y \subset X$ is a subvariety which is closed under the group low of $X$.
In particular, $o \in Y$ and $Y$ can be $\{o\}$ or $X$.
A \emph{polarization} $l$ of $X$ is an ample class in $ \Pic(X)/\Pic^0(X)$.
For $b \in \Z$, the multiplication-by-$b$ isogeny is denoted by
\[
\mu_b : X \rightarrow X , \quad p \mapsto bp.
\]
It is known that
$\mu_b^* l = b^2 l$ and $\deg \mu_b = b^{2g}$.
Frequently $L$ denotes an ample line bundle representing $l$.

In \cite{MR4157109}, Jiang and Pareschi introduce the \emph{cohomological rank function} as follows:

\begin{defn}[{\cite[Definition 2.1]{MR4157109}}] \label{def_coh_rank_function}
Let $(X,l)$ be a $g$-dimensional polarized abelian variety and $\calf \in \mathrm{D}^b(X)$ be a bounded complex of coherent sheaves on $X$.
\begin{enumerate}
\item For $i \in \Z$,
$h^i_{gen}(X,\calf) $ denotes 
the dimension of the hypercohomology $H^i(X, \calf \otimes P_{\alpha})$
for general $\alpha \in \widehat{X}=\Pic^0(X)$,
where $P_{\alpha} $ is the numerically trivial line bundle on $X$ corresponding to $\alpha$.
\item The \emph{cohomological rank function}
$h^i_{\calf,l} : \Q \rightarrow \Q_{\geq0}$
is defined as
\[
h^i_{\calf,l} (x) =h^i_{\calf}(xl) := b^{-2g} h^i_{gen}(X, \mu_b^* \calf \otimes L^{ab}) 
\]
for $x = \frac{a}{b} \in \Q, b > 0$ and an ample line bundle $L$ representing $l$.
\end{enumerate}
\end{defn}

\begin{defn}[{\cite[Section 8]{MR4157109}}]\label{def_ Bpf_threshold}
Let $(X,l)$ be a polarized abelian variety.
The \emph{basepoint-freeness threshold} $\beta(l)$, or \emph{Bpf threshold} for short, is defined by
\[
\beta(X,l)=\beta(l) :=\inf \{ x \in \Q \, | \,  h^1_{\cali_p}(xl) =0 \}
\]
for a closed point $p \in X$.
\end{defn}

We note that $h^i_{\calf,l} (x) $ does not depend on the choices of $L$ nor the representation $x = \frac{a}{b}$,
and $\beta(l)$ does not depend on the choice of $p \in X$.

\begin{thm}[{\cite[Theorem D, Corollary E]{MR4157109}, \cite[Theorem 1.1]{MR4114062}}]\label{thm_ Bpf_threshold}
Let $(X,l)$ be a polarized abelian variety and $p \geq 0$ be an integer. 
\begin{enumerate}
\item $0 < \beta(l)  \leq 1$, and $\beta(l)  < 1$ if and only if
any line bundle $L$ representing $l$ is basepoint free.
\item If $\beta(l)  < \frac1{p+2}$,
any line bundle $L$ representing $l$ satisfies property  $(N_p)$.
\end{enumerate}
\end{thm}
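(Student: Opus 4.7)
For part (1), the plan is to derive the bounds and the basepoint-freeness equivalence from the Pareschi--Popa theory of $M$-regularity and continuous global generation (CGG), combined with the dictionary translating vanishings of cohomological rank functions into generic vanishings after pullback by the isogeny $\mu_b$. For the upper bound $\beta(l)\le 1$, I take any rational $x=a/b>1$; then $\mu_b^{\ast}\mathcal{I}_p=\mathcal{I}_{Z_b}$, where $Z_b=\mu_b^{-1}(p)$ is a set of $b^{2g}$ reduced points, and $L^{ab}\otimes P_\alpha$ is an ample line bundle with enough sections to impose independent conditions on $Z_b$ for general $\alpha$. The sequence
\[
0\to \mathcal{I}_{Z_b}\otimes L^{ab}\otimes P_\alpha\to L^{ab}\otimes P_\alpha\to \mathcal{O}_{Z_b}\to 0
\]
then forces $H^1=0$, giving $h^1_{\mathcal{I}_p,l}(x)=0$. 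For the lower bound, evaluating at $x=0$ yields $h^1_{gen}(X,\mathcal{I}_p)=1$ directly from the short exact sequence $0\to \mathcal{I}_p\otimes P_\alpha\to P_\alpha\to \mathcal{O}_p\otimes P_\alpha\to 0$ with $\alpha$ a general nontrivial point, and the continuity and piecewise rationality of cohomological rank functions established in \cite{MR4157109} show that the same positivity persists on a small interval $[0,\varepsilon)$.

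The heart of (1) is the equivalence with basepoint-freeness. Assume $\beta(l)<1$ and pick rational $x=a/b<1$ with $h^1_{gen}(X,\mathcal{I}_{Z_b}\otimes L^{ab})=0$; tensoring with a further general ample twist $L^{b^2-ab}\otimes P_\gamma$ upgrades this generic vanishing to $M$-regularity of $\mathcal{I}_{Z_b}\otimes L^{b^2}$. The Pareschi--Popa theorem that $M$-regular sheaves are CGG, applied to a preimage of $p$ lying in $Z_b$, then produces enough sections of suitable twists to separate that preimage, and descending along $\mu_b$ yields basepoint-freeness of $L$ itself. Conversely, if $L$ is basepoint free then $\mathcal{I}_p\otimes L$ is globally generated after twist by $P_\alpha$ for general $\alpha$, and a multiplication-of-sections argument in the spirit of Pareschi--Popa produces a generic vanishing placing $\beta(l)$ strictly below~$1$.

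For part (2), the approach is Caucci's. Property $(N_p)$ is equivalent, via Lazarsfeld's kernel bundle $M_L=\ker(H^0(L)\otimes \mathcal{O}_X\twoheadrightarrow L)$, to the vanishings $H^1(X,\wedge^{p+1}M_L\otimes L^{j})=0$ for all $j\ge 1$. Following the Lazarsfeld--Pareschi--Popa viewpoint, one rewrites these via the Koszul resolution of the diagonal $\Delta\subset X\times X$, translating the question into a generic vanishing statement for a suitable Fourier--Mukai transform of $\mathcal{I}_{\Delta}^{\,p+1}\otimes (L\boxtimes L)^{\otimes k}$. A careful accounting then identifies $\beta(l)<1/(p+2)$ as exactly the condition which, combined with Pareschi--Popa CGG, forces the required vanishing.

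The main obstacle in both parts is the descent step: the cohomological vanishing supplied by $\beta(l)<t$ lives most naturally on an isogenous cover, whereas the conclusion is a statement about $L$ on $X$ itself. The chief technical input is the clean translation via Fourier--Mukai and CGG. For higher syzygies, the additional difficulty is keeping track of the exterior power $\wedge^{p+1}M_L$ together with the diagonal in $X\times X$, which is precisely where Caucci's extension of the Jiang--Pareschi argument is essential.
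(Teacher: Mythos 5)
This theorem is not proved in the paper at all: it is imported verbatim from Jiang--Pareschi \cite[Theorem D, Corollary E]{MR4157109} and Caucci \cite[Theorem 1.1]{MR4114062}, and the author merely cites it. So there is no ``paper's own proof'' to compare against. What the paper does contain that is close in spirit is \autoref{lem_beta_IT(0)} (the equivalence $\beta(l)<x\iff\cali_p\langle xl\rangle$ is IT(0), again cited) and the proof of \autoref{prop_vanishing_K}, which reproduces Caucci's syzygy argument in a slightly generalized form. With that in mind, a few remarks on your reconstruction.

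For part (1), your route through $M$-regularity and CGG is in the right ballpark, but two steps are under-justified. First, in the upper bound you assert that $L^{ab}\otimes P_\alpha$ ``has enough sections to impose independent conditions on $Z_b$''; a dimension count shows there \emph{could} be enough sections, but imposing independent conditions on $b^{2g}$ points is exactly what needs to be proved, not assumed. Second, your descent along $\mu_b$ (``descending along $\mu_b$ yields basepoint-freeness of $L$ itself'') does not work as stated: basepoint-freeness of $\mu_b^*L\cong L^{b^2}$ (up to $\Pic^0$) does not in general descend to $L$. The clean logic, which is what \autoref{lem_beta_IT(0)} encodes, is: $\beta(l)<1$ iff $\cali_p\langle l\rangle=\cali_p\otimes L$ is IT(0) (one direction uses that IT(0) is stable under adding an ample $\Q$-class; the other is the content of Jiang--Pareschi's theorem on cohomological rank functions), and $\cali_p\otimes L$ IT(0) is equivalent to basepoint-freeness of $L$ via the long exact sequence of $0\to\cali_p\otimes L\to L\to L|_p\to 0$ twisted by $P_\alpha$, together with translation-homogeneity of $L\otimes P_\alpha$. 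The whole difficulty of the Jiang--Pareschi theorem is hidden in establishing this dictionary, which your sketch passes over by appealing to ``a multiplication-of-sections argument in the spirit of Pareschi--Popa.''

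For part (2), your description (Koszul resolution of the diagonal $\Delta\subset X\times X$, Fourier--Mukai transform of $\cali_\Delta^{p+1}\otimes(L\boxtimes L)^{\otimes k}$) is closer to the older Pareschi and Lazarsfeld--Pareschi--Popa strategy than to Caucci's actual proof, which is the one cited here. Caucci's argument, reflected faithfully in \autoref{prop_vanishing_K}, works directly with the kernel bundle $M_L$ as a $\Q$-twisted sheaf: one uses \cite[Theorem D]{MR4157109} to compute the threshold $s(l)=\beta(l)/(1-\beta(l))$ at which $M_L\langle xl\rangle$ becomes IT(0), rewrites $M_L^{\otimes p+1}\otimes L^{q}$ as a tensor product of $\Q$-twisted sheaves of which $p+1$ factors are IT(0) and one is GV, and applies \cite[Proposition 3.4]{MR4114062} on preservation of IT(0) under such tensor products. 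If you want a reconstruction aligned with the cited sources, that $\Q$-twisted tensor calculus, rather than a diagonal/Fourier--Mukai reduction, is the argument you should be writing down.
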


For a coherent sheaf $\calf$ on $X$ and $x \in \Q$,
a \emph{$\Q$-twisted coherent sheaf} $\calf \langle xl \rangle$ is the equivalence class of the pair $(\calf,xl)$,
where the equivalence is defined by
\[
(\calf \otimes L^{m} , xl) \sim (\calf  , (x+m)l)
\]
for any line bundle $L$ representing $l$ and any $m \in \Z$.
In \cite{MR4157109}, 
the usual notions of generic vanishing are extended to the $\Q$-twisted setting.
A coherent sheaf $\calf$ on $X$ is said to be \emph{IT(0)} if $h^i(X,\calf \otimes P_{\alpha}) =0$ for any $i >0$ and any $\alpha \in \widehat{X}$.
A $\Q$-twisted coherent sheaf $\calf \langle xl \rangle$ for $x=\frac{a}{b}$ is said to be  \emph{IT(0)}
if so is $\mu_b^* \calf \otimes L^{ab}$.
We do not state the usual definition of the notion \emph{GV} here,
but 
we can define 
$\calf \langle xl \rangle$ to be \emph{GV} if
$\calf \langle (x+x')l \rangle$ is IT(0) for any rational number $x' >0$ by \cite[Theorem 5.2]{MR4157109}.

For a line bundle $ N$ on $X$,
$N  \langle xl \rangle$ is IT(0) if and only if 
$N  \langle xl \rangle$ is ample,
i.e.\ $N+xl$ is ample.
For the non-twisted case,
this is shown by
\cite[Example 3.10 (1)]{MR2435838}.
The $\Q$-twisted case follows from the non-twisted case.
Hence $N  \langle xl \rangle$ is GV if and only if 
$N  \langle xl \rangle$ is nef,
i.e.\ $N+xl$ is nef.

These notions are closely related to
$\beta(l)$ as follows:

\begin{lem}[{\cite[Section 8]{MR4157109},\cite[Lemma 3.3]{MR4114062}}]\label{lem_beta_IT(0)}
Let $(X,l)$ be a polarized abelian variety and $x \in \Q$.
Then $\beta(l) < x$ if and only if $\cali_p \langle xl \rangle$ is IT(0) for some (and hence for any) point $p \in X$.
\end{lem}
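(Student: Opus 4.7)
The plan is to compare two subsets of $\Q$, namely $A := \{x : h^1_{\cali_p}(xl) = 0\}$, whose infimum is $\beta(l)$, and $B := \{x : \cali_p\langle xl\rangle \text{ is IT(0)}\}$, and to show that $B = (\beta(l), \infty) \cap \Q$. The inclusion $B \subseteq A$ is immediate, since IT(0) demands vanishing of $h^1$ for every $\alpha \in \widehat X$ and in particular for generic $\alpha$, which is what $h^1_{\cali_p}(xl)$ computes.

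The key preliminary observation does most of the work: for any rational $x' > 0$, the short exact sequence $0 \to \cali_p \to \sO_X \to \sO_p \to 0$, combined with the IT(0) property of $\sO_X\langle x' l\rangle$ (the line-bundle case recalled in the excerpt, using that $x' l$ is ample), forces $h^i_{\cali_p}(x' l) = 0$ for all $i \geq 2$ automatically. So the single condition $h^1_{\cali_p}(x' l) = 0$ already yields that $\cali_p\langle x' l\rangle$ is GV.

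For the $(\Rightarrow)$ direction I would assume $\beta(l) < x$ and pick a rational $x_1 \in A$ with $\beta(l) \leq x_1 < x$, which exists by the very definition of infimum; note $x_1 > 0$ thanks to \autoref{thm_ Bpf_threshold}(1). The preliminary observation then gives that $\cali_p\langle x_1 l\rangle$ is GV, and applying the GV-to-IT(0) principle recalled in the excerpt (\cite[Theorem 5.2]{MR4157109}) with the positive rational shift $x - x_1$ yields IT(0) of $\cali_p\langle xl\rangle$.

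For the $(\Leftarrow)$ direction, IT(0) of $\cali_p\langle xl\rangle$ gives $h^1_{\cali_p}(xl) = 0$ on taking generic $\alpha$, hence $\beta(l) \leq x$. The hard part will be upgrading this to strict inequality; the plan is to invoke the continuity of the cohomological rank function proved by Jiang--Pareschi, which extends $h^1_{\cali_p}(\cdot\, l)$ to a continuous non-negative function on $\R$. Its vanishing locus is then open in $\R$, so it contains a rational $y$ with $y < x$, yielding $\beta(l) \leq y < x$. The independence of the base point $p$ finally follows because any two ideal sheaves $\cali_p, \cali_q$ are translates of one another on $X$, and the relevant cohomology groups are preserved by translation after an appropriate adjustment of the twisting character $P_\alpha$, so the IT(0) property is insensitive to $p$.
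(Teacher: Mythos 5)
The paper does not prove this lemma itself --- it simply cites \cite[Section 8]{MR4157109} and \cite[Lemma 3.3]{MR4114062} --- so the comparison is against what those sources do. Your preliminary observation is correct and is in fact the heart of the matter: from $0 \to \cali_p \to \calo_X \to \calo_p \to 0$, the ampleness of $L^{ab}$ kills $h^i(\mu_b^*\cali_p \otimes L^{ab} \otimes P_\alpha)$ for all $i \geq 2$ and \emph{all} $\alpha$, so the cohomological support loci $V^i$ vanish identically for $i\geq 2$ and the sole condition $h^1_{\cali_p}(x'l)=0$ forces the full GV condition. With Theorem 5.2 of \cite{MR4157109} (the equivalence between the support-locus description of GV and the ``IT(0) after any positive shift'' characterization that the paper records), the $(\Rightarrow)$ direction and the $B\subseteq A$ inclusion are both fine, and so is the translation argument for independence of $p$.

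The gap is in the $(\Leftarrow)$ direction, specifically in the step upgrading $\beta(l)\leq x$ to $\beta(l)<x$. You invoke continuity of the extended rank function $h^1_{\cali_p}(\cdot\,l)$ and claim ``its vanishing locus is then open in $\R$,'' but the zero set of a continuous nonnegative function is \emph{closed}, not open; continuity alone actually tells you that the vanishing locus is $[\beta(l),\infty)$ and therefore only that $\beta(l)\leq x$. What is genuinely needed here is that IT(0) is an \emph{open} condition in the twisting parameter: if $\cali_p\langle xl\rangle$ is IT(0), then $\cali_p\langle yl\rangle$ is GV for all rational $y$ in some interval $(x-\epsilon,x)$, whence $h^1_{\cali_p}(yl)=0$ and $\beta(l)\leq y<x$. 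This openness is nontrivial; it comes from the Fourier--Mukai/M-regularity description of IT(0) sheaves (the transform $R^0\widehat{S}$ is locally free, and ampleness on $\widehat X$ is open) and is part of the content of \cite[Theorem 5.2]{MR4157109}. Replacing the (incorrect) continuity argument by an explicit appeal to that openness statement would close the gap.
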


\begin{rem}\label{rem_curve}
If $X$ is an elliptic curve,
it is easy to check that $h^1_{\cali_p}(xl) =0 $ if and only if $ x \deg(l) -1 \geq 0$ by the Riemann-Roch theorem.
Hence we have $\beta(l) = \deg(l)^{-1}$.
This follows from  \autoref{lem_beta_IT(0)} as well.
\end{rem}

For a basepoint free ample line bundle $N$ and a line bundle $N'$ on a projective variety $X$,
the \emph{Koszul cohomology group} $K_{p,q}(X,N';N) $ is defined to be the cohomology of the Koszul-type
complex
\[
\bigwedge^{p+1} H^0(N) \otimes H^0(N'\otimes N^{q-1}) \rightarrow \bigwedge^{p} H^0(N) \otimes H^0(N' \otimes N^q)
 \rightarrow  \bigwedge^{p-1} H^0(N) \otimes H^0(N' \otimes N^{q+1})
\]
for $p,q \geq 0$.
For $p\geq 0$,
it is known that
$N$ satisfies  $(N_p)$ if and only if 
$K_{i,q}(X,\calo_X;N)=0 $ for any $ 0 \leq i \leq p$ and $q \geq 2$.

Let $M_N$ be the kernel of the surjective map $H^0(X,N) \otimes \calo_X \rightarrow N$.
By \cite[Section 1]{MR1193597},
the vanishing $K_{p,q}(X,N';N) =0$ follows from
$h^1( X, \bigwedge^{p+1} M_{N} \otimes N^{q-1} \otimes N')=0 $.
The following proposition is a variant of  \cite[Proposition 3.5]{MR4114062}.

\begin{prop}\label{prop_vanishing_K}
Let $X$ be an abelian variety 
and $N, N'$ be line bundles on $X$ such that $N$ is ample and basepoint free.
Let $n \in \Pic X/ \Pic^0(X)$ be the class of $N$ and
$p ,q $ be nonnegative integers.
Assume that that there exists a rational number $t>0$ such that $(q-1-t)N+N'$ is nef and $\beta(n)  < \frac{t}{p+1+t}   $.
Then $h^1( X, \bigwedge^{p+1} M_{N} \otimes N^{q-1} \otimes N')=0 $ 
and hence $K_{p,q}(X,N';N) =0$. 

In particular,
if $l$ is a polarization of $X$ and
$\beta(l)  < \frac{1}{p+2} $ for $p \geq 0$,
then $L$ is projectively normal and 
$K_{p,q}(X,L;dL) =0$ for any $q, d \geq 1$ and any $L$ representing $l$.
\end{prop}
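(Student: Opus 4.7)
The strategy is to show that under the hypothesis $\beta(n) < t/(p+1+t)$, the $\Q$-twisted coherent sheaf $\bigwedge^{p+1} M_N \langle tn \rangle$ is IT(0). Granting this, a small perturbation argument — choosing $x$ slightly less than $t$ so that $\bigwedge^{p+1} M_N \langle xn \rangle$ remains IT(0), while $(q-1-x)N + N' = \bigl((q-1-t)N+N'\bigr) + (t-x)N$ becomes strictly ample (nef plus ample) — combined with the Pareschi-Popa principle that the tensor of a GV-sheaf with an ample line bundle is IT(0), yields that $\bigwedge^{p+1} M_N \otimes N^{q-1} \otimes N'$ is IT(0). In particular $H^1$ vanishes, and the Koszul cohomology vanishing $K_{p,q}(X,N';N)=0$ follows by the standard computation of $K_{p,q}$ via exterior powers of $M_N$ (cf.~\cite{MR1193597}).

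For the key $\Q$-twisted IT(0) claim, I would induct on $p \geq 0$, starting from the short exact sequence
$$0 \to \bigwedge^{p+1} M_N \to \bigwedge^{p+1} H^0(N) \otimes \mathcal{O}_X \to \bigwedge^p M_N \otimes N \to 0,$$
obtained by wedging the defining sequence of $M_N$. After twisting by $tn$, the middle term is IT(0) because $t>0$ makes $\mathcal{O}_X \langle tn \rangle$ ample, while the right term becomes $\bigwedge^p M_N \langle (t+1) n \rangle$, which by the inductive hypothesis is IT(0) as soon as $\beta(n) < (t+1)/(p+1+t)$ — a weaker demand than our assumed $\beta(n) < t/(p+1+t)$, so it is automatic. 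The vanishings $H^i\bigl(\bigwedge^{p+1} M_N \langle tn\rangle \otimes P_\alpha\bigr) = 0$ for $i \geq 2$ and all $\alpha \in \widehat{X}$ then follow immediately from the long exact sequence. The essential remaining step, $i = 1$, reduces to the surjectivity of the evaluation-type map
$$H^0\bigl(X, \bigwedge\nolimits^{p+1} H^0(N) \otimes L^t \otimes P_\alpha\bigr) \;\to\; H^0\bigl(X, \bigwedge\nolimits^p M_N \langle (t+1) n \rangle \otimes P_\alpha\bigr),$$
which one obtains inside the Pareschi-Popa framework as continuous global generation of the right-hand sheaf, adapted to the $\Q$-twisted setting via the isogenies $\mu_b$. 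The base case $p=0$ requires $M_N \langle tn \rangle$ IT(0) under $\beta(n) < t/(1+t)$; through the defining sequence of $M_N$ this reduces to IT(0)-ness of $\mathcal{I}_o \langle \tfrac{t}{1+t}\, n \rangle$, equivalent to $\beta(n) < t/(1+t)$ by \autoref{lem_beta_IT(0)}.

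The ``in particular'' assertion then follows by direct application. Projective normality of $L$ under $\beta(l) < 1/(p+2)$ is already \autoref{thm_ Bpf_threshold}(2) applied with $p=0$, since $1/(p+2) \leq 1/2$ for $p \geq 0$. For the Koszul vanishing $K_{p,q}(X,L;dL)=0$ with $q,d \geq 1$, apply the main part with $N=L$, $N'=dL$, and $t=1$: then $(q-1-t)N + N' = (q+d-2)L$ is nef because $q+d \geq 2$, and the assumed bound $\beta(l) < 1/(p+2) = t/(p+1+t)$ is exactly what is required. The main obstacle throughout is the $i=1$ surjectivity in the inductive step, which demands the continuous global generation machinery of Pareschi-Popa adapted to $\Q$-twists via the $\mu_b$; the precise tracking of numerical invariants at that step is what forces the denominator $p+1+t$ in the hypothesis of the proposition.
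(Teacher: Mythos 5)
Your inductive argument for the claim that $\bigwedge^{p+1}M_N\langle tn\rangle$ is IT(0) is circular at the step $i=1$. From the twisted long exact sequence of
\[
0 \to \bigwedge^{p+1} M_N \to \bigwedge^{p+1} H^0(N) \otimes \calo_X \to \bigwedge^p M_N \otimes N \to 0,
\]
with the middle term trivially IT(0) after twisting, the surjectivity of the resulting map onto $H^0\bigl(\bigwedge^p M_N\langle (t+1)n\rangle \otimes P_\alpha\bigr)$ is \emph{equivalent} to $H^1\bigl(\bigwedge^{p+1}M_N\langle tn\rangle \otimes P_\alpha\bigr)=0$, which is exactly the vanishing you are trying to establish. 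The inductive hypothesis gives IT(0) of the cokernel sheaf, but this alone does not force the needed surjectivity on global sections; and continuous global generation of the cokernel concerns evaluation maps $\bigoplus_{\alpha} H^0(\calf\otimes P_\alpha)\otimes P_\alpha^{-1}\to\calf$, which is not the map at issue. The paper sidesteps this entirely by working with the tensor power $M_N^{\otimes(p+1)}$ rather than the exterior power: the formula $s(n)=\beta(n)/(1-\beta(n))$ of \cite[Theorem D]{MR4157109} turns $\beta(n)<t/(p+1+t)$ directly into $s(n)<t/(p+1)$, so $M_N\langle\tfrac{t}{p+1}n\rangle$ is IT(0); then $M_N^{\otimes(p+1)}\otimes N^{q-1}\otimes N'$ is rewritten as $\bigl(M_N\langle\tfrac{t}{p+1}n\rangle\bigr)^{\otimes(p+1)}\otimes(N^{q-1}\otimes N')\langle -tn\rangle$, the last factor is GV by the nefness hypothesis, \cite[Proposition 3.4]{MR4114062} gives IT(0) of the full tensor product, and $\bigwedge^{p+1}M_N$ is a direct summand of $M_N^{\otimes(p+1)}$ in characteristic zero.

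Separately, your ``in particular'' step interchanges the roles of $N$ and $N'$: in $K_{p,q}(X,L;dL)$ the bundle being wedged is $N=dL$ and the twist is $N'=L$, so your choice $N=L$, $N'=dL$, $t=1$ proves $K_{p,q}(X,dL;L)=0$ rather than the stated vanishing. The correct substitution is $N=dL$, $N'=L$, $t=d^{-1}$; then $\beta(n)=d^{-1}\beta(l)<d^{-1}/(p+2)\leq d^{-1}/(p+1+d^{-1})$ and $(q-1-d^{-1})N+N'=(q-1)dL$ is nef for $q\geq1$.
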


\begin{proof}
Set $ s(n) =\inf \{ x \in \Q \, | \, h^1_{M_N}(xl) =0 \}$ as in \cite{MR4157109}.
By \cite[Theorem D]{MR4157109} and $\beta(n)  < \frac{t}{p+1+t}   $, we have
\[
s(n) =\frac{\beta(n)}{1-\beta(n)} < \frac{t}{p+1}
\]
and hence 
$M_{N} \langle  \frac{t}{p+1} n \rangle$ is IT(0) by \cite[Section 8]{MR4157109}, \cite[Lemma 3.3]{MR4114062}.
As a $\Q$-twisted sheaf,
$M_{N}^{\otimes p+1} \otimes N^{q-1} \otimes N'$ is written as
\[
\left(M_{N} \left\langle  \frac{t}{ p+1} n \right\rangle \right)^{\otimes p+1} \otimes ( N^{q-1}  \otimes N') \langle - t n \rangle.
\]
Since $(q-1-t)N+N'$ is nef by assumption,  
$(N^{q-1} \otimes N') \langle - t n \rangle$ is GV. 
Thus $M_{N}^{\otimes p+1} \otimes N^{q-1} \otimes N'$ is IT(0) by \cite[Proposition 3.4]{MR4114062}, 
and hence $h^1(X,M_{N}^{\otimes p+1} \otimes N^{q-1} \otimes N' )=0$.
Since $ \bigwedge^{p+1} M_{N} \otimes N^{q-1} \otimes N' $ is a direct summand of $M_{N}^{\otimes p+1} \otimes N^{q-1} \otimes N'$,
we have $h^1( X, \bigwedge^{p+1} M_{N} \otimes N^{q-1} \otimes N')=0 $ and hence $K_{p,q}(X,N';N) =0$.

For the last statement, the projective normality of $L$ follows from $\beta(l) < 1/(p+2) \leq 1/2$ 
and \autoref{thm_ Bpf_threshold} (2).
Let $N=dL, N'=L$ and $t=d^{-1}$ for $d \geq 1$.
Then
\[
\beta(n)  =\beta (dl) = d^{-1}\beta (l)  < d^{-1} \frac{1}{p+2} \leq \frac{d^{-1}}{p+1+d^{-1}}  
\]
for any $d \geq 1$.
Since $(q-1 -d^{-1}) N +N'=(q-1)N$ is nef for $q \geq 1$ and $\beta(n)   <  \frac{d^{-1}}{p+1+d^{-1}} $,
$K_{p,q}(X,L;dL) =0$ follows.
\end{proof}

\begin{rem}
By \autoref{prop_vanishing_K},
we can generalize 
\cite[Theorems 2.4, 2.9]{MR3483065}
as follows:

Let $N, N'$ be line bundles on $X$ such that $N$ is ample and basepoint free
as in \autoref{prop_vanishing_K}.
Assume that $g=\dim X \geq 2$, $N-N'$ is ample
and set $b=\min\{ s \in \R \, | \,  sN-N' \text{ is nef}  \} <1$.
Then $K_{p,1}(X,N';N)=0$
for 
\begin{align}\label{eq_range_p}
r - b -\frac{g-1-b}{\beta(n)}  < p \leq r-g,
\end{align}
where $r=h^0(N)-1$.

In fact, by Serre duality and the Kodaira vanishing theorem, we have
  \begin{equation}\label{eq_h^{g-1}}
\begin{aligned}
h^1( \wedge^{p+1} M_{N} \otimes N') &=h^{g-1}( \wedge^{r-p-1} M_{N} \otimes N  \otimes N'^{-1}) \\
&=h^{1}( \wedge^{r-p-g+1} M_{N} \otimes N^{g-1}  \otimes N'^{-1}) 
\end{aligned}
  \end{equation}
as in the proof of \cite[Lemma 2.1]{MR3483065}.
Here we use $p \leq r-g$ and the ampleness of $N-N'$.

Set $p'=r-p-g\geq 0$.
By \ref{eq_h^{g-1}}, $K_{p,1}(X,N';N)=0$ follows from $ h^{1}( \wedge^{p'+1} M_{N} \otimes N^{g-1}  \otimes N'^{-1})=0$.
Hence it suffices to find $t>0$ such that $(g-1-t)N-N'$ is nef and $\beta(n) < \frac{t}{p'+1+t}$ by \autoref{prop_vanishing_K}. 

Note that $g-1-b$ is positive by $g \geq 2$ and $b <1$.
We show that a rational $t>0$ with   $ 0< (g-1-b) - t \ll 1 $ satisfies these conditions.

By the definition of $b$ and $g-1-t > b$, $(g-1-t)N -N'$ is ample.
On the other hand, the first inequality of \ref{eq_range_p} is equivalent to $ \beta(n) < \frac{g-1-b}{r-b-p} =\frac{g-1-b}{p'+1+(g-1-b)}$.
Since $t$ is sufficiently close to $g-1-b$, we have $\beta(n) < \frac{t}{p'+1+t}$.
Hence we obtain  the vanishing $K_{p,1}(X,N';N)=0$.
\end{rem}

The following lemma is well-known to experts, but we give a proof for completeness.

\begin{lem}\label{lem_IT(0)}
Let $(X,l)$ be a polarized abelian variety
and $\calf$ be a coherent sheaf on $X$.
Let $x \in \Q$.
\begin{enumerate}
\item Let $0 \rightarrow \calf_1 \rightarrow \calf \rightarrow \calf_2 \rightarrow 0$ be an exact sequence of coherent sheaves on $X$.
If $\calf_1 \langle xl \rangle  $ and $\calf_2 \langle xl \rangle$ are IT(0), so is $\calf \langle xl \rangle$.
\item Let $(X',l')$ be a polarized abelian variety and $\calf'$ be a coherent sheaf on $X'$.
If $\calf \langle xl \rangle $ and $\calf' \langle xl' \rangle$ are IT(0),
so is $ \calf \boxtimes \calf' \langle x (l \boxtimes l') \rangle$,
where $\calf \boxtimes \calf' $ (resp.\ $l \boxtimes l' $) is the tensor product of pullbacks of $\calf $ and $\calf'$ (resp.\ $l$ and $l'$) on $X \times X'$.
\item Let $f : X' \rightarrow X$ be an isogeny.
Then $\calf \langle xl \rangle$ is IT(0) if and only if  so is $f^* \calf \langle x f^* l \rangle$.
\item Let $X' \subset X$ be an abelian subvariety and $\calf'$ be a coherent sheaf on $X'$. 
Then $\calf' \langle xl|_{X'} \rangle$ is IT(0) on $X'$ if and only if so is $ \iota_* \calf' \langle x l\rangle$ on $X$,
where $\iota :X' \rightarrow X$ is the inclusion morphism.
\end{enumerate}
\end{lem}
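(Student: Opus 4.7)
The plan is to unwind the definition of IT(0) for $\Q$-twisted sheaves: for $x = a/b$, IT(0) of $\calg\langle xl\rangle$ means $\mu_b^*\calg \otimes L^{ab}$ satisfies $H^i(\,\cdot\, \otimes P_\alpha) = 0$ for all $i > 0$ and all $\alpha \in \Pic^0$. The four parts then follow by verifying that the operation in question commutes appropriately with $\mu_b^*$ and interacts well with cohomology and $\Pic^0$-twists.

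Parts (1) and (2) are essentially formal. For (1), $\mu_b^*$ is exact (by flatness) and tensoring with $L^{ab} \otimes P_\alpha$ is exact, so the given short exact sequence produces a short exact sequence of IT(0)-candidates; the long exact sequence of cohomology yields the vanishing for the middle term from the outer two. For (2), multiplication-by-$b$ on $X \times X'$ is the product of the two $\mu_b$'s, $\widehat{X \times X'} \cong \hat X \times \hat{X'}$, and $(L \boxtimes L')^{ab} = L^{ab} \boxtimes L'^{ab}$, so the K\"unneth formula factors the relevant cohomology as a direct sum of tensor products $H^p(X, \cdot) \otimes H^q(X', \cdot)$ over $p + q = k$, each of which vanishes for $k > 0$ by the two IT(0) hypotheses.

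For (3), the relation $\mu_b^X \circ f = f \circ \mu_b^{X'}$ (since $f$ is a homomorphism) gives $\mu_b^{X',*}f^*\calf \otimes (f^*L)^{ab} = f^*(\mu_b^{X,*}\calf \otimes L^{ab})$, reducing the claim to the untwisted statement that $\calg$ is IT(0) on $X$ iff $f^*\calg$ is IT(0) on $X'$ for an isogeny $f$. Both directions then follow from the projection formula $H^i(X', f^*\calg \otimes N) = H^i(X, \calg \otimes f_*N)$ and the standard decomposition $f_*\calo_{X'} = \bigoplus_{\alpha' \in \ker \hat f} P_{\alpha'}$: the forward direction uses that every $P_\beta$ on $X'$ is $f^*P_\alpha$ for some $\alpha \in \hat X$ (since $\hat f$ is surjective), so $f_*P_\beta$ is a direct sum of line bundles in $\Pic^0(X)$; the converse uses that $H^i(X, \calg \otimes P_\alpha)$ appears as a direct summand of $H^i(X', f^*\calg \otimes f^*P_\alpha)$.

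Part (4) is the main obstacle because $\mu_b^{X,*}\iota_*\calf'$ is supported on $(\mu_b^X)^{-1}(X')$, which is strictly larger than $X'$ in general. For integer $x = m$, the projection formula $\iota_*\calf' \otimes L^m = \iota_*(\calf' \otimes \iota^*L^m)$ together with the surjectivity of the restriction $\iota^*: \hat X \to \hat{X'}$ immediately identifies the two IT(0) conditions. For rational $x = a/b$, apply flat base change to the cartesian square formed by $\iota$ and $\mu_b^X$, obtaining $\mu_b^{X,*}\iota_*\calf' = j_*\pi^*\calf'$ with $j: Y \hookrightarrow X$, $\pi = \mu_b^X|_Y: Y \to X'$, and $Y = (\mu_b^X)^{-1}(X')$. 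The subvariety $Y$ is a disjoint union of finitely many translates $\tau + X'$, indexed by coset representatives $\tau$ of $(X/X')[b]$ in $X$. Identifying each translate with $X'$ via $T_\tau$ and using $\mu_b \circ T_\tau = T_{b\tau} \circ \mu_b$, the cohomology $H^i(X, \mu_b^{X,*}\iota_*\calf' \otimes L^{ab} \otimes P_\beta)$ splits as a direct sum indexed by $\tau$ of terms of the form $H^i(X', \mu_b^{X',*}T_{b\tau}^*\calf' \otimes (\iota^*L)^{ab} \otimes P_{\gamma(\tau, \beta)})$ for appropriate $\gamma(\tau, \beta) \in \hat{X'}$. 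Since $\iota^*: \hat X \to \hat{X'}$ is surjective and IT(0) is invariant under translation, vanishing of each such summand for all $\beta$ is equivalent to IT(0) of $\mu_b^{X',*}\calf' \otimes (\iota^*L)^{ab}$, which is exactly IT(0) of $\calf'\langle x\iota^*l\rangle$ on $X'$.
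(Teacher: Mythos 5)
Your proposal is correct and follows essentially the same route as the paper for all four parts: the long exact sequence argument for (1), the K\"unneth decomposition for (2), the reduction via $\mu_b \circ f = f \circ \mu'_b$ plus the decomposition of $f_*\calo_{X'}$ for (3), and the decomposition of $\mu_b^{-1}(X')$ into translates of $X'$ for (4). The only cosmetic differences are in part (4): you invoke flat base change on the Cartesian square to identify $\mu_b^*\iota_*\calf'$, where the paper computes the splitting directly, and you use arbitrary coset representatives $\tau$ with $b\tau \in X'$ (absorbing the resulting $T_{b\tau}^*$ by translation-invariance of IT(0)), whereas the paper chooses representatives $p_j \in \mu_b^{-1}(o) = X[b]$ so that the composition $\mu_b|_{X'_j} \circ t_j$ is exactly $\mu'_b$ with no residual translation of $\calf'$; both are legitimate and amount to the same decomposition.
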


\begin{proof}
Let $x =\frac{a}{b}$ and $\mu'_b$ be the multiplication-by-$b$ isogeny on $X'$.\\
(1) For $\alpha \in \widehat{X}$,
we have $h^i(\mu_b^* \calf_j \otimes L^{ab} \otimes P_{\alpha}) =0$ for $i >0$ if $\calf_j \langle xl \rangle$ is IT(0) for $j=1,2$.
Tensoring $L^{ab} \otimes P_{\alpha}$ with the pullback of the exact sequence in (1) by $\mu_b$,
it holds that  $h^i(\mu_b^* \calf \otimes L^{ab} \otimes P_{\alpha}) =0$ for $i >0$.
Hence $\calf \langle xl \rangle$ is IT(0).\\[1mm]
(2) 
Let $(\alpha,\alpha')  \in \widehat{X} \times \widehat{X}' =\widehat{X \times X'}$.
By the K\"unneth formula,
\[
H^i(X \times X' , (\mu_b \times \mu'_b)^* (\calf \boxtimes \calf') \otimes (L \boxtimes L')^{ab} \otimes (P_{\alpha} \boxtimes P_{\alpha'})) 
\]
is the direct sum of 
\begin{align}\label{eq_H^k}
 H^k( X, \mu_b^* \calf  \otimes L^{ab} \otimes  P_{\alpha} ) \otimes H^{i-k}( X', {\mu'_b}^* \calf'  \otimes L'^{ab} \otimes   P_{\alpha'} ) 
\end{align}
for $0 \leq k \leq i$.
If $i >0$,
\ref{eq_H^k} is zero for any $0 \leq k \leq i$ 
since $\calf \langle xl \rangle $, $\calf' \langle xl' \rangle$ are IT(0).
Hence $ \calf \boxtimes \calf' \langle x (l \boxtimes l') \rangle$ is IT(0) as well.\\[1mm]
(3) For $\alpha \in \widehat{X}$, it holds that
\begin{align*}
H^i(X',  {\mu'_b}^* f^* \calf'  \otimes  (f^* L)^{ab} \otimes f^* P_{\alpha}) &= H^i(X', f^* \mu_b^* \calf  \otimes f^* L^{ab} \otimes f^* P_{\alpha}) \\
&=\bigoplus_{ \gamma \in \hat{f}^{-1}(\hat{o}')} H^i(X, \mu_b^* \calf \otimes L^{ab} \otimes P_{\alpha} \otimes P_{\gamma}) ,
\end{align*}
where $ \hat{o}' \in \widehat{X}'$ is the origin and $\hat{f} : \widehat{X} \rightarrow \widehat{X}' $ is the dual isogeny.
Hence $h^i(X, \mu_b^* \calf \otimes L^{ab} \otimes P_{\alpha} ) =0$ for any $\alpha \in  \widehat{X}$
 if and only if $h^i(X',  {\mu'_b}^* f^* \calf'  \otimes  (f^* L)^{ab} \otimes f^* P_{\alpha}) =0$ for any $\alpha \in  \widehat{X}$.
Since $f^* P_{\alpha} =P_{\hat{f} (\alpha)}$ and $\hat{f} $ is surjective,
this is equivalent to the condition that $h^i(X',  {\mu'_b}^* f^* \calf'  \otimes  (f^* L)^{ab} \otimes P_{\alpha'}) =0$ for any $\alpha' \in  \widehat{X}'$.
Hence $\calf \langle xl \rangle$ is IT(0) if and only if  so is $f^* \calf \langle x f^* l \rangle$.\\[1mm]
(4) 
Let $\hat{\iota} : \widehat{X} \rightarrow \widehat{X}'$ be the dual surjection.
For the non-twisted case, $\calf' $ on $X'$ is IT(0) if and only if so is $\iota_* \calf'$  on $X$
since $\hat{\iota} $ is surjective and
\[
h^i(X,  \iota_* \calf' \otimes P_{\alpha}) =h^i(X', \calf' \otimes P_{\alpha}|_{X'}) = h^i(X', \calf' \otimes P_{\hat{\iota}(\alpha)})
\]
for $\alpha \in \widehat{X}$.
Hence we do not need to distinguish $\iota_* \calf'$ and $\calf'$
when we consider the property IT(0) for non-twisted sheaves.
Furthermore,
$\calf'$ is IT(0) if and only if so is $ t_p^* \calf'$  for some $p \in X$,
where $t_p : X \rightarrow X$ is the translation by $p$.

Let $g=\dim X, g'=\dim X'$.
Then $\mu_b^{-1}(X')$ is the disjoint union of $b^{2(g-g')}$ copies $X'_1,\dots, X'_{b^{2(g-g')}}$ of $X'$.
For each $j$,
there exists $p_j \in \mu_b^{-1} (o)$ such that the translation by $p_j$ gives an isomorphism 
$t_j : X' \stackrel{\sim}{\rightarrow} X'_j \subset X$. 
Then
$\mu_b^* \iota_* \calf'  \otimes L^{ab}$ is the direct sum of 
$(\mu_b^* \iota_* \calf'  \otimes L^{ab}) |_{X'_j} $ for $ 1 \leq j \leq b^{2(g-g')}$.
Since $\mu_b |_{X'_j} \circ  t_j  : X' \stackrel{\sim}{\rightarrow} X'_j \rightarrow X' \subset X$ coincides with $\mu'_b$ by $p_j \in \mu_b^{-1} (o)$,
it holds that
\[
t_j^* \left( (\mu_b^* \iota_* \calf'  \otimes L^{ab}) |_{X'_j} \right)= {\mu'_b}^* \calf'  \otimes t_j^* (L^{ab} |_{X'_j}).
\]
Hence $(\mu_b^* \iota_* \calf'  \otimes L^{ab}) |_{X'_j}$ is IT(0) if and only if so is ${\mu'_b}^* \calf'  \otimes  t_j^* (L^{ab} |_{X'_j})$
if and only if so is ${\mu'_b}^* \calf'  \otimes L|_{X'}^{ab} $.
Since $\mu_b^* \iota_* \calf'  \otimes L^{ab}$ is the direct sum of 
$(\mu_b^* \iota_* \calf'  \otimes L^{ab}) |_{X'_j} $,
$\mu_b^* \iota_* \calf'  \otimes L^{ab}$ is IT(0) if and only if so is ${\mu'_b}^* \calf'  \otimes L|_{X'}^{ab} $.
Hence $ \iota_* \calf' \langle x l\rangle$  is IT(0) if and only if so is $\calf' \langle xl|_{X'} \rangle$.
\end{proof}

For any integer $m >0$ and any polarization $l $,
it holds  that $\beta(ml) =m^{-1} \beta(l)$. 
Hence for a rational ample class $\xi \in N^1 (X)_{\Q} :=(\Pic(X) / \Pic^0 (X) ) \otimes_{\Z} {\Q}$,
we can define $ \beta(\xi) := m^{-1} \beta(m\xi)$ for sufficiently divisible $m >0$.
Thus we have a function $\beta : \mathrm{Amp}(X)  \cap N^1 (X)_{\Q} \rightarrow \R$,
where
 $\mathrm{Amp}(X) \subset N^1 (X):=(\Pic(X) / \Pic^0 (X) ) \otimes_{\Z} {\R}$
is the ample cone.

We can show that $\beta $ extends to a continuous function on $ \mathrm{Amp}(X)$,
though we do not use this fact in the rest of the paper:

\begin{prop}\label{lem_L+N}
Let $X $ be an abelian variety.
\begin{enumerate}
\item  Let $l \in \Pic(X)/\Pic^{0}(X)$ be a polarization
and $n \in \Pic(X)/\Pic^{0}(X)$ be a nef (resp.\ ample) class.
Then $\beta(l+n) \leq \beta(l)$ (resp.\ $\beta(l+n) < \beta(l)$).
\item The function 
$\beta : \mathrm{Amp}(X) \cap N^1 (X)_{\Q} \rightarrow \R$
extends to a continuous function on $\mathrm{Amp}(X)$. 
\end{enumerate}
\end{prop}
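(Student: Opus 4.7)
My plan for Part (1) is to reformulate via \autoref{lem_beta_IT(0)} and use the tensor principle IT(0)$\otimes$GV$=$IT(0) from \cite[Proposition 3.4]{MR4114062}. For any rational $x>\beta(l)$, \autoref{lem_beta_IT(0)} gives $\cali_p\langle xl\rangle$ IT(0); and since $n$ is nef and $x>0$, the class $xn$ is nef, whence $\calo_X\langle xn\rangle$ is GV by the GV-iff-nef characterization for line bundles quoted in the excerpt. The product
\[
\cali_p\langle xl\rangle\otimes\calo_X\langle xn\rangle=\cali_p\langle x(l+n)\rangle
\]
is therefore IT(0), giving $\beta(l+n)<x$; sending $x\searrow\beta(l)$ yields the non-strict inequality $\beta(l+n)\leq\beta(l)$. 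For the strict version under the stronger ampleness hypothesis on $n$, I would pick a small rational $\epsilon>0$ with $n-\epsilon l$ still ample (possible by openness of $\mathrm{Amp}(X)$), decompose $l+n=(1+\epsilon)l+(n-\epsilon l)$, and apply the non-strict inequality to the polarization $(1+\epsilon)l$ and the nef class $n-\epsilon l$; combined with the rescaling $\beta((1+\epsilon)l)=(1+\epsilon)^{-1}\beta(l)$ this gives $\beta(l+n)<\beta(l)$.

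For Part (2), the plan is to combine Part (1) with rescaling to obtain a two-sided comparison on nearby rational ample classes, and then extend by density. For any rational $l_1\in\mathrm{Amp}(X)\cap N^1(X)_\Q$ and a small rational $\epsilon>0$, any rational $l_2$ sufficiently close to $l_1$ satisfies both $(1+\epsilon)l_1-l_2$ and $l_2-(1-\epsilon)l_1$ ample. Applying Part (1) in both directions gives the sandwich
\[
(1+\epsilon)^{-1}\beta(l_1)\leq\beta(l_2)\leq(1-\epsilon)^{-1}\beta(l_1),
\]
so $\beta(l_2)\to\beta(l_1)$ as $l_2\to l_1$. Since this estimate is locally uniform on $\mathrm{Amp}(X)\cap N^1(X)_\Q$, the function $\beta$ is uniformly continuous on compact subsets of the rational ample cone, and by density of $N^1(X)_\Q$ in $N^1(X)$ it extends uniquely to a continuous function on $\mathrm{Amp}(X)$.

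There is no serious technical obstacle: the non-trivial inputs --- \autoref{lem_beta_IT(0)}, the GV/nef equivalence for line bundles, the IT(0)$\otimes$GV tensor principle, and the rescaling $\beta(ml)=m^{-1}\beta(l)$ --- are all already set up in the excerpt, so the argument reduces to careful tracking of $\Q$-twists together with a standard openness observation in the ample cone.
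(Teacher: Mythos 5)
Your proposal is correct, and both parts follow the same structural route as the paper; the only real divergence is in the choice of tensor principle for Part~(1). The paper, for a rational $x>\beta(l)$, picks an integer $m\gg 1$ so that $\frac{ma-1}{mb}>\beta(l)$ still, then factors $\mu_{mb}^*\cali_p\otimes(L\otimes N)^{m^2ab}$ as $\bigl(\mu_{mb}^*\cali_p\otimes L^{(ma-1)mb}\bigr)\otimes\bigl(L^{mb}\otimes N^{m^2ab}\bigr)$, making the second factor \emph{ample}, so it suffices to use that IT(0) tensored with an IT(0) line bundle is IT(0) (via \cite[Proposition 3.1]{MR2807853}). You instead invoke the stronger IT(0)$\otimes$GV$=$IT(0) principle (\cite[Proposition 3.4]{MR4114062}) and use only that the untouched factor $\calo_X\langle xn\rangle$ is GV (nef). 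This spares you the auxiliary perturbation by $m$ and is slightly more economical, at the cost of relying on the GV$\Leftrightarrow$nef characterization and the stronger tensor lemma. The strict version and Part~(2) agree essentially verbatim with the paper's argument: your decomposition $l+n=(1+\epsilon)l+(n-\epsilon l)$ is the paper's $c(l+n)=(c+1)l+(cn-l)$ after setting $\epsilon=1/c$ (you should clear denominators to land back in integral classes before applying Part~(1)), and the two-sided sandwich $(1+\delta)^{-1}\beta(\xi_n)\leq\beta(\xi_m)(1-\delta)^{-1}$ is exactly what the paper uses to show any convergent rational sequence has convergent $\beta$-values. Your final appeal to ``locally uniform continuity plus density'' is a little compressed --- to make it airtight one should phrase it as the paper does, directly showing $\varlimsup\beta(\xi_n)\leq\varliminf\beta(\xi_m)$ for an arbitrary rational sequence converging to a possibly irrational ample class --- but the underlying estimate is the right one.
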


\begin{proof}
(1) Let $L,N$ be line bundles representing $l,n$ respectively.

For the case when $n$ is nef,
it suffices to show that $x > \beta(l+n) $ for any $x =\frac{a}{b} > \beta(l) $.
Take an integer $m \gg 1$ so that $x=\frac{a}{b} > \frac{ma-1}{mb} > \beta(l) $.
By \autoref{lem_beta_IT(0)},
$\mu_{mb}^* \cali_p \otimes L^{(ma-1)mb}$ is IT(0).
On the other hand, $L^{mb}\otimes N^{m^2ab}  $ is IT(0) since it is ample.
Thus 
\[
\mu_{mb}^* \cali_p \otimes (L \otimes N)^{m^2ab} =(\mu_{mb}^* \cali_p \otimes L^{(ma-1)mb}) \otimes (L^{mb}\otimes N^{m^2ab} )
\]
is IT(0) as well by \cite[Proposition 3.1]{MR2807853}.
Hence $\cali_p  \langle x (l+n) \rangle $ is IT(0) by $x=\frac{ma}{mb}$,
which is equivalent to 
$\beta(l+n) < x$.

If $n$ is ample,
$c n - l$ is nef for some integer $c \gg 1$.
Then it holds that
\[
 \beta(l+n)= c \cdot \beta(c(l+n))= c \cdot  \beta((c+1)l+(cn-l)) \leq c \cdot  \beta((c+1)l)  = \frac{c}{c+1}  \beta(l) < \beta(l),
 \]
 where the first inequality follows from the nef case.
 
 \vspace{1mm}
 \noindent
 (2) It suffices to show that if a sequence $\{\xi_i\}_{i \in \N}$ in $\mathrm{Amp}(X) \cap N^1 (X)_{\Q}$ converges to some $\xi \in \mathrm{Amp}(X)$,
 then $\{\beta(\xi_i)\}_{i \in \N}$ also converges to a real number.

For any rational number $0 < \delta < 1$,
there exists $N \geq 0$ such that
both $(1+\delta) \xi_n - \xi  $ and $ \xi - (1-\delta) \xi_n$ are ample for any $n \geq N$.
Hence $(1+\delta) \xi_n -(1 -\delta) \xi_m$ is ample for any $m,n \geq N$.
By (1),
we have 
\[
(1+\delta)^{-1} \beta(\xi_n) = \beta((1+\delta) \xi_n) < \beta((1-\delta) \xi_m)  = (1-\delta)^{-1} \beta(\xi_m) .
\]
By $m, n \rightarrow \infty$,
\[
(1+\delta)^{-1} \varlimsup_{n \to \infty}  \beta(\xi_n) \leq  (1-\delta)^{-1}  \varliminf_{m \to \infty}   \beta(\xi_m)  .
\]
By $\delta \rightarrow 0$, we have
\[
\varlimsup_{n \to \infty}  \beta(\xi_n) \leq    \varliminf_{m \to \infty}   \beta(\xi_m)  .
\]
Hence $\{ \beta(\xi_n)\}$ converges to a real number.
\end{proof}

\subsection{Singularities and non-klt locus}

We recall some notation in birational geometry. 
Let $X$ be a smooth variety 
and $D=\sum_i d_i D_i$ be an effective $\Q$-divisor on $X$. 
Let $f: Y \arw X $ be a log resolution of $(X,D)$
and write 
\[
K_Y = f^* (K_X +D) + F 
\]
with $F=\sum_j b_j F_j$.
Here we assume that if $F_j$ is the strict transform of $D_i$,
we take $b_j=-d_i$,
and all the other $F_j$'s are exceptional divisors of $f$.

The pair $(X,D)$ is called \emph{log canonical}, or \emph{lc} for short,  (resp.\ \emph{klt}) at $x \in X$ 
if $b_j \geq -1$ (resp.\ $b_j > -1$) for any log resolution $f$ and any $j$ with $x \in f(F_j)$.
The pair $(X,D)$ is called log canonical (resp.\ klt) if $(X,D)$ is log canonical (resp.\ klt) at any $x \in X$.
A prime divisor $F_j$ with $b_j \leq -1$ is called a \emph{non-klt place} of $(X,D)$.
A subvariety $Z \subset X$ is called a \emph{non-klt center} of $(X,D)$
if it is the image of a non-klt place.
When $(X,D) $ is log canonical at the generic point of a non-klt center $Z$, 
$Z$ is also called a \emph{log canonical center}, or \emph{lc center} for short.

The \emph{log canonical threshold} of $(X,D)$ at $x \in X$ is 
\[
\lct_x(X,D)=\lct_x(D) := \max\{ s \geq 0 \, | \, (X,s D) \text{ is log canonical at } x\}.
\]

The \emph{multiplier ideal} $\calj(X,D)$ of $(X,D)$ is defined as
\[
\calj(X,D) := f_* \calo_Y(\lceil F \rceil),
\]
which does not depend on the choice of the log resolution $f$.
Note that $\Supp \calo_X/\calj(X,D)$
coincides with the \emph{non-klt locus} 
\[
\Nklt(X,D) :=\{ x \in X \, | \, (X,D) \text{ is not klt at } x \} .
\]

The following result about the existence of minimal lc centers is known
(see \cite{MR1207013}, \cite{MR1455517}, \cite{MR1457742}).

\begin{thm}\label{thm_minimal _center}
Let $X$ be a smooth variety 
and $D$ be an effective $\Q$-divisor on $X$ such that $(X,D) $ is log canonical.
Then every irreducible component of the intersection of two lc centers of $(X,D)$
is also a lc center of $(X,D)$.

In particular, if $(X,D) $ is log canonical but not klt at $x \in X$,
there exists the unique minimal lc center $Z$ of $(X,D)$ containing $x$.
Furthermore, $Z$ is normal at $x$. 
\end{thm}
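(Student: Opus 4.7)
The plan is to follow the classical Kawamata--Shokurov approach combining \emph{tie-breaking} with the Koll\'ar--Shokurov connectedness lemma. Both assertions are local on $X$, so I would fix the point of interest and work in an affine neighborhood of it throughout, which in particular gives access to enough sections of a line bundle very ample at the point to produce auxiliary divisors as needed.

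For the intersection property, let $Z_1,Z_2$ be lc centers of $(X,D)$, let $W$ be an irreducible component of $Z_1\cap Z_2$, and fix a general point $w\in W$. The first step is tie-breaking: construct effective $\Q$-divisors $H_1,H_2$ vanishing to carefully chosen high orders along $Z_1$ and $Z_2$ respectively (obtained from generic sections on the chosen affine neighborhood) and consider the perturbed pair
\[
(X, D') = \bigl(X,\, (1-\delta)D + \epsilon_1 H_1 + \epsilon_2 H_2\bigr)
\]
with $0<\delta\ll 1$ and the $\epsilon_i$ tuned so that (i) $(X,D')$ is log canonical, (ii) $Z_1$ and $Z_2$ remain lc centers of $(X,D')$, each realized by a unique non-klt place, and (iii) every lc center of $(X,D')$ through $w$ is already an lc center of $(X,D)$ and is contained in $Z_1\cup Z_2$. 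Taking a log resolution $f\colon Y\to X$ of $(X,D')$, let $E_1,E_2$ be non-klt places with $f(E_i)=Z_i$. The connectedness lemma applied to $(X,D')$ forces $\Nklt(X,D')$ to be connected in a neighborhood of $w$, so $E_1$ and $E_2$ must meet in the fiber over $w$. The image of $E_1\cap E_2$ is then an lc center of $(X,D')$ contained in $Z_1\cap Z_2$ and passing through $w$; by condition (iii) and the genericity of $w\in W$, this image must be exactly $W$, and it is an lc center of $(X,D)$.

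The uniqueness of the minimal lc center at $x$ follows at once: two distinct minimal lc centers through $x$ would contain as a common subcenter any irreducible component of their intersection through $x$, contradicting minimality. For the normality of the minimal lc center $Z$ at $x$, I would apply the above tie-breaking once more to arrange that $Z$ is the \emph{unique} lc center through $x$ and is realized by a single non-klt place $E\subset Y$ birational onto $Z$. Kawamata--Viehweg vanishing (or Koll\'ar injectivity) applied to the discrepancy divisor on $Y$ then yields that the natural map $\calo_Z\to g_*\calo_E$ (where $g$ denotes $f|_E$) is an isomorphism in a neighborhood of $x$; since $E$ is smooth, this gives normality of $Z$ at $x$. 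The main obstacle is the fine bookkeeping in the tie-breaking step—ensuring the $H_i$ can be produced in the local setting and that the perturbation kills all unwanted lc centers while keeping $Z_1,Z_2$ (or $Z$) with discrepancy exactly $-1$—together with the vanishing input needed for normality, which is the deepest ingredient and where the real content of the theorem lies.
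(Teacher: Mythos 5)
The paper does not prove \autoref{thm_minimal _center}; it records it as a known fact and points to \cite{MR1207013}, \cite{MR1455517}, \cite{MR1457742}, so there is no internal proof to compare against. Your sketch is the standard argument one finds in those sources and in Kawamata's subadjunction papers: tie-breaking plus the Koll\'ar--Shokurov connectedness lemma for the intersection statement, and relative Kawamata--Viehweg vanishing (via the multiplier-ideal short exact sequence) to get $g_*\calo_E=\calo_Z$ near $x$ and hence normality. The key ideas are present and correct.

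Two points that deserve to be made explicit rather than left to ``bookkeeping.'' First, the image of $E_1\cap E_2$ is only an lc center after a further blow-up: $E_1\cap E_2$ is not a divisor, and one uses that the exceptional divisor of the blow-up along the SNC intersection $E_1\cap E_2$ still has discrepancy $-1$ over $(X,D')$. Second, your condition (iii) --- that every lc center of $(X,D')$ through $w$ be contained in $Z_1\cup Z_2$ --- is stronger than what tie-breaking provides and stronger than what the argument actually needs. What one really uses is that, after scaling down $D$ and adding generic auxiliary divisors, every lc center of $(X,D')$ is already an lc center of $(X,D)$, that $Z_1,Z_2$ survive, and that the connectedness lemma produces a non-klt center lying inside $Z_1\cap Z_2$ and passing through the general point $w$ of $W$; genericity of $w$ then forces this center to be $W$. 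Stating (iii) as you did would make the tie-breaking step look harder than it is, or even unachievable in some configurations.
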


The following lemma follows from the standard Tie-breaking trick.

\begin{lem}\label{lem_perturbation}
Let $X$ be a smooth projective variety and $D$ be an effective $\Q$-divisor on $X$ such that
$(X,D)$ is lc but not klt at a point $x \in X$.
Let $Z$ be the minimal lc center of $(X,D)$ at $x$.
Assume that $B$ is an ample $\Q$-divisor on $X$ such that $B-D$ is ample.
Then there exists an effective $\Q$-divisor $F$ on $X$ such that 
$B-F$ is ample and $ \Nklt(X,F) =Z$ in a neighborhood of $x$. 
\end{lem}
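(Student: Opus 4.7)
The plan is to apply the standard tie-breaking technique. By \autoref{thm_minimal _center}, every log canonical center of $(X,D)$ through $x$ contains $Z$, so after shrinking $X$ to a neighbourhood $U$ of $x$, the non-klt centers of $(X,D)$ meeting $U$ consist of $Z$ together with finitely many subvarieties $Z_{1},\dots,Z_{n}$, each strictly containing $Z$.

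Next, using the ampleness of $B-D$, I would construct an effective $\Q$-divisor $G\sim_{\Q}\delta(B-D)$ for a suitable small rational $\delta>0$ such that (a) $\mult_Z G$ is sufficiently large (quantified below), and (b) $Z_i\not\subset\Supp G$ for every $i\ge 1$. Since $B-D$ is ample, for $m\gg 0$ the sheaf $\calo_X(m(B-D))\otimes\cali_Z^{\lceil\mu m\rceil}$ has nontrivial global sections by Serre vanishing, which yields (a); (b) holds for a generic such section, since the strict inclusion $Z\subsetneq Z_i$ guarantees $\cali_Z$ is not identically zero on $Z_i$. With care one can moreover choose $G$ in sufficiently general position that $(X,G)$ is klt away from $Z$ on $U$.

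I would then set $F=(1-\epsilon)D+G$ for a small rational $\epsilon>0$ coupled to $\delta$. The pair $(X,(1-\epsilon)D)$ is klt, and adding $G$ decreases the log discrepancy at any place dominating $Z$ by $\mult_Z G$; by (a), for $\epsilon$ small enough this pushes at least one such place to log discrepancy $\le 0$, giving $Z\subset\Nklt(X,F)$. On the other hand, the log discrepancies of places dominating $Z_i$ ($i\ge 1$) are unchanged by adding $G$, since $\mult_{Z_i}G=0$ by (b), so these places stay klt; combined with the general-position choice of $G$ this rules out new non-klt places outside $Z$ in $U$, giving $\Nklt(X,F)\cap U\subset Z$. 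Finally, the class
\[
B-F\;\equiv_{\Q}\;\epsilon B+(1-\epsilon-\delta)(B-D)
\]
is a positive combination of the ample classes $B$ and $B-D$ for $\epsilon,\delta>0$ sufficiently small, so $B-F$ is ample.

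The main obstacle is the calibration in the tie-breaking step: the multiplicity $\mult_Z G$ and the parameters $\delta,\epsilon$ must be balanced simultaneously so that the log canonical threshold of $(X,(1-\epsilon)D+G)$ at $x$ drops to exactly $1$, is realised only by (some place dominating) $Z$, and does not produce any new non-klt centre in $U$. This is the classical balancing in tie-breaking, and is where the assumption that $B-D$ is ample (and not merely nef) is used in an essential way to afford enough freedom in choosing $G$.
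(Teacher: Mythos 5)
Your proposal is correct and is essentially the same tie-breaking argument the paper uses: the paper's one-line proof takes a general effective divisor $H$ containing $Z$ and sets $F=(1-\ep)D+\ep' H$ for $0<\ep\ll\ep'\ll1$, with $H$ general guaranteeing that the other non-klt centers $Z_i$ through $x$ avoid $\Supp H$ and that no new non-klt places appear near $x$ outside $Z$. One small overcomplication on your side: you insist on making $\mult_Z G$ large via sections of $\calo_X(m(B-D))\otimes\cali_Z^{\lceil\mu m\rceil}$, but any positive multiplicity along $Z$ suffices once $\ep$ is chosen small enough relative to the fixed $G$ (equivalently, $\ep\ll\ep'$ in the paper's notation), so plain global generation of $\cali_Z\otimes\calo_X(m(B-D))$ for $m\gg0$ is all that is needed.
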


\begin{proof}
Let $H \subset X$ be a general effective divisor containing $Z$.
Then we may take $F=(1-\ep) D+ \ep' H$ for $0 < \ep \ll \ep' \ll 1$.
\end{proof}

The following lemma is also well-known.

\begin{lem}\label{lem_restriction_general_fiber}
Let $f : X \rightarrow T$ be a smooth morphism between smooth varieties,
and $D$ be an effective $\Q$-divisor on $X $. 
Set $X_t = f^{-1} (t)$ for $t \in T$.
If $t$ is general, 
it holds that
\[
\Nklt (X_t, D|_{X_t}) = \Nklt (X,D) \cap X_t.
\]
\end{lem}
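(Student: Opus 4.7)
The plan is to build a log resolution of the restricted pair $(X_t, D|_{X_t})$ by restricting a log resolution of $(X,D)$ to a general fiber, and then read off the non-klt locus from the preserved discrepancies.

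First I would take a log resolution $\mu: Y \to X$ of $(X,D)$ with
\[
K_Y = \mu^*(K_X + D) + \sum_j b_j F_j,
\]
where $\sum_j F_j$ is an SNC divisor containing $\mu^{-1}(\Supp D) \cup \mathrm{Exc}(\mu)$. Then I would apply generic smoothness (in characteristic zero) not just to $f\circ \mu: Y \to T$ but simultaneously to the restriction of $f \circ \mu$ to every stratum $F_{j_1}\cap\cdots \cap F_{j_k}$ of the SNC divisor, and to $f: \Supp D \to T$ etc. Since there are only finitely many such strata, there exists a Zariski-dense open $U \subset T$ such that for each $t \in U$: the fiber $Y_t := (f\circ\mu)^{-1}(t)$ is smooth, each $F_j|_{Y_t}$ is smooth (or empty), the divisor $\sum_j (F_j|_{Y_t})$ has SNC support on $Y_t$, and $\mu_t := \mu|_{Y_t}: Y_t \to X_t$ is a birational morphism of smooth varieties which is an isomorphism away from $\bigcup_j F_j|_{Y_t}$. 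In particular, $\mu_t$ is a log resolution of $(X_t, D|_{X_t})$.

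Next I would use adjunction to compare discrepancies. Since $f$ is smooth, the conormal of $X_t \subset X$ is trivial, so $K_X|_{X_t} = K_{X_t}$; likewise $K_Y|_{Y_t} = K_{Y_t}$. Restricting the discrepancy identity to $Y_t$ and using $(\mu^*K_X)|_{Y_t} = \mu_t^* K_{X_t}$ and $(\mu^* D)|_{Y_t} = \mu_t^*(D|_{X_t})$ (the latter because $f$, and hence $\mu$ near the generic fiber, is smooth so $D$ is flat over $T$ and pulls back compatibly), I obtain
\[
K_{Y_t} = \mu_t^*\!\left(K_{X_t} + D|_{X_t}\right) + \sum_j b_j \bigl(F_j|_{Y_t}\bigr).
\]
Thus the discrepancy of the prime divisor $F_j|_{Y_t}$ (when nonempty) with respect to $(X_t, D|_{X_t})$ is exactly $b_j$.

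Finally, a point $x \in X_t$ lies in $\Nklt(X_t, D|_{X_t})$ iff some $F_j|_{Y_t}$ with $b_j \leq -1$ meets $\mu_t^{-1}(x)$, iff some $F_j$ with $b_j \leq -1$ meets $\mu^{-1}(x)$, iff $x \in \Nklt(X,D)$. This gives the claimed equality. The main technical point is the simultaneous generic smoothness on all SNC strata that guarantees the restricted resolution is still a log resolution with the \emph{same} discrepancy coefficients; once that is in place, everything else is bookkeeping.
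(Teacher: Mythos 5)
Your argument is correct and self-contained, but it takes a different route from the paper. The paper proves the lemma in one line: it invokes the restriction theorem for multiplier ideals to a general fiber of a smooth morphism \cite[Theorem~9.5.35]{MR2095472}, which asserts $\calj(X_t, D|_{X_t}) = \calj(X,D)|_{X_t}$ for general $t$, and then just takes cosupports. What you wrote is, in effect, a direct proof of the cosupport-level consequence of that theorem: restrict a log resolution to a general fiber, use generic smoothness on every stratum to guarantee the restriction is again a log resolution, apply adjunction to both $X_t\subset X$ and $Y_t\subset Y$, and observe that the discrepancy coefficients $b_j$ are preserved. Your route is longer but is free of the black-box citation and proves exactly (and only) what is needed; the paper's route is shorter and yields the stronger equality of ideal sheaves for free. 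Two small points worth tightening in your write-up. First, your justification of $(\mu^* D)|_{Y_t} = \mu_t^*(D|_{X_t})$ via ``$D$ flat over $T$'' is indirect; the cleaner observation is that for general $t$ the fiber $Y_t$ is not contained in $\Supp(\mu^* D)$ (each $F_j$ either dominates $T$, in which case $F_j\cap Y_t$ has codimension one in $Y_t$, or misses $Y_t$ entirely), so the restriction of $\mu^* D$ to $Y_t$ is defined and agrees component-by-component with $\mu_t^*(D|_{X_t})$. Second, the ``likewise $K_Y|_{Y_t}=K_{Y_t}$'' requires a word: $f\circ\mu$ is not globally smooth, but by generic smoothness it is smooth in a neighborhood of $Y_t$ for general $t$, which is what trivializes the conormal bundle of $Y_t$ in $Y$.
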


\begin{proof}
By \cite[Theorem 9.5.35]{MR2095472},
we have $\calj(X_t, D|_{X_t}) = \calj(X,D) |_{X_t} $ if $t$ is general.
This lemma follows by considering the cosupports of the ideals.
\end{proof}

The following lemma is a special case of 
 \cite[Proposition 3.2]{MR1455517},
 and the first step of cutting down lc centers in \cite{MR1455517}, \cite{MR1457742}.

\begin{lem}\label{lem_cut_center_curve}
Let $X$ be a smooth projective variety of dimension $n$, $B$ be an ample $\Q$-divisor on $X$,
and $D \equiv B$ be an effective $\Q$-divisor on $X$.
For a point $x \in X$,
assume that $m:= \mult_x (D) > n$.
Let $c:=\lct_x (D) \leq \frac{n}{m} <1$
and let $Z \subset X$ be the minimal lc center of $(X,cD)$ at $x$.
If 
$(B^{\dim Z}.Z) \geq n^{\dim Z} \cdot \mult_x (Z)$,
there exists an effective $\Q$-divisor $D_1 \equiv c_1 B$ for some $c < c_1 <1$
such that $(X,D_1)$ is lc but not klt at $x$ and the minimal lc center $Z_1$ of $(X,D_1)$ at $x$ is properly contained in $Z$.
\end{lem}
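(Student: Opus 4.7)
The plan is to run the classical Kawamata/Angehrn--Siu cut-down technique, transferring positivity from $Z$ into a new lc pair at $x$ with strictly smaller minimal lc center.

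First, I tie-break. Since $cD \equiv cB$ and $B - cD = (1-c)B$ is ample (as $c < 1$), and $(X, cD)$ is lc but not klt at $x$ with minimal lc center $Z$, \autoref{lem_perturbation} produces an effective $\Q$-divisor $F$ with $B - F$ ample, $(X,F)$ lc but not klt at $x$, and $\Nklt(X, F) = Z$ in a neighborhood of $x$. Choosing the auxiliary divisor $H$ in the tie-breaking as a $\Q$-scaled general member of $|mB|$ containing $Z$ for $m$ suitably divisible, I arrange $F \equiv c_0 B$ with $c_0 < 1$ as close to $c$ from above as desired. By \autoref{thm_minimal _center}, $Z$ is normal at $x$.

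Second, I produce on $Z$ an effective $\Q$-divisor $G \equiv \lambda (B|_Z)$ with $\mult_x G > d := \dim Z$ and $\lambda$ arbitrarily close to $d/n$. This is a standard asymptotic dimension count: $h^0(Z, mB|_Z) \sim m^d (B^d.Z)/d!$ for large divisible $m$, while imposing vanishing of order $\geq s$ at $x$ on $Z$ costs at most $\binom{s+d-1}{d} \mult_x(Z)$ conditions; thus $\mult_x G/\lambda$ can be taken arbitrarily close to $((B^d.Z)/\mult_x Z)^{1/d} \geq n$ by hypothesis. I then lift $G$ to an effective $\Q$-divisor $D_X \equiv \lambda B$ on $X$ with $D_X|_Z \geq G$ near $x$ (hence $\mult_x(D_X|_Z) > d$), using the surjectivity $H^0(X, mB) \twoheadrightarrow H^0(Z, mB|_Z)$ for large divisible $m$ that follows from Serre vanishing applied to the restriction sequence together with the ampleness of $B$.

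Third, I analyze the family $\{(X, F + s D_X)\}_{s \geq 0}$ via inversion of adjunction along the normal subvariety $Z$: lc-ness of $(X, F + s D_X)$ near $x$ is governed by lc-ness of the restricted pair $(Z, \mathrm{Diff}_Z(F) + s D_X|_Z)$ near $x$. Since $Z$ was the (unique) minimal lc center of $(X, F)$, $(Z, \mathrm{Diff}_Z(F))$ is klt at $x$, and since $\mult_x(D_X|_Z) > d$ there is a critical $s_1 > 0$ at which $(Z, \mathrm{Diff}_Z(F) + s_1 D_X|_Z)$ becomes lc but not klt at $x$, with a new minimal lc center $W \subsetneq Z$ at $x$. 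This lifts to a new minimal lc center $Z_1 \subseteq W \subsetneq Z$ of $D_1 := F + s_1 D_X$ at $x$. The standard multiplicity estimate $s_1 \leq d/\mult_x(D_X|_Z)$ combined with $\mult_x(D_X|_Z)/\lambda \geq ((B^d.Z)/\mult_x Z)^{1/d} \geq n$ gives $s_1 \lambda \leq d/n$; together with $c_0$ close to $c \leq n/m$, the class $c_1 := c_0 + s_1 \lambda$ of $D_1$ satisfies $c < c_1 < 1$ after tuning parameters.

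The main obstacle is the coefficient bookkeeping $c_1 < 1$: producing a strictly smaller non-klt center is the standard part of the cut-down, but keeping the total coefficient below $1$ is precisely where the sharper hypothesis $(B^d.Z) \geq n^d \mult_x Z$ enters, in place of the naive $d^d \mult_x Z$ that would suffice merely to create a singularity. A secondary subtlety is the inversion-of-adjunction step yielding a \emph{strictly} smaller minimal lc center, which requires both that tie-breaking has isolated $Z$ as the unique minimal lc center of $(X, F)$ and that $\mult_x(D_X|_Z) > d$ is strict.
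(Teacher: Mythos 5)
Your strategy is sound in spirit, but the final coefficient estimate $c_1<1$ does not follow from the bounds you prove, and in fact fails in the paper's own application. You compute $s_1\lambda\leq d/n$ with $c_0$ arbitrarily close to $c$ from above, hence $c_1\approx c+d/n$. The hypotheses give only $c\leq n/m$ and $m>n$, and $c+d/n<1$ then requires $m>n^2/(n-d)$, which is strictly stronger than $m>n$ once $d\geq 1$. Concretely, in the threefold case of \autoref{main_thm} one applies the lemma with $n=3$, $Z$ a curve ($d=1$, $\mult_x Z=1$), and $m=\mult_o(D)$ only slightly larger than $3$; then $c$ can be close to $1$ and $c+1/3>1$. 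So the argument as written does not close.

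The gap comes from targeting $\mult_x(D_X|_Z)>d$. The paper does not reprove the cut-down but invokes Helmke \cite[Proposition 3.2]{MR1455517} directly, whose controlling invariant is the deficit
\[
b_x(X,cD)=\sup\{\,\mult_x(D') \mid D' \text{ effective $\Q$-divisor with } (X,cD+D') \text{ lc at } x\,\},
\]
and whose numerical hypothesis is $(B^{\dim Z}.Z)>\bigl(b_x(X,cD)/(1-c)\bigr)^{\dim Z}\mult_x Z$. The paper verifies this from $(B^{\dim Z}.Z)\geq n^{\dim Z}\mult_x Z$ via two observations that your computation omits: $b_x(X,cD)\leq n-\mult_x(cD)=n-mc$ (since the multiplicity of an lc pair at a smooth point is at most $n$), and $(n-mc)/(1-c)<n$, which is exactly equivalent to the hypothesis $m>n$. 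This is where $m>n$ is actually spent: it guarantees that the class one must add to break the pair, roughly $b_x/\bigl((B^d.Z)/\mult_x Z\bigr)^{1/d}\leq (n-mc)/n$, is strictly less than the remaining room $1-c$. Replacing the deficit $b_x$ by the coarser target $d$, as you do, severs this link to $1-c$ and loses the estimate. To salvage your from-scratch argument you would need to rework the multiplicity target on $Z$ in terms of $b_x(X,cD)$ (or simply quote Helmke's proposition as the paper does).
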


\begin{proof}
Under the assumption of this lemma,
\[
b_x(X,cD) := \sup \{ \mult_x(D') \, | \, D' \text{ is an effective $\Q$-divisor such that }  (X,cD+D') \text{ is lc at } x\}
\]
is at most $n -\mult_x(cD) =n-mc$.
Then the statement of this lemma follows from \cite[Proposition 3.2]{MR1455517}
since the condition 
\[
(B^{\dim Z}.Z)  > \left(\frac{b_x(X,cD)}{1-c}\right)^{\dim Z} \cdot \mult_x(Z)
\]
in \cite[Proposition 3.2]{MR1455517}
 is satisfied by $(B^{\dim Z}.Z) \geq n^{\dim Z} \cdot \mult_x (Z) , n-mc \geq b_x(X,cD)$ and $m >n$.
 We note that $B$ is assumed to be a line bundle in \cite[Proposition 3.2]{MR1455517},
 but the proof works for $\Q$-divisors.
\end{proof}

\begin{rem}\label{rem_cut_center_curve}
In the proof of \autoref{main_thm},
we use \autoref{lem_cut_center_curve}
when $X$ is an abelian threefold and $Z$ is a curve.
In this case,
the condition $(B^{\dim Z}.Z) \geq n^{\dim Z} \cdot \mult_x (Z)$ is equivalent to $ (B.Z) \geq 3$ since $Z$ is smooth at $x$ by \autoref{thm_minimal _center}.
\end{rem}

\section{Bpf thresholds and Seshadri constants}\label{sec_by_seshadri_constant}

For a polarized abelian variety $(X,l)$,
the \emph{Seshadri constant} of $(X,l)$ is defined by
\[
\ep(X,l) =\sup \{ t \geq 0 \, | \, \pi^* l - t E \text{ is nef}\},
\]
where $\pi : \mathrm{Bl}_o X \rightarrow X$ is the blow-up of $X$ at the origin $o$ and $E$ is the exceptional divisor.
Equivalently, we may define
\begin{align}\label{eq_seshadri_def1}
 \ep(X,l) &= \inf_V \sqrt[\dim V]{\frac{(l^{\dim V}.V)}{\mult_o (V)}} ,
\end{align}
where $ \mult_o (V)$ is the multiplicity of $V$ at $o$ and we take the infimum over all subvarieties $V \subset X$ containing $o$ with $\dim V \geq 1$.
We refer the readers to \cite[Section 5]{MR2095471} for detail.

By definition,
$ \ep(X,l)^{-1} < x$ if and only if 
$x \,  \pi^* l -  E $ is ample.
On the other hand, $\beta(l)  < x  $ if and only if $ \cali_o \langle x l \rangle $ is IT(0).
We note that $x \,  \pi^* l -  E $ (resp.\ $ \cali_o \langle x l \rangle $) can be considered as a formal tensor product of $ \calo(-E)  = \pi^{-1} \cali_o$ with $x \,  \pi^* l $ (resp.\ $\cali_o$ with $xl$).
Furthermore, recall that a line bundle $N$ on an abelian variety is IT(0) 
if and only if $N$ is ample.
Thus we might think that the definition of $\beta(l)$ is similar to that of $ \ep(X,l)^{-1}$.
In fact,
the following direct relation between $\beta(l)$ and $ \ep(X,l)^{-1}$ is known:

\begin{prop}[{\cite{MR2833789},\cite{MR4114062},\cite{CaucciThesis}}]\label{prop_bounds_seshadri}
Let $(X,l)$ be a polarized abelian variety of dimension $g$.
Then it holds that $\ep(X,l)^{-1} \leq \beta(l) \leq g \cdot \ep(X,l)^{-1} $.
\end{prop}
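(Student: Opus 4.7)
The plan is to interpret both inequalities via the blowup $\pi : \tilde X = \mathrm{Bl}_o X \to X$ with exceptional divisor $E$: the condition $\ep(X,l) \geq x^{-1}$ is equivalent to nefness of the $\Q$-divisor $x\pi^* l - E$ on $\tilde X$, while $\beta(l) < x$ is equivalent to IT(0) of $\cali_o\langle xl\rangle$ by \autoref{lem_beta_IT(0)}. Both conditions express a form of positivity of a formal twist of $\cali_o$, but the first is intersection-theoretic and the second cohomological, and the two bounds compare them in opposite directions.

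For the upper bound $\beta(l) \leq g\cdot \ep(X,l)^{-1}$, I would fix rational $x > g\cdot \ep(X,l)^{-1}$ and, to keep the discussion transparent, treat the integer case first, so that $\ep(X, xl) > g$. Via a standard Ein--K\"uchle--Lazarsfeld construction, iterating \autoref{lem_cut_center_curve} and then applying \autoref{lem_perturbation} to tie-break, I would produce an effective $\Q$-divisor $D$ on $X$ with $D \equiv (x-\delta)l$ for some $\delta > 0$ and $\Nklt(X,D) = \{o\}$ in a neighborhood of $o$. Since $K_X = 0$ and $xl - D$ is ample, Nadel vanishing gives $H^i(X, \calj(X,D)\otimes L^x\otimes P_\alpha) = 0$ for all $i>0$ and $\alpha \in \widehat X$. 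Combined with the short exact sequence $0 \to \calj(X,D) \to \cali_o \to \calQ \to 0$ whose cokernel is a skyscraper at $o$, this yields $\cali_o\otimes L^x$ IT(0), hence $\beta(l) < x$. For general $x = a/b$ one carries out the same construction for the class $(ab)l$ and the divisor $\cali_{X[b]} = \mu_b^*\cali_o$.

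For the lower bound $\ep(X,l)^{-1} \leq \beta(l)$, I would fix rational $x = a/b > \beta(l)$, so that by \autoref{lem_beta_IT(0)} and \autoref{lem_IT(0)} the sheaf $\cali_{X[b]}\otimes L^{ab}$ is IT(0). By the Pareschi--Popa theory of continuous global generation applied to this M-regular sheaf, there exist effective divisors $\tilde D \in |L^{ab}\otimes P_\alpha|$ vanishing on every $b$-torsion point but missing any prescribed finite set outside $X[b]$. Given a $d$-dimensional subvariety $V \subseteq X$ through $o$, I would intersect $\tilde V = \mu_b^{-1}(V)$ with $d$ such generic divisors $\tilde D_1,\dots,\tilde D_d$; at each of the $b^{2g}$ torsion points $p \in X[b] \subset \tilde V$, \'etaleness of $\mu_b$ gives $\mult_p \tilde V = \mult_o V$, so the intersection contributes at least $b^{2g}\mult_o V$ in total. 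Combining with the identity $(ab)^d(l^d.\tilde V) = a^d b^{2g-d}(l^d.V)$, obtained from $\mu_b^* l = b^2 l$ and the projection formula $\mu_{b,*}[\tilde V] = b^{2g}[V]$, this simplifies to $(xl)^d\cdot V \geq \mult_o V$, whence $\ep(X,l) \geq x^{-1}$ by \ref{eq_seshadri_def1}.

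The main obstacle is the lower bound: converting the cohomological IT(0) property into a geometric intersection inequality on an arbitrary subvariety $V$ requires the abelian-variety-specific Pareschi--Popa continuous global generation together with a careful accounting of the $b^{2g}$ multiplicity contributions at the torsion points of $\tilde V$, so that the factors of $b$ arising from $\mu_b^* l = b^2 l$ and from $\deg(\mu_b) = b^{2g}$ conspire to produce the sharp bound rather than a loss of a factor of $b^d$. The upper bound, by contrast, follows the well-trodden Nadel-vanishing path once the minimal non-klt center has been tie-broken down to $\{o\}$.
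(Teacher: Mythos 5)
Your proposal reconstructs both inequalities from scratch, whereas the paper proves the proposition in two lines by citation: the lower bound $\ep(X,l)^{-1}\leq\beta(l)$ is \cite[Proposition E]{CaucciThesis}, and the upper bound is factored through the intermediate threshold $r(l)$ of \ref{eq_r(l)} as $\beta(l)\leq r(l)\leq g\cdot\ep(X,l)^{-1}$, the first inequality being \cite[Proposition 1.4]{MR4114062} (Nadel vanishing) and the second being \cite[Lemma 1.2]{MR2833789}. Both of your arguments are in the right spirit but neither coincides with the cited route.

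For the lower bound, your continuous-global-generation plus Nakamaye-style multiplicity count, via $\mu_b^*\cali_o=\cali_{X[b]}$ and the balancing of factors $b^{2g}$ against $(\mu_b^*l)^d = b^{2d}l^d$, is the standard Pareschi--Popa mechanism and gives the clean bound with no loss; as far as I can tell this is the same idea Caucci uses, and it is sound modulo the routine care needed to make the successive intersections $\tilde V\cap\tilde D_1\cap\cdots\cap\tilde D_i$ proper, which continuous global generation indeed supplies.

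The upper bound has a genuine gap as written. You cut down lc centers using \autoref{lem_cut_center_curve} and arrive at a $\Q$-divisor $D$ with $\Nklt(X,D)=\{o\}$ only \emph{in a neighborhood of $o$}. Then the cokernel $\calQ$ of $\calj(X,D)\hookrightarrow\cali_o$ is \emph{not} a skyscraper supported at $o$: away from $o$, $\cali_o$ is just $\calo_X$, so $\calQ$ has support on all of $\Nklt(X,D)$, which may have positive-dimensional components elsewhere. Your short-exact-sequence argument as stated therefore does not directly yield IT(0) of $\cali_o\otimes L^x$. The conclusion can be salvaged with the idempotent decomposition of $\calo_X/\mu_b^*\calj(X,D)$ into the piece supported near $o$ and the piece supported elsewhere, which is exactly the content of \autoref{prop_local_enough'} in the paper; i.e., you are tacitly reproving $\beta(l)\leq r'(l)$ rather than $\beta(l)\leq r(l)$. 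The route the paper cites avoids this issue entirely: \cite[Lemma 1.2]{MR2833789} uses the Seshadri hypothesis $\ep(L)>g(p+2)$ to produce, by a direct construction with a general divisor in a big linear system on $\mathrm{Bl}_oX$, an $F_0$ with $\calj(X,F_0)=\cali_o$ \emph{globally} and $\frac{1}{p+2}L-F_0$ ample, so that Nadel vanishing applies without any local/global bookkeeping. In short, either invoke \autoref{prop_local_enough'} explicitly to repair the cokernel step, or replace the lc-center-cutting by the LPP direct construction to obtain global control of $\calj$ and make the skyscraper claim literally true.
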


\begin{proof}
The first inequality is shown in \cite[Proposition E]{CaucciThesis}.
For the second inequality,
we use a threshold
\begin{equation}\label{eq_r(l)}
\begin{split}
r(l) =\inf \{ t \in \Q \, | \, & \text{ there exists an effective 
$\Q$-divisor $F$ on $X$ such that}\\
& \hspace{40mm} \text{$tl -F$ is ample and } \calj(X,F)=\cali_o  \}
\end{split}
\end{equation}
introduced in \cite{MR4114062}.
By the Nadel vanishing theorem,
Caucci shows $\beta(l) \leq r(l)$ \cite[Proposition 1.4]{MR4114062}.
On the other hand,
\cite[Lemma 1.2]{MR2833789} essentially states that  $ r(l) \leq g  \cdot \ep(X,l)^{-1}$.
Hence we have the second inequality.
\end{proof}

For $\ep(X,l)^{-1}$, the following upper and lower bounds are obtained from the degrees of abelian subvarieties.

\begin{lem}\label{lem_bounds_seshadri}
Let $(X,l)$ be a polarized abelian variety.
Then 
\[
\max_Z \frac{1}{\sqrt[\dim Z]{(l^{\dim Z}.Z)}} \leq \frac{1}{\ep(X,l)} \leq \max_{Z }  \frac{\dim Z}{\sqrt[\dim Z]{(l^{\dim Z}.Z)}},
\]
where we take the maxima over all abelian subvarieties $\{o\} \neq Z \subset X$.
\end{lem}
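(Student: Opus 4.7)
The plan is to prove the two inequalities separately, as they require different ideas. For the left inequality
\[
\max_Z \frac{1}{\sqrt[\dim Z]{(l^{\dim Z}.Z)}} \leq \frac{1}{\ep(X,l)},
\]
I would argue directly from the defining formula (\ref{eq_seshadri_def1}). Each nonzero abelian subvariety $Z \ni o$ is smooth at $o$ (being a subgroup of $X$), so $\mult_o Z = 1$, and substituting $V = Z$ into (\ref{eq_seshadri_def1}) gives $\ep(X,l) \leq \sqrt[\dim Z]{(l^{\dim Z}.Z)}$. Taking the supremum over all such $Z$ yields the desired bound.

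The right inequality is the deeper one: equivalently, I need to exhibit an abelian subvariety $Z \ni o$ with $\ep(X,l) \geq \sqrt[\dim Z]{(l^{\dim Z}.Z)}/\dim Z$. The plan is to start from a subvariety $V \ni o$ of positive dimension (nearly) realizing the infimum in (\ref{eq_seshadri_def1}), and then to use the group structure of $X$ to pass from $V$ to an abelian subvariety $Z$ together with the comparison
\[
\frac{\sqrt[\dim Z]{(l^{\dim Z}.Z)}}{\dim Z} \leq \sqrt[\dim V]{\frac{(l^{\dim V}.V)}{\mult_o V}}.
\]
This estimate is the content of Nakamaye's method \cite{MR1393263}: forming a suitable sum of translates of $V$ under the group law of $X$ produces an abelian subvariety whose degree and multiplicity at $o$ can be controlled in terms of those of $V$. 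Taking the infimum over $V$ then gives the second inequality for the $Z$ arising from the minimizing $V$.

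The main obstacle will be establishing the comparison above. The factor $\dim Z$ on the left-hand side is the essential cost of homogenizing an arbitrary $V$ into an abelian subvariety: summing roughly $\dim Z / \dim V$ translates of $V$ inflates both the intersection number and the multiplicity at $o$, and one must track the effect after taking $(\dim Z)$-th roots. Handling the dimensional mismatch between $V$ and $Z$ cleanly is the technical heart; alternatively one can route the same estimate through multiplier ideals, using a divisor of high multiplicity at $o$ and cutting down non-klt centers as in \autoref{lem_cut_center_curve} until arriving at an abelian subvariety, where the factor $\dim Z$ appears as the length of the inductive cutting-down procedure.
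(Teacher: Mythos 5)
Your proposal matches the paper's proof essentially verbatim: the lower bound is deduced directly from \ref{eq_seshadri_def1} using $\mult_o(Z)=1$ for an abelian subvariety, and the upper bound is delegated to Nakamaye's \cite[Lemma 3.3]{MR1393263} (the paper also notes \cite[Theorem 1.2]{Ohta_2022} as an alternative reference). The extra narration you give about summing translates is a fair gloss on the cited lemma but is not something the paper itself unpacks; the only small point the paper adds that you omit is the observation that the suprema on both sides are actually maxima because the intersection numbers $(l^{\dim Z}.Z)$ are positive integers.
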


\begin{proof}
The lower bound follows from \ref{eq_seshadri_def1} since $\mult_o (Z) =1$ for any abelian subvariety $Z$.
The upper bound follows from \cite[Lemma 3.3]{MR1393263} or  \cite[Theorem 1.2]{Ohta_2022}.
We note that $\{ \sqrt[\dim Z]{(l^{\dim Z}.Z)^{-1}} \}_Z$ and $\{ \dim Z \cdot \sqrt[\dim Z]{(l^{\dim Z}.Z)^{-1}} \}_Z$ have maxima since $(l^{\dim Z}.Z)$'s are integers.
\end{proof}

We note that the affirmative answer to \autoref{ques2} is equivalent to 
\[
\beta (l) \leq \max_{Z }  \frac{\dim Z}{\sqrt[\dim Z]{(l^{\dim Z}.Z)}},
\]
where we take the maximum over all abelian subvarieties $\{o\} \neq Z \subset X$.
This upper bound is the same as that in \autoref{lem_bounds_seshadri}.
By \autoref{prop_bounds_seshadri} and \autoref{lem_bounds_seshadri},
we have a partial answer to \autoref{ques2}.

\begin{prop}\label{prop_bounds_any_dim}
Let $(X,l)$ be a polarized abelian variety of dimension $g$.
Then 
\[
\beta(l) \leq \max_{Z }  \frac{g \cdot \dim Z}{\sqrt[\dim Z]{(l^{\dim Z}.Z)}},
\]
where we take the maximum over all abelian subvarieties $\{o\} \neq Z \subset X$.

Equivalently,
$\beta(l) < t$ holds for $ t >0$ if 
$(B_t^{\dim Z} . Z) > (g \cdot \dim Z)^{\dim Z}$ for any abelian subvariety $\{o\} \neq Z \subset X$,
where $B_t :=tl$.
\end{prop}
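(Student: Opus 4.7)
The plan is to derive this as an immediate consequence of the two results just established, namely \autoref{prop_bounds_seshadri} ($\beta(l) \leq g \cdot \varepsilon(X,l)^{-1}$) and \autoref{lem_bounds_seshadri} (the upper bound on $\varepsilon(X,l)^{-1}$ in terms of degrees of abelian subvarieties). I do not expect any genuine obstacle here; the statement is essentially a chaining of inequalities with a small rewriting step.

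First I would chain the two inequalities directly. Combining
\[
\beta(l) \leq g \cdot \varepsilon(X,l)^{-1} \quad \text{and} \quad \varepsilon(X,l)^{-1} \leq \max_{\{o\}\neq Z \subset X} \frac{\dim Z}{\sqrt[\dim Z]{(l^{\dim Z}.Z)}}
\]
(where the maximum is over nonzero abelian subvarieties, and exists because the quantities $(l^{\dim Z}.Z)$ are positive integers, as noted in the proof of \autoref{lem_bounds_seshadri}) gives
\[
\beta(l) \leq \max_{\{o\}\neq Z \subset X} \frac{g \cdot \dim Z}{\sqrt[\dim Z]{(l^{\dim Z}.Z)}},
\]
which is the first displayed inequality.

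Next I would verify the reformulation in terms of $B_t = tl$. Unwinding $(B_t^{\dim Z}.Z) = t^{\dim Z}(l^{\dim Z}.Z)$, the hypothesis $(B_t^{\dim Z}.Z) > (g\cdot \dim Z)^{\dim Z}$ for every nonzero abelian subvariety $Z \subset X$ is equivalent to
\[
t > \frac{g \cdot \dim Z}{\sqrt[\dim Z]{(l^{\dim Z}.Z)}} \qquad \text{for every such } Z.
\]
Taking the maximum over $Z$ (which is achieved), this is the same as $t$ strictly exceeding the right-hand side of the first inequality, and hence $t > \beta(l)$. That completes the proof. The only minor point worth spelling out is that the maximum is attained, so that the strict inequalities in the hypothesis indeed propagate to a strict inequality $\beta(l) < t$ rather than merely $\beta(l) \leq t$.
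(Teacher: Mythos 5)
Your proof is correct and matches the paper's: both chain \autoref{prop_bounds_seshadri} with \autoref{lem_bounds_seshadri} and then unwind the $B_t=tl$ reformulation. The only difference is that you spell out explicitly (correctly, and helpfully) why the maximum being attained makes the strict inequality $\beta(l)<t$ follow, whereas the paper leaves this as a direct consequence.
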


\begin{proof}
This is a direct consequence of \autoref{prop_bounds_seshadri} and \autoref{lem_bounds_seshadri}.
\end{proof}

For lower bounds of $\beta(l)$,
Caucci \cite[Proposition 1.6.10]{CaucciThesis} obtains the inequality $ \beta(l) \geq \sqrt[g]{(l^g)^{-1}}$ from $\beta(l) \geq \ep(X,l)^{-1}$.
Similarly,
we also have $\beta(l) \geq \max_Z  \sqrt[\dim Z]{(l^{\dim Z}.Z)^{-1}} $
by $\beta(l) \geq \ep(X,l)^{-1}$ and \autoref{lem_bounds_seshadri}.
The following lemma improves these lower bounds
by the factor $\sqrt[\dim Z]{(\dim Z)!} $ for each $Z$. 
Since $\beta(l) < x$ if and only if $\beta(xl) <1$,
this lemma could be considered as the statement that if a $\Q$-line bundle $xL$ is ``basepoint free'', ``$h^0(xL) =$''$ x^g \chi(l)$ is greater than one,
and
$x L|_Z$ is ``basepoint free'' for any abelian subvariety $Z \subset X$.

\begin{lem}\label{lem_lower_bound}
Let $(X,l)$ be a polarized abelian variety of dimension $g$.
Then it holds that
\begin{enumerate}
\item $\beta(l) \geq \sqrt[g]{\chi( l )^{-1}}  = \sqrt[g]{g!} \sqrt[g]{(l^g)^{-1}}  $,
\item $\beta(l) \geq \beta(l|_Z)$ for any abelian subvariety $\{o\} \neq Z \subset X$.
\end{enumerate}
As a consequence, it holds that
\begin{align}\label{eq_lower_bound_Y}
\beta(l) \geq  \max_Z \frac{1}{\sqrt[\dim Z]{\chi( l |_Z)}} = \max_Z \sqrt[\dim Z]{\frac{(\dim Z)!}{(l^{\dim Z}.Z)}} ,
\end{align}
where we take the 
maxima over all abelian subvarieties $\{o\} \neq Z \subset X$.
\end{lem}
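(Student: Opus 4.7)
Assertion (1) is a direct Euler characteristic consequence of IT(0), while (2) is a restriction statement; after establishing each, the final inequality (\ref{eq_lower_bound_Y}) follows by composing them and applying (1) to $(Z, l|_Z)$.

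For (1), fix a rational number $x = a/b > \beta(l)$, so that $\cali_p \langle xl \rangle$ is IT(0) by \autoref{lem_beta_IT(0)}. Then $H^i(\mu_b^* \cali_p \otimes L^{ab} \otimes P_\alpha) = 0$ for all $i > 0$ and $\alpha \in \widehat{X}$, so the Euler characteristic $\chi(\mu_b^* \cali_p \otimes L^{ab})$ equals the generic $h^0$ and is non-negative. I would compute this Euler characteristic from $0 \to \cali_p \to \calo_X \to k(p) \to 0$, using that $\mu_b$ is an étale isogeny of degree $b^{2g}$:
\[
\chi(\mu_b^* \cali_p \otimes L^{ab}) = \chi(L^{ab}) - b^{2g} = (ab)^g \chi(l) - b^{2g}.
\]
Non-negativity forces $x^g \chi(l) \geq 1$, and passing to the infimum over such $x$ gives $\beta(l) \geq \chi(l)^{-1/g}$; the equivalent form $\sqrt[g]{g!}\,\sqrt[g]{(l^g)^{-1}}$ follows from $\chi(l) = (l^g)/g!$.

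For (2), take $x > \beta(l)$ so that $\cali_o \langle xl \rangle$ is IT(0) on $X$. The goal is to show $\cali_{o, Z} \langle x \, l|_Z \rangle$ is IT(0) on $Z$, which by \autoref{lem_IT(0)}(4) is equivalent to showing $\iota_* \cali_{o, Z} \langle xl \rangle$ is IT(0) on $X$, where $\iota \colon Z \hookrightarrow X$. The plan is to exploit the inclusions $\cali_Z \subset \cali_o \subset \calo_X$ (the first because $o \in Z$), which produce two short exact sequences
\[
0 \to \cali_Z \to \calo_X \to \iota_* \calo_Z \to 0, \qquad 0 \to \cali_Z \to \cali_o \to \iota_* \cali_{o, Z} \to 0,
\]
with the identification $\iota_* \cali_{o, Z} \cong \cali_o/\cali_Z$ underlying the second. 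Running the long exact sequence for the first, together with the IT(0) of $\calo_X \langle xl \rangle$ (automatic, as $xl$ is ample) and of $\iota_* \calo_Z \langle xl \rangle$ (by \autoref{lem_IT(0)}(4) applied to the ample $xl|_Z$), forces $\mu_b^* \cali_Z \otimes L^{ab} \otimes P_\alpha$ to have vanishing cohomology in degrees $\geq 2$ for every $\alpha$. Combined with the IT(0) of $\cali_o \langle xl \rangle$, the long exact sequence of the second then forces $H^i$ of $\mu_b^* \iota_* \cali_{o,Z} \otimes L^{ab} \otimes P_\alpha$ to vanish for all $i \geq 1$ and all $\alpha$, which is precisely IT(0) of $\iota_* \cali_{o, Z}\langle xl\rangle$.

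The bound (\ref{eq_lower_bound_Y}) then follows immediately: applying (1) to $(Z, l|_Z)$ gives $\beta(l|_Z) \geq \chi(l|_Z)^{-1/\dim Z}$, (2) gives $\beta(l) \geq \beta(l|_Z)$, and $\chi(l|_Z) = (l^{\dim Z}.Z)/(\dim Z)!$ rewrites the right-hand side into the claimed form. The main obstacle is the step in (2) where IT(0) must be transferred from the middle term $\cali_o$ to the quotient $\iota_* \cali_{o, Z}$; the trick is that IT(0) alone is not preserved by quotients, so one needs the auxiliary partial vanishing for $\cali_Z$ (in degrees $\geq 2$) obtained from the first short exact sequence to bridge the gap.
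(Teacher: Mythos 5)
Your argument for (1) is essentially the paper's: the paper derives $(ab)^g\chi(l)\geq b^{2g}$ from surjectivity of the restriction map $H^0(L^{ab})\to H^0(L^{ab}\otimes\calo_X/\mu_b^*\cali_o)$ onto a skyscraper of length $\deg\mu_b=b^{2g}$, while you read the same inequality off the nonnegativity of $\chi(\mu_b^*\cali_o\otimes L^{ab})=(ab)^g\chi(l)-b^{2g}$, which holds because an IT(0) sheaf has $\chi$ equal to its $h^0\geq 0$; these are the same computation packaged differently.

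For (2), however, your route genuinely differs from the paper's. The paper forms a commutative square whose top row is the surjection $H^0(X,L^{ab}\otimes P_\alpha)\to H^0(X,L^{ab}\otimes P_\alpha\otimes\calo_X/\mu_b^*\cali_o)$ coming from IT(0) of $\cali_o\langle xl\rangle$, whose vertical maps are restrictions to $Z$, and whose right vertical is a surjection of skyscraper sheaves onto $\calo_Z/\mu_{b,Z}^*\cali_{o/Z}$; chasing the square gives surjectivity of the bottom row directly and hence $h^1(\mu_{b,Z}^*\cali_{o/Z}\otimes L|_Z^{ab}\otimes P_\alpha|_Z)=0$, with higher vanishing automatic. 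You instead exploit the inclusion $\cali_Z\subset\cali_o$ and two short exact sequences: the first, $0\to\cali_Z\to\calo_X\to\iota_*\calo_Z\to 0$, together with the ampleness of $xl$ and $xl|_Z$, yields the auxiliary vanishing $H^{\geq 2}(\mu_b^*\cali_Z\otimes L^{ab}\otimes P_\alpha)=0$; this is exactly what is needed to bridge the long exact sequence of the second, $0\to\cali_Z\to\cali_o\to\iota_*\cali_{o/Z}\to 0$, so that IT(0) passes from the middle term $\cali_o$ to the quotient $\iota_*\cali_{o/Z}$. Both arguments are correct and of comparable length; the paper's is slightly more elementary since it stays at the $H^0$ level and avoids the auxiliary sequence by working with skyscrapers directly, while yours is more systematically homological and makes explicit the obstruction you correctly diagnose, namely that IT(0) does not descend to quotients without control of $H^2$ of the kernel. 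The derivation of \ref{eq_lower_bound_Y} from (1) and (2) is the same in both.
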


\begin{proof}
\ref{eq_lower_bound_Y} follows from (1), (2)
since we have
$ \beta(l|_Z) \geq \sqrt[\dim Z]{\chi( l |_Z)^{-1}} $ by applying (1)  to $(Z,l|_Z)$.
Hence it suffices to show (1) and (2).

\vspace{1mm}
\noindent
(1) Take a rational number $x=\frac{a}{b}  > \beta(l)$. 
It is enough to show $x \geq  \sqrt[g]{\chi(l)^{-1}}   $.

By the choice of $x$,
$ \mu_b^* \cali_o \otimes L^{ab}$ is IT(0)
and hence
\[
H^0(X, L^{ab}  ) \rightarrow H^0( X, L^{ab}  \otimes \calo_X/\mu_b^* \cali_o)
\]
is surjective. 
Thus we have
\begin{align*}
 (ab)^g \chi(l)=\chi(L^{ab} ) &=h^0(X, L^{ab}   ) \\
 &\geq h^0( X, L^{ab}  \otimes \calo_X/\mu_b^* \cali_o) = \deg \mu_b = b^{2g}
\end{align*}
and hence $ x^g = (a/b)^g \geq \chi (l)^{-1}$.

\vspace{1mm}
\noindent
(2)
Take a rational number $x=\frac{a}{b}  > \beta(l)$. 
It suffices to show that $\cali_{o/Z} \langle x l|_Z \rangle$ is IT(0),
where $\cali_{o/Z} $ is the ideal sheaf on $Z$ corresponding to  $o \in Z \subset X$.

Consider the diagram
  \[
  \xymatrix{
H^0(X, L^{ab} \otimes P_{\alpha} ) \ar[d]_{R} \ar[r]^(.4){r} \ar@{}[dr] &  H^0( X, L^{ab} \otimes P_{\alpha} \otimes \calo_X/\mu_b^* \cali_o) \ar[d]_{R'} \\
     H^0(Z, L|_Z^{ab} \otimes P_{\alpha}|_Z )   \ar[r]^(.38){r'} & H^0(Z, L|_Z^{ab} \otimes P_{\alpha}|_Z \otimes  \calo_Z/\mu_{b,Z}^* \cali_{o/Z} )  \\
  }
  \]
where $\alpha \in \widehat{X}$ and $\mu_{b,Z} : Z \rightarrow Z $ is the multiplication-by-$b$ isogeny on $Z$.
Since $ \mu_b^* \cali_o \otimes L^{ab}$ is IT(0),
$r$ is surjective for any $\alpha$.
Since $R'$ is induced from the surjection $\calo_X/\mu_b^* \cali_o \rightarrow \calo_Z/\mu_{b,Z}^* \cali_{o/Z}$ between skyscraper sheaves,
$R'$ is surjective as well.
Hence $r'$ is also surjective and hence we have $h^1(Z,  \mu_{b,Z}^* \cali_{o/Z}   \otimes L|_Z^{ab} \otimes P_{\alpha}|_Z )=0$.
Since the natural homomorphism $ \widehat{X} \rightarrow \widehat{Z}$ is surjective, 
$ \mu_{b,Z}^* \cali_{o/Z}   \otimes L|_Z^{ab}$ is IT(0).
Hence so is $\cali_{o/Z} \langle x l|_Z \rangle$. 
\end{proof}

If \autoref{ques2} has an affirmative answer,
we have
\begin{align}\label{eq_upper_and_lower}
\max_Z \frac{\sqrt[\dim Z]{(\dim Z)!}}{\sqrt[\dim Z]{(l^{\dim Z}.Z)}} \leq \beta(l) \leq \max_{Z }  \frac{\dim Z}{\sqrt[\dim Z]{(l^{\dim Z}.Z)}}
\end{align}
by \autoref{lem_lower_bound}.
These inequalities are similar to those for $\ep(X,l)^{-1}$ in \autoref{lem_bounds_seshadri}.

\vspace{2mm}
By the above similarity, 
we might expect $ \beta(l)^{-1}$ and $\ep(X,l)$ could have similar properties.
For example, both $ \beta(l)^{-1}$ and $\ep(X,l)$ are homogeneous,
that is,
$\beta(m l)^{-1} =m \beta(l)^{-1}$ and $\ep(X,m l) =m \ep(X,l) $  for any positive integer $m$.
It is known that the Seshadri constant of a direct product $(X \times X' , l \boxtimes l' )$
is the minimum of $\ep(X , l )$ and $ \ep( X' , l')$ (see \cite[Proposition 3.4]{MR3338009} for example).
Bpf thresholds also have this propoerty:

\begin{lem}\label{lem_ep_of_product}
Let $(X,l), (X',l')$ be polarized abelian varieties.
Then $\beta(X \times X' , l \boxtimes l') =\max \{\beta(X , l ), \beta( X' , l') \}$.
\end{lem}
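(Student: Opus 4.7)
The plan is to prove the two inequalities $\beta(X\times X',l\boxtimes l') \geq \max\{\beta(l),\beta(l')\}$ and $\beta(X\times X',l\boxtimes l') \leq \max\{\beta(l),\beta(l')\}$ separately, using the IT(0)-characterization of $\beta$ given in \autoref{lem_beta_IT(0)} and the stability properties of IT(0) collected in \autoref{lem_IT(0)}. The inequality $\geq$ is essentially free: apply \autoref{lem_lower_bound}(2) to the two abelian subvarieties $X\times\{o_{X'}\}$ and $\{o_{X}\}\times X'$ of $X\times X'$, whose polarizations restricted from $l\boxtimes l'$ are (under the canonical identifications) $l$ and $l'$ respectively; if one of the factors is a point the corresponding inequality is vacuous.

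For the reverse inequality, fix a rational $x>\max\{\beta(l),\beta(l')\}$ and choose closed points $p\in X$, $p'\in X'$. By \autoref{lem_beta_IT(0)} the $\Q$-twisted sheaves $\cali_p\langle xl\rangle$ and $\cali_{p'}\langle xl'\rangle$ are both IT(0); moreover $\calo_X\langle xl\rangle$ and $\calo_{X'}\langle xl'\rangle$ are IT(0) because $xl$ and $xl'$ are ample. The geometric input I need is the Mayer--Vietoris-type short exact sequence
\[
0 \to \cali_p\boxtimes\cali_{p'} \to (\cali_p\boxtimes\calo_{X'}) \oplus (\calo_X\boxtimes\cali_{p'}) \to \cali_{(p,p')} \to 0
\]
on $X\times X'$, which encodes the identities $(\cali_p\boxtimes\calo_{X'})\cap(\calo_X\boxtimes\cali_{p'}) = \cali_p\boxtimes\cali_{p'}$ and $(\cali_p\boxtimes\calo_{X'}) + (\calo_X\boxtimes\cali_{p'}) = \cali_{(p,p')}$. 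Each of the three external products appearing in the left and middle terms becomes IT(0) after twisting by $x(l\boxtimes l')$, by \autoref{lem_IT(0)}(2). Then \autoref{lem_IT(0)}(1) applied to this sequence gives that $\cali_{(p,p')}\langle x(l\boxtimes l')\rangle$ is IT(0), hence $\beta(X\times X',l\boxtimes l') < x$; letting $x\downarrow\max\{\beta(l),\beta(l')\}$ finishes the argument.

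The only step that is not an immediate application of an earlier lemma is the exactness of the Mayer--Vietoris sequence above, but this is a routine local check at $(p,p')$, where it reduces to the standard identity for ideals generated in disjoint sets of variables in a regular local ring. I do not anticipate any genuine obstacle, and expect the whole proof to be very short.
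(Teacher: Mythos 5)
Your proof is correct in outline but takes a genuinely different route from the paper. Both proofs obtain the inequality $\beta(X\times X',l\boxtimes l')\geq\max\{\beta(l),\beta(l')\}$ from \autoref{lem_lower_bound}, so that part is identical. For the reverse inequality, the paper filters $\cali_{(o,o')}$ by the ideal of the coordinate subvariety, using the short exact sequence
\[
0 \rightarrow \cali_{\{o\}\times X'} \rightarrow \cali_{(o,o')} \rightarrow \cali_{(o,o')/\{o\}\times X'} \rightarrow 0,
\]
which places the target sheaf $\cali_{(o,o')}$ in the \emph{middle}, so that parts (2) and (4) of \autoref{lem_IT(0)} handle the two ends and part (1) applies verbatim. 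You instead use the Mayer--Vietoris sequence
\[
0 \to \cali_p\boxtimes\cali_{p'} \to (\cali_p\boxtimes\calo_{X'})\oplus(\calo_X\boxtimes\cali_{p'}) \to \cali_{(p,p')} \to 0,
\]
which is more symmetric in the two factors and avoids invoking \autoref{lem_IT(0)}(4) entirely, since all three left-hand terms are plain external products handled by part (2). This is a clean and attractive variant.

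There is one inaccuracy you should fix: in your sequence, $\cali_{(p,p')}$ is the \emph{quotient}, not the middle term, so \autoref{lem_IT(0)}(1) as stated does not apply (that lemma concludes IT(0) for the middle from IT(0) of the two ends). What you actually need is the closely related fact that in a short exact sequence $0\to\calf_1\to\calf\to\calf_2\to 0$ with $\calf_1$ and $\calf$ both IT(0) after twisting, the quotient $\calf_2$ is also IT(0): this follows from the cohomology long exact sequence $H^i(\calf\otimes P_\alpha)\to H^i(\calf_2\otimes P_\alpha)\to H^{i+1}(\calf_1\otimes P_\alpha)$, since both outer groups vanish for $i\geq 1$. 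The step is elementary and in the same spirit as part (1), but it is a different implication, and a careful reader will notice the mismatch. Either prove this mini-lemma explicitly, or switch to the paper's sequence where part (1) applies directly.
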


\begin{proof}
Let $o \in X, o' \in X'  $ 
be the origins.
Since $(X,l) ,(X',l')$ are polarized abelian subvarieties of $(X \times X' , l \boxtimes l') $,
we have $\beta(X \times X' , l \boxtimes l')  \geq  \max \{\beta(X , l ), \beta( X' , l') \}$ by \autoref{lem_lower_bound}.

Take a rational number $x > \max \{\beta(X , l ), \beta( X' , l') \}$.
To prove the converse inequality $\beta(X \times X' , l \boxtimes l')  \leq  \max \{\beta(X , l ), \beta( X' , l') \}$,
it suffices to show $\beta(X \times X' , l \boxtimes l')  <  x $, that is,
$\cali_{(o,o')} \langle x(l \boxtimes l') \rangle $ is IT(0).

Consider the exact sequence
\[
0 \rightarrow \cali_{\{o\} \times X'} \rightarrow \cali_{(o,o')} \rightarrow \cali_{(o,o')/\{o\} \times X'} \rightarrow 0
\]
on $X \times X'$.
Both $\cali_{o}  \langle x l  \rangle $ and $ \calo_{X'}  \langle x l'  \rangle$ are IT(0) by $ \beta(X , l ) < x$.
Hence $\cali_{\{o\} \times X'}   \langle x(l \boxtimes l') \rangle $ is IT(0) by \autoref{lem_IT(0)} (2).
By $\beta( X' , l')  < x$, the $\Q$-twisted sheaf $\cali_{o'} \langle x l'  \rangle$ on $X'$ is IT(0).
Thus the $\Q$-twisted sheaf $\cali_{(o,o')/\{o\} \times X'}  \langle x(l \boxtimes l') \rangle $
is IT(0) by \autoref{lem_IT(0)} (4).
Hence $\cali_{(o,o')} \langle x(l \boxtimes l') \rangle$ is IT(0) by \autoref{lem_IT(0)} (1).
 \end{proof}

Another basic property of Seshadri constant is the inequality
\[
\ep(X,l+n) \geq \ep(X,l) + \ep(X,n)
\]
for two polarizations $l,n$ on $X$.
Hence it would be interesting to ask if 
\[
\beta( l+n )^{-1} \geq \beta( l )^{-1}  +\beta( n)^{-1} 
\]
holds or not in general.

\section{Bpf thresholds on abelian surfaces}\label{sec_surface}

In this section,
we study Bpf thresholds on abelian surfaces.
First,
we show that the inequalities \ref{eq_upper_and_lower} hold for abelian surfaces.
We note that the upper bounds of $\beta(l)$ in the following proposition are stated in \cite[p.950]{MR4114062}.

\begin{prop}\label{prop_bounds_surface}
Let $(X,l)$ be a polarized abelian surface
and set
\[
\mu(X,l) =\sup \{ t \geq 0 \, | \, \pi^* l - t E \text{ is big}\},
\]
where $\pi : \mathrm{Bl}_o X \rightarrow X$ is the blow-up of $X$ at the origin and $E$ is the exceptional divisor.
Let $r(l) \in \R$ be the threshold defined in  \ref{eq_r(l)}.
Then it holds that
\begin{align}\label{eq_bounds_surface}
\max \left\{ \frac{\sqrt{2}}{\sqrt{(l^2)}}, \max_{C} \frac{1}{(l.C)} \right\} \leq \beta(l)  \leq r(l) \leq \max \left\{ \frac{2}{\mu(X,l)}, \max_{C} \frac{1}{(l.C)} \right\}  \leq  \max \left\{ \frac{2}{\sqrt{(l^2)}}, \max_{C} \frac{1}{(l.C)} \right\},
\end{align}
where we take the maxima over all elliptic curves $C \subset X$.
In particular,
\ref{eq_upper_and_lower} holds for abelian surfaces.
\end{prop}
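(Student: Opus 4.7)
The plan is to chain the four inequalities in \eqref{eq_bounds_surface} and handle each separately: the leftmost follows from \autoref{lem_lower_bound}, the inner two are attributed to Caucci in \cite[Proposition 1.4 and p.~950]{MR4114062}, and the rightmost reduces to a standard bigness computation on $\mathrm{Bl}_o X$.

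For the leftmost inequality, I would apply \autoref{lem_lower_bound} directly. Taking $g = 2$ in part (1) gives $\beta(l) \geq \sqrt{2!/(l^2)} = \sqrt{2}/\sqrt{(l^2)}$, and part (2) combined with \autoref{rem_curve} gives $\beta(l) \geq \beta(l|_C) = 1/(l.C)$ for every elliptic curve $C \subset X$; the maximum over such $C$ yields the desired bound. The inequality $\beta(l) \leq r(l)$ is \cite[Proposition 1.4]{MR4114062}, already invoked in the proof of \autoref{prop_bounds_seshadri}, so nothing remains there.

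The substantive inequality $r(l) \leq \max\{2/\mu(X,l), \max_C 1/(l.C)\}$ is established by the standard multiplier-ideal construction. For a rational $t$ exceeding the right-hand side, I would pick a rational $t_0 \in (2/\mu(X,l), t)$; bigness of $\pi^* (t_0 l) - 2E$ on $\mathrm{Bl}_o X$ then produces an effective $\Q$-divisor $D \equiv t_0 l$ with $\mult_o D \geq 2$. Setting $c = \lct_o(X, D) \leq 1$, the pair $(X, cD)$ is lc but not klt at $o$, and its minimal lc center $Z$ is normal at $o$ by \autoref{thm_minimal _center}, hence either $Z = \{o\}$ or a smooth curve through $o$. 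If $Z = \{o\}$, then \autoref{lem_perturbation} together with a standard translation argument on the abelian surface (to clean up any non-klt locus elsewhere) produces an $F$ with $\calj(X, F) = \cali_o$ and $tl - F$ ample. If $Z$ is a curve, the translation symmetry of an abelian surface (replacing $D$ by an average of translates if needed) forces $Z$ to be an elliptic subvariety, and the assumption $t(l.Z) > 1$ then supplies the numerical room to cut $Z$ down to $\{o\}$ by an Angehrn--Siu type step analogous to \autoref{lem_cut_center_curve}, reducing to the previous case.

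For the rightmost inequality it suffices to verify $\mu(X,l) \geq \sqrt{(l^2)}$. On the surface $\mathrm{Bl}_o X$, for $0 < t < \sqrt{(l^2)}$ one computes $(\pi^* l - tE)^2 = (l^2) - t^2 > 0$ and $(\pi^* l - tE) \cdot \pi^* l = (l^2) > 0$, so $\pi^* l - tE$ is big by the standard surface Riemann--Roch criterion (since $K_{\mathrm{Bl}_o X} = E$, Serre duality forces $h^2(n(\pi^* l - tE)) = 0$ for $n \gg 0$, and quadratic growth of $\chi$ then gives quadratic growth of $h^0$). The main obstacle of the whole argument is the curve case in the third step: one must show that the minimal lc center $Z$ is an elliptic subvariety, so that the sharp hypothesis $t > 1/(l.C)$ applies and the cutting-down procedure actually goes through.
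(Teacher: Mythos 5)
The overall architecture of your proof matches the paper's: chain the four inequalities and prove each separately. Your treatment of the first inequality (via \autoref{lem_lower_bound} with $g=2$ together with \autoref{rem_curve}), the second (Caucci's \cite[Proposition 1.4]{MR4114062}), and the fourth ($\mu(X,l)\geq\sqrt{(l^2)}$ by Riemann--Roch on $\mathrm{Bl}_o X$) are all correct and are essentially what the paper does. The difference is the third, substantive inequality $r(l) \leq \max\{2/\mu(X,l), \max_C 1/(l.C)\}$: the paper simply cites \cite[Proposition 3.1]{MR3923760}, whereas you attempt to reprove it from scratch, and your argument has a genuine gap precisely at the curve case you flag as ``the main obstacle.''

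Specifically, two things break. First, the assertion that ``translation symmetry (replacing $D$ by an average of translates) forces $Z$ to be an elliptic subvariety'' is not justified and is not how the dichotomy actually arises: averaging translates spreads the non-klt locus rather than constraining the lc center. The correct dichotomy is between $\langle Z\rangle = X$ (where Debarre's degree bound \cite[Proposition 4.1]{MR1294460}, used in this paper as \autoref{lem_deg_curve}, gives $(B.Z) \geq 2\sqrt{(B^2)/2} > 2\sqrt{2}$ under $(B^2)>4$) and $Z$ elliptic. Second, even once you know $Z$ is elliptic, the cut-down mechanism of \autoref{lem_cut_center_curve} (Helmke's criterion) requires $(B.Z) \geq n^{\dim Z}\cdot\mult_x(Z) = 2$ in dimension $2$, whereas your hypothesis only supplies $(B.Z) = t(l.Z) > 1$. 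So the ``Angehrn--Siu type step'' you invoke does not apply in the range $1 < (B.Z) \leq 2$, which is exactly the regime the sharp bound $\max_C 1/(l.C)$ is designed to cover. The elliptic case genuinely needs a separate argument (which is what \cite[Proposition 3.1]{MR3923760} supplies), and without it your proof yields only the weaker bound $\max\{2/\mu(X,l), \max_C 2/(l.C)\}$. You should either cite \cite[Proposition 3.1]{MR3923760} as the paper does, or supply the missing elliptic-curve argument.
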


\begin{proof}
The first inequality follows from \autoref{lem_lower_bound}.
The second inequality is shown by \cite[Proposition 1.4]{MR4114062}.
The third inequality follows from \cite[Proposition 3.1]{MR3923760}.
Since $ \mu(X,l) \geq \sqrt{(l^2)}$ by the Riemann-Roch theorem, the last inequality follows.
\end{proof}

\begin{rem}\label{rem_bound_mu}
If $\sqrt{(l^2)}$ is not an integer,
we have a lower bound of $ \mu(X,l) $ which is slightly better than $\sqrt{(l^2)}$ as follows.

We may assume that $l$ is primitive, i.e.\ 
$l$ is not written as $kn$ for some integer $k \geq 2$ and some polarization $n$.
Let $(1,d)$ be the type of $l$.
In particular, $\sqrt{2d}=\sqrt{(l^2)} $ is not an integer.
Let $(k_0,m_0) $ be the primitive solution of Pell's equation $m^2-2dk^2=1$.
By the proof of \cite[Theorem A.1, (a)]{MR1660259},
there exists an effective divisor $D \equiv 2k_0 l$ such that $\mult_o (D)= 2m_0$.
Hence we have 
\[
\mu(X,l) \geq   \frac{m_0}{k_0} =\sqrt{2d + \frac{1}{k_0^2}} >\sqrt{2d}= \sqrt{(l^2)}.
\]
If the Picard number of $X$ is one, we have $\mu(X,l) =  m_0/k_0$
by \cite[Theorem 6.1, (b)]{MR1678549}.
\end{rem}

If $ 2/\sqrt{(l^2)} \leq  \max_{C}  1/(l.C) $,
both the upper and lower bounds of \ref{eq_bounds_surface} are $ \max_{C} 1/(l.C) $ 
and hence we have $\beta(l) =  \max_{C} 1/(l.C) $.
We show that this equality 
holds
under a weaker condition $ \sqrt{2}/\sqrt{(l^2)} \leq  \max_{C} 1/(l.C) $.
We use the following lemma:

\begin{lem}\label{lem_small_deg_hypersurface}
Let $(X,l)$ be a polarized abelian $g$-fold
and $S \subset X$ be an abelian subvariety of codimension one.
Then it holds that
\[
\beta(l|_S) \leq \beta(l) \leq \max \left\{ \beta(l|_S), \frac{g (l^{g-1}.S)}{(l^g)} \right\} =\max \left\{ \beta(l|_S), \frac{\chi(l|_S)}{\chi(l)} \right\} .
\]
\end{lem}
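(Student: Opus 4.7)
The first inequality $\beta(l|_S) \leq \beta(l)$ is an instance of \autoref{lem_lower_bound}\,(2), and the identity $g(l^{g-1}.S)/(l^g) = \chi(l|_S)/\chi(l)$ is Riemann--Roch on abelian varieties, from $\chi(l) = (l^g)/g!$ and $\chi(l|_S) = (l^{g-1}.S)/(g-1)!$. I thus focus on the upper bound. Fix a rational $x > \max\{\beta(l|_S),\, g(l^{g-1}.S)/(l^g)\}$ and set $p = o \in S$; by \autoref{lem_beta_IT(0)} it is enough to prove that $\cali_p\langle xl\rangle$ is IT$(0)$. The short exact sequence
\[
0 \to \cali_S \to \cali_p \to \iota_*\cali_{p/S} \to 0
\]
on $X$, with $\iota : S \hookrightarrow X$, together with \autoref{lem_IT(0)}\,(1), reduces this to the IT$(0)$ property of each of the two outer $\Q$-twisted sheaves. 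The term $\iota_*\cali_{p/S}\langle xl\rangle$ is IT$(0)$ by \autoref{lem_IT(0)}\,(4) and $x > \beta(l|_S)$.

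The main obstacle is to show $\cali_S\langle xl\rangle = \calo_X(-S)\langle xl\rangle$ is IT$(0)$, equivalently that the $\Q$-divisor $xL - S$ is ample on $X$. Let $f : X \to E := X/S$ be the quotient, so that $E$ is an elliptic curve and $\calo_X(S) = f^*\calo_E([o])$. For $x = a/b$, we have $\mu_b^*\cali_S = f^*\calo_E(-\mu_{b,E}^*[o])$. Since $L^{ab}\otimes P_\alpha$ restricts to an ample line bundle on every (abelian) fibre of $f$, relative Kodaira vanishing gives $R^{>0}f_*(L^{ab}\otimes P_\alpha) = 0$, and the Leray spectral sequence reduces the required $H^{>0}$-vanishing on $X$ to
\[
H^1\bigl(E,\, \calo_E(-\mu_{b,E}^*[o]) \otimes f_*(L^{ab}\otimes P_\alpha)\bigr) = 0
\]
for every $\alpha \in \widehat X$.

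The decisive input, and the key technical point, is that $f_*(L^{ab}\otimes P_\alpha)$ is a semistable vector bundle on the elliptic curve $E$; this follows from the structure theory of pushforwards of ample line bundles under abelian quotient morphisms, and is insensitive to the numerically trivial twist $P_\alpha$. By Leray, its rank is $\chi(L^{ab}|_S) = (ab)^{g-1}\chi(l|_S)$ and its degree is $\chi(L^{ab}) = (ab)^g\chi(l)$ (using $\chi(\calo_E) = 0$), giving slope $ab\chi(l)/\chi(l|_S)$. Tensoring with the degree-$(-b^2)$ line bundle $\calo_E(-\mu_{b,E}^*[o])$ preserves semistability and shifts the slope to $ab\chi(l)/\chi(l|_S) - b^2$, which is strictly positive precisely when $a/b > \chi(l|_S)/\chi(l)$, the hypothesis on $x$. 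Since a semistable vector bundle of positive slope on an elliptic curve has vanishing $H^1$, the desired IT$(0)$ follows and the proof is complete. If a clean citation for the semistability of $f_*(L^{ab}\otimes P_\alpha)$ is awkward, I would instead apply Poincar\'e reducibility to pass via an isogeny $\sigma : S \times T \to X$ ($T$ an elliptic curve complementary to $S$), reducing the ampleness of $xL - S$ to that of $x\sigma^*L - |K|\,p_T^*[o]$ on $S \times T$, which can be analysed directly through the projection to $T$.
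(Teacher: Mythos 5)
Your proof is correct, and the first inequality, the Riemann--Roch identity, the short exact sequence, and the reduction via \autoref{lem_IT(0)} are all exactly as in the paper. The divergence is in how you handle the IT$(0)$ property of $\cali_S\langle xl\rangle$. You note (correctly) that this is equivalent to ampleness of $xL-S$, but then ignore that observation and compute directly: quotient $f : X \to E = X/S$, relative vanishing, Leray, and the semistability of $f_*(L^{ab}\otimes P_\alpha)$ on the elliptic curve $E$, with a slope computation pinpointing exactly the threshold $\chi(l|_S)/\chi(l)$. This is a valid but much longer route, and it hinges on the semistability of the pushforward, which is a nontrivial input requiring its own justification (your Poincar\'e-reducibility fallback does supply one, since after the isogeny $S\times T \to X$ the pullback of $L$ becomes a genuine box product and the relevant pushforward over $T$ is a direct sum of isomorphic line bundles). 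The paper instead dispatches the ampleness in two lines: the numerical hypothesis $((xl)^g) > g\,((xl)^{g-1}.S)$ makes $xl - S$ big by the Siu-type criterion \cite[Theorem 2.2.15]{MR2095472}, and big implies ample on an abelian variety since the nef and pseudoeffective cones coincide there. That argument is strictly more economical, but your slope computation does have the merit of making the appearance of the ratio $\chi(l|_S)/\chi(l)$ conceptually transparent rather than a byproduct of a bigness criterion.
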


\begin{proof}
The first inequality follows from \autoref{lem_lower_bound} (2).

Take a rational number $x > \max\{ \beta(l|_S), g (l^{g-1}.S)/(l^g) \} $.
To prove the second inequality,
it suffices to show that $ \cali_o  \langle xl \rangle $ is IT(0).

By $x > g (l^{g-1}.S)/(l^g)$,
we have $((xl)^g) > g \cdot ((xl)^{g-1}.S) $.
Hence $xl -  S$ is big by \cite[Theorem 2.2.15]{MR2095472}.
Since $X$ is an abelian variety,
$xl - S$ is ample.

Consider an exact sequence
\[
0 \rightarrow \cali_S \rightarrow  \cali_o \rightarrow \cali_{o/S} \rightarrow 0
\]
on $X$.
Since $xl-S$ is ample,
$\cali_S  \langle xl \rangle = \calo_X(-S)  \langle xl \rangle$ is IT(0).
By $x >  \beta(l|_S)$,
$\cali_{o/S}  \langle xl|_S \rangle$ is IT(0) as a $\Q$-twisted sheaf on $S$.
Hence $\cali_{o/S}  \langle xl \rangle$ is IT(0) as a $\Q$-twisted sheaf on $X$ by \autoref{lem_IT(0)} (4).
Then $ \cali_o  \langle xl \rangle$ is IT(0)
by \autoref{lem_IT(0)} (1).
\end{proof}

\begin{prop}\label{prop_surface}
Let $(X,l)$ be a polarized abelian surface.
Assume that there exists 
an elliptic curve $C \subset X$ such that $ \sqrt{(l^2)}/ \sqrt{2} \geq  (l.C)$.
Then it holds that
\[
\beta(l)= \ep(X,l)^{-1} =\frac{1}{(l.C)} .
\]
\end{prop}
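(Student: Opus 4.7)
The plan is to apply \autoref{lem_small_deg_hypersurface} with $S = C$, viewing the elliptic curve $C \subset X$ as a codimension-one abelian subvariety of the surface $X$. That lemma yields the two-sided bound
\[
\beta(l|_C) \,\leq\, \beta(l) \,\leq\, \max\!\left\{\beta(l|_C),\, \frac{2\,(l.C)}{(l^2)}\right\}.
\]
By \autoref{rem_curve} applied to the elliptic curve $(C, l|_C)$, we have $\beta(l|_C) = 1/(l.C)$, so the left-hand inequality already gives the lower bound $\beta(l) \geq 1/(l.C)$.

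For the upper bound, the hypothesis $\sqrt{(l^2)}/\sqrt{2} \geq (l.C)$ rewrites as $(l^2) \geq 2(l.C)^2$, which is equivalent to $2(l.C)/(l^2) \leq 1/(l.C)$. Hence the maximum on the right-hand side collapses to $1/(l.C)$, and we conclude $\beta(l) \leq 1/(l.C)$. Combined with the previous step this establishes $\beta(l) = 1/(l.C)$. This is the main content of the proposition, and the key point is simply that the hypothesis is tailored to make the ``hypersurface'' term in \autoref{lem_small_deg_hypersurface} no larger than the ``restriction'' term; there is no real obstacle.

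It remains to identify $\ep(X,l)^{-1}$ with the same value. The inequality $\ep(X,l)^{-1} \leq \beta(l) = 1/(l.C)$ is immediate from \autoref{prop_bounds_seshadri}. For the reverse direction, after translating $C$ so that it passes through $o$ (which preserves the numerical intersection $(l.C)$), the definition of Seshadri constants via subvarieties in \ref{eq_seshadri_def1} gives
\[
\ep(X,l) \leq \frac{(l.C)}{\mult_o(C)} = (l.C),
\]
using that $C$ is smooth at $o$. Hence $\ep(X,l)^{-1} \geq 1/(l.C)$, finishing the proof.
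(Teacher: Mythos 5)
Your argument is essentially identical to the paper's: both invoke \autoref{lem_small_deg_hypersurface} with $S=C$, compute $\beta(l|_C) = 1/(l.C)$ via \autoref{rem_curve}, observe that the hypothesis collapses the maximum, and then sandwich $\ep(X,l)$ between $\beta(l)^{-1}$ (from \autoref{prop_bounds_seshadri}) and $(l.C)$ (from \ref{eq_seshadri_def1}). The only cosmetic difference is your remark about translating $C$ through the origin, which is harmless but unnecessary here since an elliptic curve in an abelian variety is by convention an abelian subvariety and already contains $o$.
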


\begin{proof}
By \autoref{rem_curve}, we have $\beta(l|_C)  =\deg(l|_C)^{-1}=(l.C)^{-1}$.
By \autoref{lem_small_deg_hypersurface},
\[
\frac{1}{(l.C)} =\beta(l|_C) \leq \beta(l) \leq \max \left\{ \beta(l|_C), \frac{2 (l.C)}{(l^2)} \right\} = \max \left\{ \frac{1}{(l.C)}, \frac{2 (l.C)}{(l^2)} \right\} =\frac{1}{(l.C)},
\]
where the last equality follows from
the assumption $ \sqrt{(l^2)}/ \sqrt{2} \geq  (l.C)$.
Hence we have $\beta(l) = (l.C)^{-1}$.

For the Seshadri constant, it holds that
\[
(l.C) = \frac{1}{\beta(l)} \leq \ep(X,l) \leq (l.C),
\]
where the first inequality follows from \autoref{prop_bounds_seshadri} and the second inequality follows from \ref{eq_seshadri_def1}.
Thus we have $\ep(X,l) = \beta(l)^{-1} =(l.C)$.
\end{proof}

\autoref{prop_surface} states that we can compute $\beta(l)$ when $X$ contains an elliptic curve whose degree $(l.C)$ is small
compared to the volume $(l^2)$ as in the following example.
As far as the author knows,
this is the first example to compute $\beta(l)$ other than (the multiples of) non-basepoint free polarizations.

\begin{ex}\label{ex_beta}
Let $X$ be an abelian surface which contains an elliptic curve $C$.
Let $l'$ be a polarization on $X$
and set $l=l'+a [C]$ for $a \geq (l'.C)$,
where $[C] \in \Pic X / \Pic^0 X$ is the class of $C$.
Then $ \sqrt{(l^2)} \geq \sqrt{2} (l.C)$ and 
hence we have
 $\beta(l)=(l.C)^{-1}=(l'.C)^{-1}$
by \autoref{prop_surface}.
\end{ex}

\begin{rem}
We note that the inequalities in \ref{eq_bounds_surface} are sharp in the sense that
$\beta(l) $ could coincide with $\sqrt{2}/\sqrt{(l^2)}$, $2/\sqrt{(l^2)}$ and $\max_{C} (l.C)^{-1} $ respectively
as follows:

If $l$ is a principal polarization,
$(l^2)=2$ and any $L$ representing $l$ is not basepoint free.
Hence
$\beta(l) =1 =\sqrt{2}/\sqrt{(l^2)}$. 
If $(l^2) =4$, $L$ is not basepoint free since $h^0(L)=2$.
Hence $\beta(l) =1 =2/\sqrt{(l^2)}$.
An example which satisfies $\beta(l) =\max_{C} (l.C)^{-1} $ is given in \autoref{ex_beta}.
\end{rem}

For the next proposition,
we recall three notions on higher order embeddings.
A line bundle $N$ on a smooth projective variety $M $ is called 
$k$-\emph{jet ample} for an integer $k \geq 0$
if the restriction map 
\[
H^0 ( L ) \rightarrow H^0 ( L \otimes \calo_M/ \cali_{p_1}\cali_{p_2} \cdots \cali_{p_{k+1}}) 
\]
is surjective for any (not necessarily distinct) $k+1$ points $p_1,\dots,p_{k+1} \in M$.
A line bundle $N$ is called 
$k$-\emph{very ample} (resp.\ $k$-\emph{spanned})
if the restriction map $H^0(M,N) \rightarrow H^0(Z,N|_Z)$ is surjective for any $0$-dimensional subscheme $Z $ 
(resp.\ for any curvilinear $0$-dimensional subscheme $Z$)
of $M$
with $\mathrm{length} (\calo_Z )= k+1$. 
It is known that $k$-jet ampleness implies $k$-very ampleness
(cf.\ \cite[Proposition 2.2]{MR1211891}),
which of course implies $k$-spannedness.

On abelian varieties,
these notions
are studied in \cite{MR1439201}, \cite{MR1376538}, \cite{MR1654705}, \cite{MR2008719}, etc.
In \cite[Theorem D]{CaucciThesis}, Caucci shows that 
an ample line bundle $L$ on an abelian variety is $k$-jet ample if $\beta(l) < 1/(k+1)$.
On abelian surfaces,
these notions on higher order embeddings are completely determined by $\beta(l)$ or $\ep(X,l)$ under a suitable assumption:

\begin{prop}\label{prop_equivalence}
Let $t >0 $ be a rational number
and $L$ be an ample line bundle on an abelian surface $X$ such that $(L^2) > 4/t^2$.
Then the following are equivalent:
\begin{itemize}
\item[(a)] $\beta(l) < t$,
\item[(b)] $\ep(X,l) > t^{-1}$,
\item[(c)] $(l.C) > t^{-1}$ for any elliptic curve $C \subset X$,
\end{itemize}
where $l=[L]$ is the polarization represented by $L$.

\vspace{1mm}
If $t=1/(p+2) $ for an integer $p \geq -1$,
(in particular, we assume $(L^2) > 4(p+2)^2$),
(a)-(c) and the following are equivalent: 
\begin{itemize}
\item[(d)] $L$ is $(p+1)$-spanned,
\item[(e)] $L$ is $(p+1)$-very ample,
\item[(f)] $L$ is $(p+1)$-jet ample.
\end{itemize}

\vspace{1mm}
Moreover,
if $p \geq 0$,
(a)-(f) are equivalent to 
\begin{itemize}
\item[(g)] $L$ satisfies $(N_p)$.
\end{itemize}
\end{prop}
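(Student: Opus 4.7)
The hypothesis $(L^{2})>4/t^{2}$ is equivalent to $2/\sqrt{(l^{2})}<t$, which makes the ``volume'' terms in the estimates of \autoref{prop_bounds_surface} and \autoref{lem_bounds_seshadri} harmless, leaving $\max_{C}(l.C)^{-1}$ as the decisive quantity. My plan is therefore to prove the equivalences in three stages: (a)$\Leftrightarrow$(b)$\Leftrightarrow$(c) by direct comparison of the bounds; (c)$\Leftrightarrow$(d)$\Leftrightarrow$(e)$\Leftrightarrow$(f) via Caucci's jet-ampleness theorem plus restriction of $(p+1)$-spannedness to elliptic curves; and (a)$\Leftrightarrow$(g) via \autoref{thm_ Bpf_threshold}(2) together with a Koszul-cohomology restriction.

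For Stage~1, specializing \autoref{lem_bounds_seshadri} to the abelian surface case (where the nontrivial abelian subvarieties are either elliptic curves or $X$ itself) and combining with \autoref{prop_bounds_surface}, the hypothesis forces both the upper bounds for $\beta(l)$ and for $\ep(X,l)^{-1}$ to collapse to $\max_{C}(l.C)^{-1}$; combined with the lower bounds this gives that each of (a), (b) is equivalent to (c). For Stage~2 with $t=1/(p+2)$, Caucci's \cite[Theorem~D]{CaucciThesis} yields (a)$\Rightarrow$(f), the implications (f)$\Rightarrow$(e)$\Rightarrow$(d) are the standard ones cited in the text, and (d)$\Rightarrow$(c) reduces to the following restriction: a length-$(p+2)$ subscheme $Z$ of an elliptic curve $C\subset X$ is curvilinear in $X$, so $(p+1)$-spannedness of $L$ gives a surjection $H^{0}(X,L)\twoheadrightarrow H^{0}(Z,L|_{Z})$ that factors through $H^{0}(C,L|_{C})$; hence $L|_{C}$ is $(p+1)$-spanned on $C$, which on an elliptic curve is equivalent to $\deg L|_{C}\geq p+3$ via $H^{1}(L|_{C}\otimes\cali_{Z})=0$, so $(l.C)>t^{-1}$.

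For Stage~3 with $p\geq 0$, the direction (a)$\Rightarrow$(g) is \autoref{thm_ Bpf_threshold}(2). For the converse (g)$\Rightarrow$(c), I would assume $(l.C)\leq p+2$ for some elliptic $C$ and derive a contradiction by showing $L|_{C}$ satisfies $(N_{p})$ on $C$, which on an elliptic curve forces $\deg L|_{C}\geq 2g(C)+1+p=p+3$. The starting observation is that $(C^{2})=0$ (since $C$ is a translate of an abelian subvariety of codimension one) combined with $(L^{2})>4(p+2)^{2}$ makes $nL-C$ big with $(nL-C).L>0$ for every $n\geq 1$; as every big class on an abelian surface is pseudo-effective and hence nef, $nL-C$ is nef and big, and Kawamata--Viehweg vanishing (with $K_{X}=0$) yields $H^{1}(X,L^{n}\otimes\calo_{X}(-C))=0$, so every restriction $H^{0}(X,L^{n})\to H^{0}(C,L|_{C}^{n})$ is surjective. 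One then transfers the Koszul vanishings $H^{1}(X,\bigwedge^{i+1}M_{L}\otimes L)=0$ (holding for $0\leq i\leq p$ by $(N_{p})$) to $C$ through the exact sequence $0\to\calo_{X}(-C)\to\calo_{X}\to\calo_{C}\to 0$ tensored with $\bigwedge^{\bullet}M_{L}\otimes L$, and compares $M_{L}|_{C}$ with $M_{L|_{C}}$ via the extension $0\to W\otimes\calo_{C}\to M_{L}|_{C}\to M_{L|_{C}}\to 0$, where $W=\ker(H^{0}(X,L)\to H^{0}(C,L|_{C}))$. The main obstacle is controlling the $H^{2}$ error terms $H^{2}(X,\bigwedge^{k}M_{L}\otimes L\otimes\calo_{X}(-C))$ that appear in this propagation; I would handle them by Serre duality on the abelian surface together with the IT(0) and generic-vanishing machinery already in play in \autoref{prop_vanishing_K}.
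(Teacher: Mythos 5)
Your Stages~1 and~2 reproduce the paper's argument almost verbatim: the equivalence (a)$\Leftrightarrow$(b)$\Leftrightarrow$(c) comes from combining \autoref{prop_bounds_surface} and \autoref{lem_bounds_seshadri} under the hypothesis $(L^2)>4/t^2$; (a)$\Rightarrow$(f) is \cite[Theorem D]{CaucciThesis}; (f)$\Rightarrow$(e)$\Rightarrow$(d) are formal; and your (d)$\Rightarrow$(c) argument (restrict the separating linear system to $C$ and observe that a degree-$d$ line bundle on an elliptic curve cannot separate a length-$(d)$ scheme representing it) is the contrapositive of the argument the paper cites from \cite[Proposition 1.2]{MR1202811}. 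All of that is fine.

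Stage~3 is where you diverge from the paper, and where there is a genuine gap. The paper handles (c)$\Leftrightarrow$(g) by observing it \emph{is} the main theorem of Ito \cite{MR3923760} and K\"uronya--Lozovanu \cite{MR4009173}, and simply cites those results. You instead try to prove (g)$\Rightarrow$(c) directly by propagating the vanishings $H^1(X,\wedge^{i+1}M_L\otimes L)=0$ from $X$ to $C$. As written this is not a proof: you explicitly flag the $H^2$ error terms $H^2(X,\wedge^{k}M_L\otimes L(-C))$ as the ``main obstacle'' and then only gesture at handling them by ``Serre duality together with IT(0) and generic-vanishing machinery.'' Serre duality on the abelian surface turns these into $H^0(X,\wedge^{k}M_L^{\vee}\otimes L^{-1}(C))$, and $\wedge^{k}M_L^{\vee}$ is globally generated with ample determinant, so these groups are not obviously zero; a real argument is needed and is not supplied. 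Moreover, even granting those vanishings, you still have to pass from $\wedge^{i+1}M_L|_C$ to $\wedge^{i+1}M_{L|_C}$: the filtration induced by $0\to W\otimes\calo_C\to M_L|_C\to M_{L|_C}\to 0$ has graded pieces $\wedge^{j}M_{L|_C}\otimes\wedge^{i+1-j}W\otimes L|_C$ for every $j$, and extracting the $j=i+1$ vanishing from the vanishing of the full bundle requires controlling the lower pieces too. (A small side error: ``every big class on an abelian surface is pseudo-effective and hence nef'' --- pseudo-effective does not imply nef; the correct reason big implies nef, indeed ample, here is that an effective divisor on an abelian variety can be translated off any curve.) In short, Stage~3 replaces the paper's citation with an incomplete re-proof of part of the K\"uronya--Lozovanu/Ito theorem; you should either fill this gap in full or cite \cite{MR3923760}, \cite{MR4009173} for (c)$\Leftrightarrow$(g) as the paper does.
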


\begin{proof}
(a) $\Rightarrow$ (b) follows from $\beta(l) \geq \ep(X,l)^{-1}$.
(b) $\Rightarrow$ (c) follows from \ref{eq_seshadri_def1}.
(c) $\Rightarrow$ (a) follows from \ref{eq_bounds_surface}
and the assumption $(l^2) > 4/t^2$.

Assume $t=1/(p+2) $ for an integer $p \geq -1$.
(a) $\Rightarrow$ (f) follows from \cite[Theorem D]{CaucciThesis}.
As we already see,
(f) $\Rightarrow$ (e) and (e) $\Rightarrow$ (d) hold in general.

If there exists an elliptic curve $C \subset  X$ such that $(L.C) \leq t^{-1}=p+2$,
it is easy to find a subscheme $Z \subset C$ with $\mathrm{length} (\calo_Z )= p+2$
such that $ H^0(C, L|_C) \rightarrow H^0(Z,L|_Z)$ is not surjective
(see \cite[Proposition 1.2]{MR1202811} for example).
Then $ H^0(X, L) \rightarrow H^0(Z,L|_Z)$ is not surjective as well
and hence $L$ is not $(p+1)$-spanned.
Thus we have (d) $\Rightarrow$ (c)
and the equivalence of (a)-(f) follows.

If $p \geq 0$,
(c) $\Leftrightarrow$ (g) is nothing but the main result of \cite{MR4009173}, \cite{MR3923760}.
\end{proof}

We note that the most parts of the equivalence of (b)-(g) are already known. 
For example, (b) $\Leftrightarrow$ (c) follows from \cite[Theorem A.1 (b)]{MR1660259}. 
(c) $\Leftrightarrow$ (d)  $\Leftrightarrow$ (e) follows from \cite[Theorem 1.1]{MR1654705}.
As stated in the proof,
(c) $\Leftrightarrow$ (g) is proved in \cite{MR4009173}, \cite{MR3923760}.

\begin{rem}
We see that the assumption $(L^2) > 4/t^2$ is sharp at least for $t=1/(p+2)$ with $p=-1$ or $0$
as follows.

Let $N$ be an ample symmetric line bundle on a simple abelian surface $X$
such that $(N^2) =4$.
Since $h^0(N)=2$,
$N$ is not basepoint free 
and hence $\beta(n)=1$.

For $p=-1$ and $L=N$, it holds that $(L^2)=4 = 4/t^2$ since $t=1/(p+2)=1$.
In this case, (a), (d)-(f) do not hold. 
On the other hand,
(c) holds since $X$ is simple.

For $p=0$ and $L=2N$, it holds that $(L^2)=16 = 4/t^2$ since $t=1/(p+2)=1/2$.
In this case, $L$ is not projectively normal by \cite{MR966402}, 
\cite[Remark 3.6, 2)]{MR1974682},
and $\beta(l)=\beta(n)/2 =1/2$. 
Thus (a), (g) do not hold, 
but (c) holds since $X$ is simple.
\end{rem}

\section{An observation}
\label{sec_by_multiplier_ideal}

Higher syzygies on abelian surfaces and threefolds are studied in \cite{MR4009173}, \cite{MR3923760}, \cite{lozovanu2018singular}.
In these papers, the starting point is the following theorem in \cite{MR2833789}: 

\begin{thm}[\cite{MR2833789}]\label{thm_LPP}
Let $p$ be a non-negative integer,
$X$ be an abelian variety,
and $L$ be an ample line bundle on $X$.
Assume that there exists an effective 
$\Q$-divisor $F$ on $X$ such that
\begin{itemize}
\item[(i)] $\frac{1}{p+2}L-F$ is ample,
\item[(ii)] the multiplier ideal $\calj(X,F)$ coincides with the maximal ideal 
$\cali_o$ of the origin $o \in X$.
\end{itemize}
Then $L$ satisfies property $(N_p)$.
\end{thm}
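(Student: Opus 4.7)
The plan is to reduce the hypothesis to the bound $\beta(l) < \tfrac{1}{p+2}$ and then invoke \autoref{thm_ Bpf_threshold}(2), which already asserts that such a bound on the basepoint-freeness threshold implies property $(N_p)$ for any line bundle representing $l$. Thus the substantive work is to show that conditions (i) and (ii) force $\beta(l) < \tfrac{1}{p+2}$.

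First I would upgrade condition (i) from non-strict to strict membership in the ample cone. Since $\tfrac{1}{p+2}\,l - [F]$ is an ample class and the ample cone is open, for a sufficiently small rational $\epsilon>0$ the class $\bigl(\tfrac{1}{p+2}-\epsilon\bigr)l - [F]$ remains ample. Set $t := \tfrac{1}{p+2} - \epsilon$; the divisor $F$ still satisfies $\calj(X,F) = \cali_o$, and now $tl - F$ is ample. In the notation of the threshold $r(l)$ defined in \ref{eq_r(l)}, this yields $r(l) \leq t < \tfrac{1}{p+2}$.

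Next I would invoke Caucci's comparison $\beta(l) \leq r(l)$ from \cite[Proposition 1.4]{MR4114062}. The idea underlying that inequality is: given $F$ with $tl - F$ ample and $\calj(X,F) = \cali_o$, write $t = a/b$ and pull everything back along the multiplication-by-$b$ isogeny $\mu_b$. Since $\mu_b$ is \'etale, the multiplier ideal transforms as $\calj(X, \mu_b^* F) = \mu_b^* \cali_o$, and the $\Q$-line bundle $L^{ab} - \mu_b^* F$, being a translate of $b^2(tL - F)$, is ample. For each $\alpha \in \widehat{X}$, Nadel vanishing applied to $\mu_b^* F$ with the line bundle $L^{ab} \otimes P_\alpha$ gives
\[
H^i\bigl(X,\; \mu_b^* \cali_o \otimes L^{ab} \otimes P_\alpha\bigr) = 0 \qquad \text{for all } i>0.
\]
This is exactly the IT(0) condition for the $\Q$-twisted sheaf $\cali_o\langle tl\rangle$, so by \autoref{lem_beta_IT(0)} we obtain $\beta(l) \leq t < \tfrac{1}{p+2}$. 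Applying \autoref{thm_ Bpf_threshold}(2) then yields that $L$ satisfies $(N_p)$.

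The main obstacle is the second step: translating the multiplier-ideal hypothesis into an IT(0) statement for the twisted ideal sheaf $\cali_o\langle tl\rangle$. The isogeny trick of pulling back by $\mu_b$ is what removes the fractional twist and reduces the problem to a single application of Nadel vanishing with a genuine line bundle, but one must verify that pullback along an \'etale isogeny preserves the multiplier ideal in the needed form and that twisting by numerically trivial $P_\alpha$'s preserves the ample/nef-and-big residual class uniformly in $\alpha$. Once these technical points are in place, the three steps above chain together formally via the machinery of Section~\ref{sec_preliminary}.
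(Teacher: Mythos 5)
The paper does not prove \autoref{thm_LPP}: it is cited verbatim from Lazarsfeld--Pareschi--Popa \cite{MR2833789}, so there is no internal proof to compare against. That said, your argument is logically correct, and it is worth flagging that the route is anachronistic. The two results you invoke as black boxes --- \autoref{thm_ Bpf_threshold}(2) and the inequality $\beta(l)\leq r(l)$ from \cite[Proposition~1.4]{MR4114062} --- are Caucci's 2020 reformulations that were built on top of, and generalize, the 2011 LPP theorem you set out to prove. The chain is not circular (Caucci's proof of \autoref{thm_ Bpf_threshold}(2) proceeds through cohomological rank functions and the Koszul machinery of Ein--Lazarsfeld \cite{MR1193597}, not through \autoref{thm_LPP}), but you are effectively re-deriving an ancestor from its own descendants. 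What you sketch for the substantive step --- pullback along $\mu_b$, the identity $\calj(X,\mu_b^*F)=\mu_b^*\cali_o$ from \'etaleness, then Nadel vanishing for each twist by $P_\alpha$ to obtain IT(0) of $\cali_o\langle tl\rangle$ --- is exactly the argument the paper develops in \autoref{prop_local_enough'} and specializes in \autoref{prop_local_enough}; the latter in fact strengthens \autoref{thm_LPP} by replacing the global hypothesis $\calj(X,F)=\cali_o$ with the local one that $o$ merely be an isolated point of $\Nklt(X,F)$. One small slip: \autoref{lem_beta_IT(0)} yields the strict inequality $\beta(l)<t$, not $\beta(l)\leq t$ as you wrote, though this does not affect the conclusion. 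The original LPP proof is more direct and elementary: it applies Nadel vanishing straight to the bundles $\bigwedge^{p+1}M_L\otimes L^q$ without ever introducing the basepoint-freeness threshold.
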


Note that we may replace (ii) by $\Nklt(X,F) =\{o\}$ as in \cite[Remark 2.1]{MR3923760}.
Hence \autoref{thm_LPP} closely resembles the following well-known fact which is used in the study of Fujita's basepoint freeness conjecture
 in \cite{MR1207013}, \cite{MR1233485}, \cite{MR1455517}, \cite{MR1457742}, \cite{MR4108218},  etc.

\begin{prop}\label{prop_adjoint_ Bpf}
Let $L$ be an ample line bundle on a smooth projective variety $X$
and $x \in X$ be a point.
If there exists an effective $\Q$-divisor $F$ on $X$
such that $ L-F$ is ample and $x$ is an isolated point of $\Nklt(X,F)$,
then $K_X+L$ is basepoint free at $x$.
\end{prop}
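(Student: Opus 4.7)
The plan is to apply Nadel vanishing to the multiplier ideal $\calJ(X,F)$ and then use the isolatedness of $x$ in $\Nklt(X,F)$ to extract a section of $K_X+L$ not vanishing at $x$.

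First, I would fix the multiplier ideal $\calJ=\calJ(X,F)$ and recall that $\Supp(\calO_X/\calJ)=\Nklt(X,F)$. By hypothesis $x$ is an isolated point of this locus, so there is a Zariski-open neighborhood $U$ of $x$ in which $\Nklt(X,F)\cap U=\{x\}$. Consequently the cosupport of $\calJ|_U$ is the single point $x$, and so the quotient $\calO_X/\calJ$ decomposes, on a suitable neighborhood, as the direct sum of a skyscraper sheaf $\calQ_x$ supported at $x$ and another summand $\calQ'$ whose support does not contain $x$.

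Next I would write down the short exact sequence
\[
0 \rightarrow \calJ\otimes\calO_X(K_X+L) \rightarrow \calO_X(K_X+L) \rightarrow \calO_X(K_X+L)\otimes\calO_X/\calJ \rightarrow 0
\]
and take the associated long exact sequence in cohomology. The Nadel vanishing theorem, applicable precisely because $L-F$ is ample, yields
\[
H^1\bigl(X,\calJ\otimes\calO_X(K_X+L)\bigr)=0,
\]
so the restriction map
\[
H^0\bigl(X,\calO_X(K_X+L)\bigr) \longrightarrow H^0\bigl(X,\calO_X(K_X+L)\otimes\calO_X/\calJ\bigr)
\]
is surjective.

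Finally I would exploit the isolatedness. Using the decomposition above, the target of the restriction map contains, as a direct summand, the finite-dimensional space $H^0(X,\calO_X(K_X+L)\otimes\calQ_x)$, which is nonzero since $\calQ_x$ is a nonzero skyscraper at $x$ and $\calO_X(K_X+L)$ is locally free. Picking any element supported only in the $\calQ_x$-summand and whose value at the closed point of $\calQ_x$ is nonzero, surjectivity produces a global section $s\in H^0(X,\calO_X(K_X+L))$ with $s(x)\neq 0$, i.e. $K_X+L$ is basepoint free at $x$. No step should be a genuine obstacle; the only mild care is in writing the splitting of $\calO_X/\calJ$ near $x$, for which it suffices to work on the open neighborhood $U$ where $x$ is the unique cosupport point and to use that the summand $\calQ'$ contributes nothing to the value of a section at $x$.
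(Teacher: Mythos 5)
Your proposal is correct and matches the standard argument; the paper does not write out a proof for this proposition (it cites Lazarsfeld, [10.4.C] of \cite{MR2095472}), but the proof it does give for \autoref{prop_local_enough'}, which it describes as ``essentially the same,'' has exactly the same structure as yours: Nadel vanishing gives surjectivity onto $H^0$ of the tensor product with $\calo_X/\calj(X,F)$, and the isolatedness of $x$ in $\Nklt(X,F)$ lets one split $\calo_X/\calj(X,F)$ and project onto the skyscraper at $x$.

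One small point of precision: the decomposition $\calo_X/\calj(X,F)\cong\calq_x\oplus\calq'$ is in fact global, not merely on a neighborhood of $x$, because $\Nklt(X,F)$ is the disjoint union of the closed sets $\{x\}$ and $\Nklt(X,F)\setminus\{x\}$; the global splitting is what you actually need (and implicitly use) when you pass to the direct-sum decomposition of $H^0\bigl(X,\calo_X(K_X+L)\otimes\calo_X/\calj(X,F)\bigr)$ and pick off a class in the $\calq_x$-summand. With that clarified, the argument is complete.
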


A difference between \autoref{thm_LPP} and \autoref{prop_adjoint_ Bpf} is 
that the former assumes the equality $\Nklt(X,F)=\{o\}$ on \emph{whole} $X$
but the latter assumes $\Nklt(X,F)=\{x\}$ \emph{locally}.
As stated in \cite[Section 7]{lozovanu2018singular},
this difference makes the construction of $F$ in \autoref{thm_LPP} harder even in the case $\dim X=3$.
To construct a divisor $F$ in \autoref{thm_LPP} on abelian threefolds,
Lozovanu \cite{lozovanu2018singular} gives a detailed analysis on the restricted volume $\mathrm{vol}_{X'|E} ( \pi^* L -t E)$
using (infinitesimal) Newton-Okounkov bodies and Seshadri constants,
where $\pi : X' \rightarrow X$ is the blow-up at the origin and $E$ is the exceptional divisor.

\vspace{2mm}

An easy but important observation is the following proposition,
whose proof is essentially the same as that of \autoref{prop_adjoint_ Bpf} 
(see \cite[10.4.C]{MR2095472} for example).

\begin{prop}\label{prop_local_enough'}
Let $(X,l)$ be a polarized abelian variety and $t >0$ be a positive rational number.
Let $S \subset X$ be a finite set
and assume that there exists
an effective  $\Q$-divisor $F$ on $X$ such that
\begin{itemize}
\item[(i)] $t l-F$ is ample,
\item[(ii)] each point in $S$
is an isolated point of the non-klt locus $\Nklt(X,F)$.
\end{itemize}
If an ideal sheaf $\cali$ on $X$ satisfies $\calj(X,F) \subset \cali$ and $ \Supp \calo_X / \cali \subset S $,
then $\cali \langle tl \rangle$ is IT(0).
In particular,
$\cali_S \langle tl \rangle$ is IT(0),
where $\cali_S$ is the ideal sheaf of $S$ with the reduced scheme structure.
\end{prop}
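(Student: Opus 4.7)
The plan is to mimic the classical Nadel vanishing proof of \autoref{prop_adjoint_ Bpf}, passing to the cover $\mu_b \colon X \to X$ to handle the $\Q$-twisted formulation. The isolation hypothesis on $S$ plays the same role as in the Fujita-type argument: it lets us discard the remainder $W_0 := \Nklt(X, F) \setminus S$ of the non-klt locus.

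First I would write $t = a/b$ with $b > 0$ and pull $F$ back to $F_b := \mu_b^* F$. Since $\mu_b$ is étale, $\calj(X, F_b) = \mu_b^* \calj(X, F)$, and $\mu_b^*(tl - F) = ab\, l - F_b$, so $L^{ab}(-F_b) \otimes P_\alpha$ is ample for every $\alpha \in \widehat{X}$, where $L$ represents $l$. Nadel vanishing then gives that $\mu_b^* \calj(X, F) \otimes L^{ab}$ is IT(0). The task is to upgrade this from $\mu_b^* \calj(X, F)$ to $\mu_b^* \cali$.

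Next I would use the isolation hypothesis. The set $\mu_b^{-1}(\Nklt(X, F)) = \Nklt(X, F_b)$ still has $S' := \mu_b^{-1}(S)$ as a union of isolated points, disjoint from the closed set $W_0' := \mu_b^{-1}(W_0)$. A primary decomposition argument — equivalently, the fact that a coherent sheaf supported on a disjoint union of closed subsets splits as a direct sum — yields $\calj(X, F_b) = \calj_{S'} \cap \calj_{W_0'}$ with $V(\calj_{S'}) = S'$, $V(\calj_{W_0'}) = W_0'$, and $\calj_{S'} + \calj_{W_0'} = \calo_X$. This produces a short exact sequence
\[
0 \to \calj(X, F_b) \to \calj_{S'} \oplus \calj_{W_0'} \to \calo_X \to 0.
\]
Twisting by $L^{ab} \otimes P_\alpha$ and using Nadel vanishing on the left together with Kodaira vanishing on the right (both applied to the ample line bundle $L^{ab} \otimes P_\alpha$ on the abelian variety $X$), a chase through the long exact sequence shows that $\calj_{S'} \otimes L^{ab}$ is IT(0).

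Finally, $\calj_{S'} \subset \mu_b^* \cali$ — this holds near $S'$ because $\calj(X, F) \subset \cali$ and $\calj_{S'}$ agrees with $\calj(X, F_b)$ there, and away from $S'$ because $\mu_b^* \cali = \calo_X$ there by the support assumption — and the quotient $\mu_b^* \cali / \calj_{S'}$ is supported on the finite set $S'$. One more short exact sequence then upgrades the IT(0) property from $\calj_{S'}$ to $\mu_b^* \cali$, which is exactly $\cali \langle tl \rangle$ being IT(0). The last assertion follows by taking $\cali = \cali_S$: the inclusion $\calj(X, F) \subset \cali_S$ holds stalkwise, since at each $p \in S \subset \Nklt(X, F)$ the multiplier ideal is contained in $\mathfrak{m}_p$. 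I expect the technical heart to be the comaximal decomposition $\calj(X, F_b) = \calj_{S'} \cap \calj_{W_0'}$; this is precisely where the isolation hypothesis on $S$ is genuinely used, but once it is set up, the rest reduces to a pair of standard long exact sequence chases.
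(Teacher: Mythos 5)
Your proof is correct and follows essentially the same route as the paper's: write $t=a/b$, pull back by $\mu_b$, apply Nadel vanishing, split the multiplier ideal using the isolation of $S$ into a piece cosupported near $S$ and a piece disjoint from it, and finally pass to $\cali$ via the skyscraper quotient. The only cosmetic differences are that you perform the decomposition on the cover and package the splitting step as a Mayer--Vietoris short exact sequence with an $H^{i}$-vanishing chase, whereas the paper decomposes $\calj(X,F)=\calj\cap\calj'$ on the base and phrases the same step as surjectivity of the $H^{0}$ restriction map onto a direct summand of $\calo_{X}/\mu_b^{*}\calj(X,F)$; these are equivalent bookkeeping choices.
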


\begin{proof}
Let $t=\frac{a}{b}$.
Since $ \mu_b^* \calj(X,F)  = \calj(X, \mu_b^* F)$ and $ab L -  \mu_b^* F \equiv  ab L -b^2 F \equiv b^2 (t l -F)$ is ample,
$h^1(L^{ab } \otimes  \mu_b^* \calj(X,F) \otimes P_{\alpha}) = 0$ for any $\alpha \in \widehat{X}$ by the Nadel vanishing theorem.
Hence the natural map
\[
H^0 (L^{ab } \otimes P_{\alpha}) \rightarrow H^0 (L^{ab } \otimes P_{\alpha} \otimes \calo_X/  \mu_b^* \calj(X,F))
\]
is surjective.

By (ii), there exists an ideal sheaves $\calj, \calj' $ on $X$ such that 
$\calj(X,F) = \calj \cap \calj'$, $\Supp \calo_X/ \calj =S$ and $S \cap \Supp \calo_X / \calj' =\emptyset$.
Then we have
\[
\calo_X/  \mu_b^* \calj(X,F ) =\calo_X/ \mu_b^* (\calj  \cap \calj' ) = \calo_X/ \mu_b^* \calj \oplus \calo_X/ \mu_b^* \calj' 
\]
since $\Supp \calo_X/ \mu_b^* \calj \cap \Supp \calo_X/ \mu_b^* \calj' = \emptyset$.
Thus 
\[
H^0 (L^{ab } \otimes P_{\alpha} \otimes \calo_X/  \mu_b^* \calj(X,F)) 
=H^0 (L^{ab } \otimes P_{\alpha} \otimes \calo_X/  \mu_b^* \calj)  \oplus H^0 (L^{ab } \otimes P_{\alpha} \otimes \calo_X/ \mu_b^* \calj' )
\]
and hence the natural map
\[
 H^0 (L^{ab } \otimes P_{\alpha}) \rightarrow H^0 (L^{ab } \otimes P_{\alpha} \otimes \calo_X/  \mu_b^* \calj ) 
 \]
 is surjective as well.

 The assumption on $\cali$ is equivalent to $\calj \subset \cali$.
 Since $\calo_X/  \mu_b^* \calj$ is a skyscraper sheaf,
 the natural map
 \[
 H^0 (L^{ab } \otimes P_{\alpha} \otimes \calo_X/  \mu_b^* \calj )  \rightarrow 
 H^0 ( L^{ab } \otimes P_{\alpha} \otimes \calo_X/  \mu_b^* \cali ) 
 \]
  is also surjective.
Thus $ H^0 (L^{ab } \otimes P_{\alpha}) \rightarrow  H^0 ( L^{ab}  \otimes P_{\alpha} \otimes \calo_X/  \mu_{b}^* \cali) $ is surjective
and hence $H^1( \mu_{b}^* \cali \otimes L^{ab } \otimes P_{\alpha}  ) =0$ by $H^1(L^{ab } \otimes P_{\alpha})=0$.
Since $\alpha $ can be any point in $\widehat{X}$, $ \mu_{b}^* \cali \otimes L^{ab }$ is IT(0).
Thus $\cali \langle tl \rangle$ is IT(0).

The last statement follows from $\calj(X,F) \subset \cali_S $ and $\Supp \calo_X /\cali_S =S$.
\end{proof}

As a special case,  we obtain the following colollary

\begin{cor}\label{prop_local_enough}
Let $(X,l)$ be a polarized abelian variety and $t >0$ be a positive rational number.
Assume that there exists an effective 
$\Q$-divisor $F$ on $X$ such that
\begin{itemize}
\item[(i)] $tl-F$ is ample,
\item[(ii)] $o \in X$ is an isolated point of $\Nklt(X,F)$.
\end{itemize}
Then $\beta(l) < t$.
In particular, any $L$ representing $l$  satisfies $(N_p)$ if $t \leq \frac{1}{p+2}$.
\end{cor}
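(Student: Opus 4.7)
The plan is to deduce this corollary directly from the immediately preceding \autoref{prop_local_enough'} by specializing to the singleton $S=\{o\}$. First I would observe that taking $S=\{o\}$ turns hypotheses (i) and (ii) of the corollary into precisely hypotheses (i) and (ii) of \autoref{prop_local_enough'}: the set $\{o\}$ is finite, and saying that $o$ is an isolated point of $\Nklt(X,F)$ is the same as saying that every point of $S=\{o\}$ is isolated in $\Nklt(X,F)$. The ``in particular'' clause of \autoref{prop_local_enough'} (applied with $\cali=\cali_S=\cali_o$) then yields that the $\Q$-twisted ideal sheaf $\cali_o\langle tl\rangle$ is IT(0).

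Next I would invoke \autoref{lem_beta_IT(0)}, which tells us that $\beta(l)<t$ is equivalent to $\cali_p\langle tl\rangle$ being IT(0) for some (equivalently, for any) closed point $p\in X$. Choosing $p=o$ and using the IT(0) property obtained in the previous step gives $\beta(l)<t$, which is the main conclusion.

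For the ``in particular'' statement, I would simply observe that if $t\leq\tfrac{1}{p+2}$ then $\beta(l)<t\leq\tfrac{1}{p+2}$, so \autoref{thm_ Bpf_threshold}(2) immediately implies that every line bundle $L$ representing $l$ satisfies property $(N_p)$.

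There is no serious obstacle here: all the real work — namely, turning the local klt-isolation hypothesis at $o$ into the IT(0) property of $\cali_o\langle tl\rangle$ via the Nadel vanishing theorem applied after pullback by the multiplication-by-$b$ isogeny — has already been carried out inside the proof of \autoref{prop_local_enough'}. The present corollary is essentially a repackaging of the $S=\{o\}$ case, combined with the dictionaries provided by \autoref{lem_beta_IT(0)} (IT(0) of $\cali_p\langle tl\rangle$ $\Leftrightarrow$ $\beta(l)<t$) and \autoref{thm_ Bpf_threshold}(2) (threshold bound $\Rightarrow$ property $(N_p)$).
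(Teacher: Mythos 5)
Your proof is correct and is essentially identical to the paper's: both specialize \autoref{prop_local_enough'} to $S=\{o\}$ to get that $\cali_o\langle tl\rangle$ is IT(0), then convert via \autoref{lem_beta_IT(0)} to $\beta(l)<t$, and the $(N_p)$ claim follows from \autoref{thm_ Bpf_threshold}(2).
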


\begin{proof}
Recall that $\beta(l) < t$ if and only if  $\cali_o \langle tl \rangle$ is IT(0).
Hence this corollary is a special case $S=\{o\}$ of \autoref{prop_local_enough'}.
\end{proof}

We might regard the condition $\beta(l) < t$, which is equivalent to $ \beta(tl) <1$,
as ``the basepoint freeness of the $\Q $-line bundle $t L$'' by \autoref{thm_ Bpf_threshold}.
Since $tL=K_X+tL$ for an abelian variety, 
\autoref{prop_local_enough} can be regarded as a $\Q$-line bundle case of \autoref{prop_adjoint_ Bpf} on abelian varieties.

\vspace{2mm}

Some techniques to construct $F$ as in \autoref{prop_adjoint_ Bpf} 
are developed in the study of Fujita's conjecture.
The basic strategy is to cut down minimal lc centers
so that the centers become zero-dimensional. 
The strategy and techniques to cut down lc centers are used in \cite{MR3923760}, \cite{lozovanu2018singular}
to construct $F$ as in \autoref{thm_LPP}.
However we can use them more directly 
due to \autoref{prop_local_enough}
since we only need to consider the singularities of $F$ locally as in \autoref{prop_adjoint_ Bpf}.
Recall a threshold $r(l)$ in \ref{eq_r(l)}:
\begin{align*}
r(l) =\inf \{ t \in \Q \, | \, & \text{ there exists an effective 
$\Q$-divisor $F$ on $X$ such that}\\
& \hspace{40mm} \text{$tl -F$ is ample and } \calj(X,F)=\cali_o  \}.
\end{align*}
We define a modified threshold:

\begin{defn}\label{def_condition}
Let $(X,l)$ be a polarized abelian variety.
We define 
\begin{align*}
r'(l) =\inf \{ t \in \Q \, | \, & \text{ there exists an effective 
$\Q$-divisor $F$ on $X$ such that} \\
& \hspace{15mm} \text{$tl-F$ is ample and $\Nklt(X,F)$ has an isolated point}  \}.
\end{align*}
\end{defn}

\begin{rem}\label{rem_not_minimum}
If there exists an effective 
$\Q$-divisor $F$ on $X$ such that $tl-F$ is ample and $\Nklt(X,F)$ has an isolated point,
then $r'(l) \leq t$ by definition.
In fact, we have $r'(l) < t$ since $t'l -F$ is still ample if $0 < t-t' \ll 1$.
\end{rem}

The following corollary is a restatement of  \autoref{prop_local_enough}.

\begin{cor}\label{cor_epsilon_rho}
Let $(X,l)$ be a polarized abelian variety.
Then $\beta(l) \leq r'(l) \leq r(l)$.
\end{cor}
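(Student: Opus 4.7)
The claim $\beta(l) \leq r'(l) \leq r(l)$ breaks into two inequalities of rather different character, and my plan is to handle them separately.

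For the second inequality $r'(l) \leq r(l)$, the plan is to observe that the condition defining $r(l)$ is strictly stronger than the one defining $r'(l)$. Indeed, suppose $t$ is admissible for $r(l)$, so there exists an effective $\Q$-divisor $F$ with $tl - F$ ample and $\calj(X,F) = \cali_o$. Since $\Nklt(X,F) = \Supp(\calo_X/\calj(X,F)) = \{o\}$, the point $o$ is trivially an isolated point of $\Nklt(X,F)$, so $F$ also witnesses the condition defining $r'(l)$ at the value $t$. Passing to infima gives $r'(l) \leq r(l)$.

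For the first inequality $\beta(l) \leq r'(l)$, the plan is to apply \autoref{prop_local_enough}, after a translation argument to move an isolated non-klt point to the origin. Fix any rational $t > r'(l)$; by definition there is an effective $\Q$-divisor $F$ on $X$ such that $tl - F$ is ample and some point $p \in X$ is an isolated point of $\Nklt(X, F)$. Let $t_{-p} : X \to X$ denote the translation by $-p$, and set $F' := t_{-p}^* F$. Since translations act trivially on the numerical class group $N^1(X)$, we have $tl - F' \equiv tl - F$, so $tl - F'$ is still ample; moreover $F'$ is effective, and $o = t_{-p}(p)$ is an isolated point of $\Nklt(X, F') = t_{-p}^{-1}\bigl(\Nklt(X,F)\bigr)$. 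Thus \autoref{prop_local_enough} applies to $F'$ and yields $\beta(l) < t$. Since this holds for every rational $t > r'(l)$, we conclude $\beta(l) \leq r'(l)$.

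Neither step presents a real obstacle: the harder direction is the first, and the only nontrivial point there is noticing that $r'(l)$ only records the existence of an isolated non-klt point somewhere, whereas \autoref{prop_local_enough} is stated with the isolated point fixed at the origin — this gap is closed by the translation invariance of the numerical class of $F$ on an abelian variety. Once this translation is performed, the corollary is simply a bookkeeping restatement of the two earlier results.
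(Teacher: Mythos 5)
Your proof is correct and follows essentially the same route as the paper: where the paper applies \autoref{prop_local_enough'} directly with $S=\{p\}$ and invokes the point-independence built into \autoref{lem_beta_IT(0)}, you instead translate $F$ and apply \autoref{prop_local_enough}; both steps encode the homogeneity of the abelian variety. One small bookkeeping slip in the translation: with the paper's convention $t_q(x)=x+q$ one has $\Nklt(X,t_{-p}^*F)=t_{-p}^{-1}(\Nklt(X,F))=\Nklt(X,F)+p$, which contains $2p$ rather than $o$; you want $F'=t_p^*F$, giving $\Nklt(X,F')=\Nklt(X,F)-p \ni o$. This is only a sign typo and does not affect the validity of the argument.
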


\begin{proof}
We obtain the first inequality  by applying \autoref{prop_local_enough'} to $S=\{p\}$,
where $p$ is an isolated point of $\Nklt(X,F)$.
The second one follows from the definitions of $ r'(l)$ and $r(l)$.
\end{proof}

By Corollaries \ref{prop_local_enough}, \ref{cor_epsilon_rho} and results in the study of Fujita's conjecture, 
we obtain the following proposition,
though we do not use this in the proof of \autoref{main_thm}:

\begin{prop}\label{lem_rho_dim2}
Let $(X,l)$ be a polarized abelian variety and set $B_t=tl$ for $t >0$.
\begin{enumerate}
\item The case $\dim X=1$:  $\beta(l) = r'(l) = r(l) =(\deg l )^{-1}$.
\item The case $\dim X=2$ (\cite{MR4114062}): 
If $(B_{t}^2) >4$ and $(B_{t}.C) >1$ for any elliptic curve  $C \subset X$,
then $\beta(l) \leq r'(l) \leq r(l) < t$.
\item The case $\dim X=3$:
Assume 
 \begin{itemize}
\item $(B_{t}^3) > 27$,
\item $(B_{t}^2.S) \geq 9$ for any surface $S \subset X$, 
\item $(B_{t}.C) \geq 3$ for any curve $C \subset X$. 
\end{itemize}
Then $\beta(l) \leq r'(l) < t$.
\item The case $\dim X=4$:
Assume 
\[
(B_{t}^{\dim Z}.Z) \geq 5^{\dim Z}
\]
for any subvariety $Z \subset X$.
Then $\beta(l) \leq r'(l) < t$.
\item In any dimension:
Assume
\[
(B_{t}^{\dim Z}.Z) > \binom{\dim X+1}{2}^{\dim Z}
\]
for any subvariety $Z \subset X$.
Then $\beta(l) \leq r'(l) < t$.
\end{enumerate}
\end{prop}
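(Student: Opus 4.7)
The plan is to reduce each statement to Corollary \ref{cor_epsilon_rho}, whose conclusion $\beta(l)\le r'(l)<t$ follows once one constructs, for every rational $t$ exceeding the stated bound, an effective $\Q$-divisor $F$ on $X$ with $tl-F$ ample and some isolated point in $\Nklt(X,F)$. Parts (3)--(5) will then follow from the classical tie-breaking/cut-down strategy from the study of Fujita's conjecture.

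For part (1), take $F=o$ on the elliptic curve: then $\Nklt(X,F)=\{o\}$ and $tl-o$ is ample iff $t\deg l>1$, which combined with Remark \ref{rem_curve} and $\beta(l)\le r'(l)\le r(l)$ forces equality throughout. Part (2) is \cite[Proposition 3.1]{MR3923760} (recorded in Proposition \ref{prop_bounds_surface}) together with $r'(l)\le r(l)$.

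The substance lies in (3)--(5). The first step uses Riemann--Roch on the abelian variety: whenever $(B_t^n)>n^n$ with $n=\dim X$, an effective $\Q$-divisor $D_0\equiv B_t$ with $\mult_oD_0>n$ exists. For $n=3$ this is $(B_t^3)>27$, and for $n=4$ one uses $5^4>4^4$. Setting $c_0:=\lct_o(X,D_0)<1$ and applying Lemma \ref{lem_perturbation} produces an lc pair with a unique minimal lc center $Z_0$ through $o$, normal at $o$ by Theorem \ref{thm_minimal _center}. If $\dim Z_0=0$ we are done (after one more perturbation); otherwise Lemma \ref{lem_cut_center_curve} yields $D_1\equiv c_1B_t$ with $c_0<c_1<1$ whose minimal lc center at $o$ is strictly contained in $Z_0$, as long as the numerical estimate $(B_t^{\dim Z_0}\cdot Z_0)\ge n^{\dim Z_0}\mult_oZ_0$ holds. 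Iterating until the minimal lc center drops to $\{o\}$ delivers the required $F$.

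The main obstacle is verifying this cut-down hypothesis at every stage. In dimension three the intermediate minimal lc centers $Z_0$ are normal at $o$ and hence smooth there (since $\dim Z_0\le 2$ and $o$ is a smooth point of $X$), so $\mult_oZ_0=1$ and the assumptions $(B_t^2\cdot S)\ge 9$ and $(B_t\cdot C)\ge 3$ fit exactly (see Remark \ref{rem_cut_center_curve} for the curve case). In dimension four the intermediate minimal lc center may fail to be smooth at $o$, so one must absorb a possibly nontrivial $\mult_oZ_0$ into the constant, which is why the hypothesis in (4) strengthens to $5^{\dim Z}$. Part (5) then follows in arbitrary dimension by replacing $n$ with the Angehrn--Siu bound $\binom{n+1}{2}$ in the cut-down estimate, as in \cite{MR1393263}.
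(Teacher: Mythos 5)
Your overall plan — reduce to Corollary \ref{cor_epsilon_rho} and then run a cut-down-of-lc-centers argument in the style of the Fujita-conjecture literature — is exactly the route the paper takes: parts (1) and (2) match, and for (3)--(5) the paper simply invokes Helmke's \cite[Example 4.6]{MR1686947}, Kawamata's \cite[Theorem 4.1]{MR1457742}, and Koll\'ar's algebraic proof \cite{MR1492525} of Angehrn--Siu to produce the divisor $D$ with $o$ as minimal lc center, then perturbs by Lemma \ref{lem_perturbation}. So you have the right architecture.

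However, your sketch of the cut-down iteration has a genuine gap in dimension three. You assert that the intermediate minimal lc center $Z_0$ is ``normal at $o$ and hence smooth there (since $\dim Z_0 \le 2$ and $o$ is a smooth point of $X$), so $\mult_o Z_0 = 1$.'' Normality of a surface does \emph{not} imply smoothness: a rational double point, for instance, is normal but has multiplicity $2$. So when $Z_0$ is a surface you cannot conclude $\mult_o Z_0 = 1$ from Theorem \ref{thm_minimal _center}, and then the hypothesis $(B_t^{\dim Z_0}\cdot Z_0)\ge 3^{\dim Z_0}\mult_o Z_0$ of Lemma \ref{lem_cut_center_curve} may fail even when $(B_t^2\cdot S)\ge 9$. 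Controlling $\mult_o Z_0$ for higher-dimensional lc centers is precisely the delicate point that Helmke and Kawamata handle with refined local estimates, which is why the paper cites them rather than iterating Lemma \ref{lem_cut_center_curve} directly. A second, smaller issue: Lemma \ref{lem_cut_center_curve} produces a new pair $(X,D_1)$, but nothing guarantees that $\mult_o D_1 > n$, so you cannot feed $D_1$ back into that same lemma to ``iterate until the center drops to $\{o\}$'' without a further argument; the cited literature addresses the whole descent, not just one step. (Also, the reference you give for the Angehrn--Siu bound, \cite{MR1393263}, is Nakamaye on Seshadri constants; the intended references are \cite{MR1358978} and \cite{MR1492525}.)
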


\begin{proof}
(1) We have $\beta(l) = (\deg l)^{-1}$ by \autoref{rem_curve}.
On the other hand, $r(l) = (\deg l)^{-1}$ can be checked from the definition.
Hence (1) follows from \autoref{cor_epsilon_rho}.\\
(2) The inequality $r(t) < t$ follows from \autoref{prop_bounds_surface}.
In fact,
this is proved in \cite{MR4114062} and we use his argument in the proof of \autoref{prop_bounds_surface}.\\
(3) \cite[Example 4.6]{MR1686947} shows that the numerical conditions in (3) imply the existence of 
an effective $\Q$-divisor $D \equiv cB_{t}$ on $X$ for some $0 < c <1$
such that $o$ is the minimal lc center of $(X,D)$.
Perturbing $D$ slightly by \autoref{lem_perturbation},
we obtain $F$ as in the definition of $r'(l)$. 
Hence we have $r'(l) < t$.\\
For (4) and (5), the same proof works if we replace \cite[Example 4.6]{MR1686947}
with the proof of \cite[Theorem 4.1]{MR1457742} and an algebraic proof \cite{MR1492525} of Angehrn and Siu's theorem \cite{MR1358978}
respectively.
\end{proof}

Other than \cite{MR1686947}, \cite{MR1457742}, \cite{MR1492525},
we could apply results in \cite{MR1911209}, \cite{MR4108218}, etc.
We note that
we assume lower bounds of $(B_{t}^{\dim Z}.Z)$ for \emph{all subvarieties} $Z$ in \autoref{lem_rho_dim2} contrary to \autoref{ques2}.

\section{Induction argument to abelian subvarieties}\label{section_induction}

Inspired by works \cite{MR2964474}, \cite{MR2833789}, \cite{MR4009173} on $(N_p)$ of polarized abelian varieties
and a generalization of Fujita's conjecture \cite{MR1492525},
the author asks the following question.

\begin{ques}[{\cite[Question 4.2]{MR3923760}}]\label{ques1}
Let $p$ be a non-negative integer,
$X$ be an abelian variety,
and $L$ be an ample line bundle on $X$.
Set $B=\frac{1}{p+2} L$.
Assume that $(B^{\dim Z} .Z) > (\dim Z)^{{\dim Z}}$ for any abelian subvariety $Z \subset X$ with $\dim Z \geq 1$.
Then does $L$ satisfy property $(N_p)$?
\end{ques}

From \autoref{thm_ Bpf_threshold} and \autoref{ques1},
Caucci asks the following question:

\begin{ques}[$=$ \autoref{ques2}]\label{ques2'}
Let $(X,l)$ be a polarized abelian variety.
Set $B_t=t l$ for a positive rational number $t > 0$.
Assume that $(B_t^{\dim Z} .Z) > (\dim Z)^{{\dim Z}}$ for any abelian subvariety $Z \subset X$ with $\dim Z \geq 1$.
Then does it hold that  $\beta(l) < t$?
\end{ques}

As we see in the previous section,
techniques and results to cut down lc centers are quite useful to study these questions.
However, we still have some difficulty to answer \autoref{ques2} 
even in dimension three. 
A generalization of Fujita's conjecture \cite[5.4 Conjecture]{MR1492525}
claims that an adjoint line bundle $K_X+L$ on a smooth projective variety $X$ is basepoint free if
$(L^{\dim X} ) > (\dim X)^{\dim X}$ and $(L^{\dim Z}.Z ) \geq (\dim X)^{\dim Z}$ for all subvarieties $Z \varsubsetneq X$.
Between \autoref{ques2} and \cite[5.4 Conjecture]{MR1492525},
there are the following two differences which cause some problems when we use \autoref{prop_local_enough} to answer \autoref{ques2}:
\begin{itemize}
\item[(a)] The lower bound $(\dim Z)^{{\dim Z}}$ in \autoref{ques2} is smaller than
the bound $ (\dim X)^{\dim Z}$ in \cite[5.4 Conjecture]{MR1492525}
for an abelian subvariety $Z \varsubsetneq X$.
\item[(b)] In \autoref{ques2}, we do not assume any lower bounds of $(B_t^{\dim Z}.Z) $ if $Z \varsubsetneq X$ is not an abelian subvariety.
\end{itemize}


For (a), 
we consider the following question.

\begin{ques}\label{conj1}
For an ample $\Q$-divisor $B$ on a $g$-dimensional abelian variety $X$ with $(B^g) > g^g$,
does there exist an effective $\Q$-divisor $F $ on  $X$
such that $B-F$ is nef and $\Nklt(X,F)$ contains an abelian subvariety of $X$
as an irreducible component?
\end{ques}

\autoref{conj1} is a weak version of 
a conjecture \cite[Conjecture 7.2]{lozovanu2018singular} by Lozovanu,
which requires $\Nklt(X,F)$ to be an abelian subvariety.
He proves his conjecture for threefolds under the assumption $(B^3) > 40$.

Although  Lozovanu  does not give a direct relation between his conjecture and \autoref{ques1},
he suggests that this kind of statement would be useful in induction type arguments.
Following his suggestion,
we  reduce \autoref{ques2} to \autoref{conj1}:

\begin{thm}\label{prop_conj1_ques3}
Let $g $ be a positive integer.
An affirmative answer to \autoref{conj1} for $\dim X \leq g$ implies that to \autoref{ques2} for $\dim X \leq g$.
\end{thm}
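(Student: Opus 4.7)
The plan is to proceed by induction on $g = \dim X$. The base case $g = 1$ is immediate from \autoref{rem_curve}, since $\beta(l) = (\deg l)^{-1}$ and the hypothesis $(B_t \cdot X) > 1$ yields $\beta(l) < t$.

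For the inductive step, I assume that \autoref{ques2} holds in all dimensions less than $g$ and take $(X, l)$ of dimension $g \geq 2$ satisfying the hypothesis for some rational $t > 0$. Since the set of values $\{\dim Z / (l^{\dim Z}.Z)^{1/\dim Z}\}$ over positive-dimensional abelian subvarieties $Z \subset X$ attains its maximum (by the integrality remark following \autoref{lem_bounds_seshadri}), I will choose a rational $t_0 < t$ for which the strict inequalities of the hypothesis still hold. Then I will apply the assumed affirmative answer to \autoref{conj1} to $(X, B_{t_0})$—whose requirement $(B_{t_0}^g) > g^g$ is subsumed by the $Z = X$ case of the hypothesis—to obtain an effective $\Q$-divisor $F_0$ on $X$ and an abelian subvariety $Y \subset X$ such that $t_0 l - F_0$ is nef and $Y$ is an irreducible component of $\Nklt(X, F_0)$. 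Crucially, $tl - F_0 = (t_0 l - F_0) + (t - t_0) l$ is then ample.

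If $\dim Y = 0$, then $Y = \{o\}$ is automatically an isolated point of $\Nklt(X, F_0)$, and \autoref{prop_local_enough'} applied with $F = F_0$, $S = \{o\}$, $\cali = \cali_o$ immediately yields $\cali_o \langle tl \rangle$ IT(0), i.e.\ $\beta(l) < t$. If instead $\dim Y \geq 1$, every abelian subvariety of $Y$ is also an abelian subvariety of $X$, so $(Y, l|_Y)$ inherits the hypothesis of \autoref{ques2} for the same $t$; the inductive hypothesis then gives $\beta(l|_Y) < t$, or equivalently, $\cali_{o/Y}\langle tl|_Y\rangle$ is IT(0) on $Y$. I will combine the two pieces of information via the short exact sequence
\[
0 \to \cali_Y \to \cali_o \to \iota_* \cali_{o/Y} \to 0 \qquad (\iota \colon Y \hookrightarrow X)
\]
on $X$. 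By \autoref{lem_IT(0)}(1) it then suffices to show that both $\cali_Y \langle tl \rangle$ and $\iota_* \cali_{o/Y} \langle tl \rangle$ are IT(0); the latter is immediate from \autoref{lem_IT(0)}(4) together with $\beta(l|_Y) < t$.

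The hard part will be the remaining statement $\cali_Y \langle tl \rangle$ IT(0). The idea is to leverage $F_0$: Nadel vanishing (valid because $tl - F_0$ is ample, after pullback by $\mu_b$ as in the proof of \autoref{prop_local_enough'}) shows $\calj(X, F_0) \langle tl \rangle$ is IT(0), while $\calj(X, F_0) \subset \cali_Y$ holds because $Y \subset \Nklt(X, F_0)$. To bridge IT(0) from $\calj(X, F_0)$ up to $\cali_Y$, I plan to apply Tie-breaking (\autoref{lem_perturbation}) so that, in a neighborhood of $Y$, the component $Y$ becomes the only component of $\Nklt$; the quotient $\cali_Y / \calj(X, F_0)$ is then supported on points disjoint from $Y$, a skyscraper whose contribution splits off from the relevant cohomology exactly as in the direct-sum argument of the proof of \autoref{prop_local_enough'}. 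The obstacle is that \autoref{lem_perturbation} produces this picture only locally around a single point: globally arranging the remaining irreducible components of $\Nklt(X, F_0)$ (possibly positive-dimensional, possibly meeting $Y$) to be cleanly separated from $Y$—without sacrificing either the ampleness of $tl - F_0$ or the status of $Y$ as a non-klt component—is the delicate step that drives the technical work.
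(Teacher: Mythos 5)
Your skeleton (induction on $g$, take $t_0<t$, apply \autoref{conj1} to get $F_0$ and an abelian subvariety $Y\subset\Nklt(X,F_0)$, split off $\dim Y=0$ via \autoref{prop_local_enough'}) matches the paper's opening moves, and your choice of exact sequence $0\to\cali_Y\to\cali_o\to\iota_*\cali_{o/Y}\to 0$ is a natural first attempt. But the step you flag as ``the hard part''---proving $\cali_Y\langle tl\rangle$ is IT(0)---is a genuine gap, not a tractable technicality. To run Nadel vanishing and pass from $\calj(X,F_0)$ to $\cali_Y$ you would need to control $\Nklt(X,F_0)$ \emph{globally}, so that the quotient $\cali_Y/\calj(X,F_0)$ cooperates; but \autoref{conj1} gives you local information only (that $Y$ is \emph{an} irreducible component of $\Nklt$), and \autoref{lem_perturbation} also perturbs only locally around one point. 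The paper explicitly discusses this obstruction in \autoref{rem_connected component}: ``we do not know how to control the singularity of $(X,cD)$ outside the origin,'' and so one cannot assume $\calj(X,F_0)=\cali_Y$ on all of $X$. In short, your route reduces to exactly the hurdle that the author says he could not clear.

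The paper avoids this entirely via \autoref{prop_induction}, whose key idea is Poincar\'e reducibility. One takes the isogeny $f\colon Y\times Z\to X$ with $f^*l = l|_Y\boxtimes l|_Z$, picks a \emph{general} point $x$, and replaces $\cali_o$ by $\cali_{f^{-1}(x)}$ on $Y\times Z$ (legitimate by \autoref{lem_IT(0)}(3)). Since $f^{-1}(x)\subset Y\times W$ with $W=(Y+x)\cap Z$ finite, the relevant exact sequence becomes $0\to\cali_{Y\times W}\to\cali_{f^{-1}(x)}\to\cali_{f^{-1}(x)}/\cali_{Y\times W}\to 0$, and $\cali_{Y\times W}=\calo_Y\boxtimes\cali_{W/Z}$ is a box product. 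Because $x$ is general, $W$ lands in the open set $U+x$ where $\Nklt(X,F)$ is \emph{just} $Y$; restricting $F$ to $Z$ (\autoref{lem_restriction_general_fiber}) then shows each point of $W$ is an isolated point of $\Nklt(Z,F|_Z)$, so \autoref{prop_local_enough'} gives IT(0) for $\cali_{W/Z}\langle tl|_Z\rangle$ without ever seeing the bad components of $\Nklt(X,F)$ elsewhere. The components of $\Nklt$ you were worried about are simply sliced away by the general translate of $Z$. Your observation that something like this generality trick would be needed is correct; the missing ingredient is that the trick should be performed \emph{after} isogeny to the product, where the ideal you need to prove IT(0) is a box product rather than $\cali_Y$ itself.
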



Before the proof of \autoref{prop_conj1_ques3}, we summarize relations between three questions as follows:
\begin{align*}
 \text{\autoref{conj1}$_{\leq g}$  $\rightsquigarrow$ \autoref{ques2}$_g$ $\rightsquigarrow$ \autoref{ques1}$_g$},
\end{align*}
where $Q \rightsquigarrow Q'$ means that an affirmative answer to $Q$ implies that to $Q'$,
and $Q_g$ or $ Q_{\leq g}$ means that we only consider the case when $\dim X =g $ or $\dim X \leq g$ respectively.
The first implication 
follows from \autoref{prop_conj1_ques3},
and the second one from \autoref{thm_ Bpf_threshold}. 
An important point is that the difference (a) does not cause any problem when we consider \autoref{conj1}
since we do not need to cut a lc center anymore if the center is an abelian subvariety. 
For \autoref{conj1}, the difference (b) still causes a problem when we cut down lc centers which are not abelian subvarieties.
We treat this problem in \autoref{sec_curve_hypersurface},\ref{sec_proof}.

\vspace{2mm}
To prove \autoref{prop_conj1_ques3},
we use the following
Poincar\'{e}'s reducibility theorem.
See \cite[Corollary 5.3.6]{MR2062673} for the proof.

\begin{thm}[Poincar\'{e}'s Reducibility Theorem]\label{lem_decomposition}
Let $(X,l)$ be a polarized abelian variety and $Y \subset X$ be an abelian subvariety.
Then there exists an abelian subvariety $Z \subset X$ 
such that the addition map $f : Y \times Z \rightarrow X $ is an isogeny with $f^* l = l|_Y \boxtimes l|_Z$.
\end{thm}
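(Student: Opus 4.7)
The plan is to construct $Z$ as the identity component of a kernel built from the polarization $\phi_L$, and then verify compatibility of polarizations via a Künneth-type decomposition of $\Pic/\Pic^0$ on a product of abelian varieties. Let $L$ be an ample line bundle representing $l$, and consider the isogeny $\phi_L : X \to \hat{X}$, $x \mapsto t_x^*L \otimes L^{-1}$. The closed embedding $\iota : Y \hookrightarrow X$ induces a surjective dual homomorphism $\hat{\iota} : \hat{X} \to \hat{Y}$, and I would set
\[
Z := \bigl(\ker(\hat{\iota} \circ \phi_L)\bigr)^{0},
\]
the identity component of the kernel. Since $\hat{\iota} \circ \phi_L$ is surjective, $Z$ is an abelian subvariety of dimension $\dim X - \dim Y$.

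Next I would show that the addition map $f : Y \times Z \to X$ is an isogeny. Its kernel is $\{(y,-y) : y \in Y \cap Z\}$, so it suffices to verify that $Y \cap Z$ is finite, and then a dimension count yields surjectivity. For any $y \in Y$ one has $\hat{\iota}(\phi_L(y)) = (t_y^*L \otimes L^{-1})|_Y = \phi_{L|_Y}(y)$, so $Y \cap Z \subset \ker \phi_{L|_Y}$, which is finite because $L|_Y$ is ample.

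The main obstacle is verifying $f^*l = l|_Y \boxtimes l|_Z$ in $\Pic(Y \times Z)/\Pic^0(Y \times Z)$. For this I would invoke the Künneth-type decomposition
\[
\Pic(Y \times Z)/\Pic^0(Y \times Z) \cong \bigl(\Pic(Y)/\Pic^0(Y)\bigr) \oplus \bigl(\Pic(Z)/\Pic^0(Z)\bigr) \oplus \Hom(Y, \hat{Z}),
\]
where the third summand assigns to a class $[M]$ the homomorphism $y \mapsto M|_{\{y\} \times Z} \otimes (M|_{\{o\} \times Z})^{-1}$. Restricting $f^*L$ to $Y \times \{o\}$ and to $\{o\} \times Z$ identifies the first two components of $f^*l$ as $l|_Y$ and $l|_Z$, matching those of $l|_Y \boxtimes l|_Z$; it then remains to show the mixed component of $f^*L$ vanishes. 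Writing $j : Z \hookrightarrow X$ for the inclusion, one computes the mixed component as
\[
f^*L|_{\{y\} \times Z} \otimes (f^*L|_{\{o\} \times Z})^{-1} = j^*(t_y^*L \otimes L^{-1}) = (\hat{j} \circ \phi_L \circ \iota)(y).
\]
The definition of $Z$ gives $\hat{\iota} \circ \phi_L \circ j = 0$; dualizing this identity and using the symmetry $\hat{\phi_L} = \phi_L$ yields $\hat{j} \circ \phi_L \circ \iota = 0$, which provides the required vanishing and completes the proof.
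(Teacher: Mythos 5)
Your proof is correct. The paper itself gives no argument for this theorem, simply citing Birkenhake--Lange, \emph{Complex Abelian Varieties}, Corollary 5.3.6; your construction of $Z$ as the identity component of $\ker(\hat{\iota}\circ\phi_L)$, the finiteness of $Y\cap Z$ via $\phi_{L|_Y}$, and the verification of $f^*l = l|_Y\boxtimes l|_Z$ through the N\'eron--Severi decomposition of a product of abelian varieties together reproduce the standard argument found in that reference.
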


The following is a key proposition in the proof of \autoref{prop_conj1_ques3}.
This is a generalization of \autoref{prop_local_enough}.

\begin{prop}\label{prop_induction}
Let $(X,l)$ be a polarized abelian variety.
Assume that there exist a rational number $t >0$,
an effective $\Q$-divisor $F$ on $X$
and an abelian subvariety $ Y \subset X$ such that
\begin{itemize}
\item[(i)] $tl-F$ is ample,
\item[(ii)] 
$Y$ is an irreducible component of $\Nklt(X,F) $,
\item[(iii)] $\beta(l|_Y) < t$,
where we set $\beta(l|_Y) :=-\infty$ if $Y=\{o\}$.
\end{itemize}
Then $\beta(l) < t$.
\end{prop}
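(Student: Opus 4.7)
The case $Y = \{o\}$ is exactly \autoref{prop_local_enough}, so assume $\dim Y \geq 1$ and $\beta(l|_Y) < t$. My plan is to reduce the desired statement $\beta(l) < t$ — equivalently, $\cali_o \langle tl \rangle$ is IT(0) by \autoref{lem_beta_IT(0)} — to two IT(0) statements via the short exact sequence
\[
0 \to \cali_Y \to \cali_o \to \iota_* \cali_{o/Y} \to 0
\]
on $X$, where $\iota : Y \hookrightarrow X$. By \autoref{lem_IT(0)}~(1) it then suffices to prove that both $\cali_Y \langle tl \rangle$ and $\iota_* \cali_{o/Y} \langle tl \rangle$ are IT(0) on $X$. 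The latter is immediate: from $\beta(l|_Y) < t$ the sheaf $\cali_{o/Y} \langle tl|_Y \rangle$ is IT(0) on $Y$, and \autoref{lem_IT(0)}~(4) transports this to $X$. Thus the whole content of the proposition is to use $F$ and the assumptions (i), (ii) to establish the IT(0) of $\cali_Y \langle tl \rangle$.

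Writing $t = a/b$, the IT(0) of $\cali_Y \langle tl \rangle$ is equivalent, via the long exact sequence obtained from $0 \to \cali_Y \to \calo_X \to \iota_* \calo_Y \to 0$ pulled back by $\mu_b$ and twisted by $L^{ab} \otimes P_\alpha$, to the surjectivity of the restriction map
\[
H^0(X, L^{ab} \otimes P_\alpha) \twoheadrightarrow H^0(\mu_b^{-1}(Y), L^{ab}|_{\mu_b^{-1}(Y)} \otimes P_\alpha|_{\mu_b^{-1}(Y)})
\]
for every $\alpha \in \widehat{X}$, since ampleness of $tl$ on $X$ and of $tl|_Y$ on $Y$ kills all higher cohomology of the other two terms. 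To produce this surjection I combine Nadel vanishing for $F$ with a tie-breaking perturbation. Along the lines of \autoref{lem_perturbation}, I replace $F$ by an effective $\Q$-divisor $F'$ such that $tl - F'$ is still ample, $(X, F')$ is log canonical in a neighborhood of $Y$ with $Y$ as a minimal lc center, and the multiplier ideal decomposes as $\calj(X, F') = \cali_Y \cap \calj''$ with $V(\calj'') \cap Y = \emptyset$.

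With such an $F'$ in hand, the disjointness $V(\cali_Y) \cap V(\calj'') = \emptyset$ gives $\calo_X / \calj(X, F') \cong \calo_X/\cali_Y \oplus \calo_X/\calj''$. Nadel vanishing implies that $\calj(X, F') \langle tl \rangle$ is IT(0), hence the surjection $H^0(X, L^{ab} \otimes P_\alpha) \twoheadrightarrow H^0(X, \calo_X / \mu_b^* \calj(X, F') \otimes L^{ab} \otimes P_\alpha)$ for every $\alpha$, exactly as in the proof of \autoref{prop_local_enough'}. Projecting onto the $\calo_X/\cali_Y$ summand identifies the target with $H^0(\mu_b^{-1}(Y), L^{ab}|_{\mu_b^{-1}(Y)} \otimes P_\alpha|_{\mu_b^{-1}(Y)})$ and produces the restriction surjection required above. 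Combining the three pieces — $\cali_Y \langle tl \rangle$ IT(0), $\iota_* \cali_{o/Y} \langle tl \rangle$ IT(0), and \autoref{lem_IT(0)}~(1) applied to the short exact sequence — finishes the proof.

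The main obstacle will be executing the tie-breaking cleanly enough to secure the decomposition $\calj(X, F') = \cali_Y \cap \calj''$ with $V(\calj'')$ actually disjoint from $Y$: one has to deal both with the possibility that other components of $\Nklt(X, F)$ meet $Y$ and with the need to make the $\cali_Y$-primary component of $\calj(X, F')$ equal to $\cali_Y$ itself rather than to a thicker ideal such as $\cali_Y^2$. Everything else — Nadel vanishing, the two short exact sequences on $X$, and the final applications of \autoref{lem_IT(0)} — is routine once the correct $F'$ has been constructed.
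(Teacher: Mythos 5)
Your decomposition $0 \to \cali_Y \to \cali_o \to \iota_*\cali_{o/Y} \to 0$ is valid (since $o \in Y$), and the reduction of $\iota_*\cali_{o/Y}\langle tl\rangle$ to hypothesis (iii) via \autoref{lem_IT(0)}~(4) is correct. The route you take, however, is genuinely different from the paper's, and the step you flag as ``the main obstacle'' is a real gap, not a routine tie-breaking exercise. You need $\calj(X,F') = \cali_Y \cap \calj''$ with $V(\calj'')$ disjoint from $Y$ in order to split $\calo_X/\mu_b^*\calj(X,F')$ as a direct sum and project the Nadel surjection onto the $\calo_X/\mu_b^*\cali_Y$ summand. (Merely having $\calj(X,F') \subseteq \cali_Y$, which is automatic from (ii), does not give surjectivity of $H^0(\calo_X/\mu_b^*\calj(X,F')\otimes \cdots)\to H^0(\calo_X/\mu_b^*\cali_Y\otimes\cdots)$ without a vanishing for the kernel.) The problem is that \autoref{lem_perturbation}-style tie-breaking is a local statement around a single point, and you would need a version that holds uniformly in a neighborhood of \emph{all} of $Y$. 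At points where other components of $\Nklt(X,F)$ meet $Y$, the required scaling factor $c$ and perturbation $\epsilon$ vary along $Y$, and there is no obvious single choice of $(c,\epsilon)$ making $(X, cF+\epsilon H)$ lc with non-klt locus exactly $Y$ on a full neighborhood of $Y$; moreover even if the non-klt locus were $Y$, one has to rule out the multiplier ideal being a thicker ideal than $\cali_Y$ at special points of $Y$, as you yourself note.

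The paper's proof is engineered precisely to sidestep this difficulty. It uses Poincar\'e reducibility to get an isogeny $f: Y\times Z\to X$ with $f^*l = l|_Y\boxtimes l|_Z$, replaces $\cali_o$ by $\cali_x$ for a \emph{general} $x$, and pulls back along $f$. The analogous short exact sequence becomes $0\to\cali_{Y\times W}\to\cali_{f^{-1}(x)}\to\cali_{f^{-1}(x)}/\cali_{Y\times W}\to 0$ with $W=(Y+x)\cap Z$ a finite set. Because $x$ is general, $W$ (equivalently $Y\cap(Z-x)$) lands in the open set $U$ where $\Nklt(X,F)\cap U = Y\cap U$, and by \autoref{lem_restriction_general_fiber} the points of $W$ are \emph{isolated} in $\Nklt(Z,(F+x)|_Z)$; so one only needs \autoref{prop_local_enough'} for a finite set on $Z$, not any global control of $\calj$ along $Y$. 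The box-product structure $f^*l=l|_Y\boxtimes l|_Z$ and K\"unneth then handle the $Y$-direction with no multiplier-ideal input at all. In short, the two devices you are missing are: (a) working with a general translate so that the finitely many points to control avoid the bad locus of $Y$, and (b) the product decomposition of $(X,l)$ via Poincar\'e reducibility, which converts the non-klt condition along a positive-dimensional $Y$ into a pointwise condition on the complementary factor $Z$. Without something playing these roles, the construction of your $F'$ remains an open problem rather than a finishing detail.
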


\begin{proof}
For a subset $S \subset X$ and a point $x \in X$,
the translations of $S$ by $x, -x$ are denoted by $S+x, S-x \subset X$ respectively.
Let $f : Y \times Z \rightarrow X$ be the isogeny in \autoref{lem_decomposition}.

By (ii),
there exists an open subset $ U \subset X$ such that
$\Nklt (X,F) \cap U = Y \cap U \neq \emptyset$.
Take a general $x \in X$. In particular, the finite set $Y \cap (Z-x) $ is contained in $U$.
Applying \autoref{lem_restriction_general_fiber} to $(X,F)$ and the quotient homomorphism $   X \rightarrow X/Z$,
we have
\begin{align*}
\Nklt (Z-x, F|_{Z-x}) \cap U  &=\Nklt (X,F) \cap (Z-x)  \cap U \\
&=(Y\cap U) \cap (Z-x) = Y \cap (Z-x).
\end{align*}

Set $W:=(Y+x ) \cap Z \subset U+x$ and $D_Z:= (F+x)|_Z \equiv F|_Z$.
Then
\begin{align}\label{eq_Nklt_D_Z}
\Nklt(Z,D_Z) \cap (U+x)  = \Nklt (Z, (F+x) |_{Z}) \cap (U+x)  =(Y+x) \cap Z =W.
\end{align}

Recall that $\beta(l) < t$ holds if and only if $ \cali_{x} \langle t  l \rangle$ is IT(0).
Since $f $ is an isogeny,
$ \cali_{x} \langle t  l \rangle$ is IT(0) if and only if $\cali_{f^{-1}(x)} \langle t f^* l \rangle $ is IT(0) by \autoref{lem_IT(0)} (3).

Since $W=(Y+x) \cap Z $ and $-Y=Y \subset X$, it holds that 
$f^{-1} (x) = \{ (x-w,w) \in Y \times Z \, | \, w \in W\} \subset Y \times W$.
Hence we have an exact sequence
\[
0 \rightarrow \cali_{Y \times W} \rightarrow \cali_{f^{-1}(x)} \rightarrow \cali_{f^{-1}(x)} /\cali_{Y \times W} \rightarrow 0
\]
on $Y \times Z$.
To prove that $\cali_{f^{-1}(x)} \langle t f^* l \rangle $ is IT(0), 
it suffices to show that both $\cali_{Y \times W}  \langle t f^* l \rangle $ and $(\cali_{f^{-1}(x)} /\cali_{Y \times W} ) \langle t f^* l \rangle $ are IT(0)
by \autoref{lem_IT(0)} (1).

\begin{claim}\label{clm_IT(0)_1}
$\cali_{Y \times W}  \langle t f^* l \rangle $ is IT(0).
\end{claim}

\begin{proof}
Since $\calo_Y \langle t  l|_Y  \rangle$ is IT(0) and 
\[\cali_{Y \times W}  \langle t f^* l \rangle = \calo_Y \boxtimes \cali_{W/Z}  \langle t ( l|_Y \boxtimes l |_Z) \rangle ,
\]
it suffices to show that $\cali_{W/Z} \langle t l |_Z \rangle$ is IT(0) by \autoref{lem_IT(0)} (2).
By (i), $t l |_Z  -D_Z  \equiv t l |_Z -F |_Z$ is ample.
By \ref{eq_Nklt_D_Z}, each point in $W$ is an isolated point of $\Nklt(Z,D_Z)$.
Hence $\cali_{W/Z} \langle t l |_Z \rangle$ is IT(0) by  \autoref{prop_local_enough'}.
\end{proof}

\begin{claim}\label{clm_IT(0)_2}
$(\cali_{f^{-1}(x)} /\cali_{Y \times W} ) \langle t f^* l \rangle $ is IT(0).
\end{claim}

\begin{proof}
Since $f^{-1} (x) = \{ (x-w,w) \in Y \times Z \, | \, w \in W\} \subset Y \times W$, we have
\[
\cali_{f^{-1}(x)} /\cali_{Y \times W}  = \bigoplus_{w \in W}  \cali_{(x-w,w)/ Y \times \{w\}} .
\]
Since $\beta (l|_Y) < t$ by (iii), $ \cali_{x-w/Y} \langle t  l|_Y \rangle$ is IT(0) for any $w \in W$.
Hence $ \cali_{(x-w,o)/ Y \times \{o\}}  \langle t f^* l \rangle  $ is IT(0) by \autoref{lem_IT(0)} (4).
Then $ \cali_{(x-w,w)/ Y \times \{w\}}  \langle t f^* l \rangle  $  is IT(0) and hence so is the direct sum $\cali_{f^{-1}(x)} /\cali_{Y \times W}  \langle t f^* l \rangle = \bigoplus_{w \in W}  \cali_{(x-w,w)/ Y \times \{w\}}  \langle t f^* l \rangle$.
\end{proof}

By Claims \ref{clm_IT(0)_1}, \ref{clm_IT(0)_2},
$\cali_{f^{-1}(x)} \langle t f^* l \rangle $ is IT(0).
Hence this proposition follows.
\end{proof}

Now we can prove \autoref{prop_conj1_ques3}.

\begin{proof}[Proof of \autoref{prop_conj1_ques3}]
We prove this theorem by the induction on $g$.
If $g=1$, 
there is nothing to prove since we already see that \autoref{ques2} has an affirmative answer for $\dim X=1$
by \autoref{lem_rho_dim2} (1).

Assume that this theorem holds for $g-1$.
Let $(X,l), B_t$ be as in \autoref{ques2} with $\dim X=g$.
What we need to show is that 
an affirmative answer to \autoref{conj1}$_{\leq g}$  implies $\beta(l) < t$.

Since $(B_t^{g}) > g^g$, we still have $(B^g) > g^g$ for $B=B_{t'}$ with $0 < t-t' \ll 1$.
If we assume an affirmative answer to \autoref{conj1}$_{g}$, 
there exists an effective $\Q$-divisor $F $ on $X$ such that $B-F$ is nef 
and $\Nklt(X,F)$ contains an abelian subvariety $Y \subset X$
as an irreducible component.
Since $tl -F=B_t -F =(B_t-B) +(B-F)$ is ample,
$F$ satisfies (i), (ii) in \autoref{prop_induction}.
If we assume an affirmative answer to \autoref{conj1}$_{\leq g-1}$, 
we have an affirmative answer to \autoref{ques2}$_{\leq g-1}$ by the induction hypothesis.
Hence $\beta(l|_Y) < t$ follows.
Thus (iii) in \autoref{prop_induction} is also satisfied
and $\beta(l) < t$ follows from \autoref{prop_induction}.
\end{proof}

\section{Two lemmas on curves and hypersurfaces on abelian varieties}\label{sec_curve_hypersurface}

In this section,
we show two lemmas on curves and hypersurfaces on abelian varieties.
In the proof of \autoref{main_thm},
we will apply these lemmas to minimal lc centers of $(X,D)$ for some $D$ on an abelian threefold $X$.

Throughout this section,
let $X, B$ be as in \autoref{conj1},
that is,
$X$ is an abelian $g$-fold and
$B$ is an ample $\Q$-divisor on $X$ with $(B^g) > g^g$.
For a subvariety $V \subset X$,
 $\langle V \rangle \subset X$ denotes the smallest abelian subvariety which contains a translate of $V$.
Let $\pi : X' \arw X $ be the blow-up at the origin $o \in X$
and $E \subset X'$ be the exceptional divisor.

\vspace{2mm}

The following lemma is a variant of \cite[Proposition 4.3]{lozovanu2018singular}.

\begin{lem}\label{lem_deg_curve}
For a curve $C \subset X$,
\begin{enumerate}
\item $(B.C) > g^2 / \sqrt[g]{g!}$ if $\langle C \rangle=X$.
\item If $S:=\langle C \rangle $ is a hypersurface,
$B-S$ is ample or $(B.C) > g(g-1) / \sqrt[g-1]{(g-1)!} $.
\end{enumerate}
\end{lem}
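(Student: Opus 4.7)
The plan is to prove a sharper form of (1), namely $g!\,(B.C)^g \geq g^g\,(B^g)$ whenever $\langle C \rangle = X$, and then to deduce (2) by applying this sharper form to the polarized abelian $(g-1)$-fold $(S, B|_S)$.

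For (1), I would translate $C$ to pass through the origin and consider the surjective addition morphism $a \colon C^g \to X$. The key computation is $(a^*B)^g = \deg(a)\cdot(B^g)$ on $C^g$. Decomposing $a^*B$ via the theorem of the square into the sum $\sum_i p_i^*(B|_C)$ of ``axial'' classes (where $p_i \colon C^g \to C$ are the projections) plus a Mumford-type correction $\Phi$, and using that the axial classes satisfy $(p_i^*(B|_C))^2 = 0$, the term $g!\,(B.C)^g$ arises as the main contribution, while a Hodge-index-style inequality on $C^g$ controls $\Phi$. Combined with a structural lower bound on $\deg(a)$ (which follows from the fact that $\langle C \rangle = X$ forces $g(C) \geq g$ and that the addition morphism factors through the Jacobian of $C$), this yields the sharper inequality, and then (1) follows from the hypothesis $(B^g) > g^g$.

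For (2), the key structural observation is that $S$ is a translate of a codimension-one abelian subvariety $S_0 \subset X$, so via the quotient $q \colon X \to X/S_0$ (an elliptic curve) one has $[S] = q^*[\mathrm{pt}]$ in $N^1(X)$, whence $[S]^k = 0$ in $N^k(X)$ for every $k \geq 2$. All higher powers of $S$ therefore drop out of the binomial expansion, yielding
\[
((B-S)^g) = (B^g) - g\,(B^{g-1}\cdot S).
\]
A case analysis---treating the nef and non-nef subcases of $B - S$ separately---then shows that $B - S$ fails to be ample precisely when $(B^{g-1}\cdot S) \geq (B^g)/g$. If $B - S$ is not ample, this yields $(B|_S)^{g-1} = (B^{g-1}\cdot S) > g^{g-1} > (g-1)^{g-1}$, so the polarized abelian $(g-1)$-fold $(S, B|_S)$ satisfies the hypothesis of (1) in dimension $g-1$. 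Since the ambient-abelian closure of $C$ inside $S$ is all of $S$, applying the sharper form of (1) to $(S, B|_S)$ gives $(B.C)^{g-1} \geq (g-1)^{g-1}(B^{g-1}\cdot S)/(g-1)! > (g-1)^{g-1}g^{g-1}/(g-1)!$, and taking the $(g-1)$-th root gives $(B.C) > g(g-1)/\sqrt[g-1]{(g-1)!}$.

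The main obstacle is the sharpness in (1): a naive application of Hodge index and the addition morphism yields only $(B.C)^g \geq (B^g)$, and obtaining the extra factor $g^g/g!$ requires precise tracking of the Mumford contribution $\Phi$ on $C^g$ together with the sharp lower bound $\deg(a) \geq g!$. For (2), the non-nef subcase of the ampleness dichotomy also requires care, as a direct Nakai--Moishezon argument only covers the nef case cleanly and curves $C'$ with $((B-S)\cdot C') < 0$ must be handled separately, probably by exploiting that $[S]^2 = 0$ forces any such $C'$ to intersect $S$ in a controlled way.
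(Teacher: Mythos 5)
Your overall strategy matches the paper, especially for part (2). But there are two points worth flagging.

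For (1), the ``sharper form'' $g!\,(B.C)^g \geq g^g(B^g)$ for curves generating $X$ is exactly Debarre's degree bound for curves in abelian varieties, \cite[Proposition 4.1]{MR1294460}, which the paper simply invokes in one line: $(B.C) \geq g\sqrt[g]{(B^g)/g!}$. Your plan to reprove it via the addition map $a\colon C^g \to X$, the theorem of the cube, and a Hodge-index argument is plausibly an outline of Debarre's actual proof, and the claim $\deg(a)\geq g!$ (via factoring through $C^{(g)}$) is fine. But as you yourself identify, controlling the cube-theorem correction $\Phi$ is the real content, and your sketch does not resolve it; the inequality could a priori go the wrong way. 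Since the statement is available as a citation, reproving it is an unnecessary detour and, as written, a gap.

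For (2), your reduction to part (1) on the abelian $(g-1)$-fold $(S, B|_S)$ is exactly the paper's. However, the ``non-nef subcase requires care'' worry is a non-issue, and the $[S]^2=0$ computation of $((B-S)^g)$ is not needed. The paper's case split is on $(B^{g-1}.S) \leq g^{g-1}$ versus $(B^{g-1}.S) > g^{g-1}$. In the first case $(B^g) > g^g \geq g(B^{g-1}.S)$, so $B-S$ is big by Siu's criterion \cite[Theorem 2.2.15]{MR2095472} (applicable since $B$ is ample and $S$, being effective on an abelian variety, is nef). On an abelian variety, big implies ample: a big class has an effective multiple, effective classes are nef on abelian varieties, and a nef big class on an abelian variety is ample. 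This closes the first case with no Nakai--Moishezon case analysis and no need to identify precisely when $B-S$ fails to be ample. In the second case, Debarre's bound applied to $C \subset S$ gives $(B.C) \geq (g-1)\sqrt[g-1]{(B^{g-1}.S)/(g-1)!} > g(g-1)/\sqrt[g-1]{(g-1)!}$, as you compute; your slightly stronger threshold $(B^g)/g$ in place of $g^{g-1}$ also works, but is not needed.
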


\begin{proof}
(1) By \cite[Proposition 4.1]{MR1294460}, we have $(B.C) \geq g \sqrt[g]{(B^g)/g!} > g^2 / \sqrt[g]{g!}$.\\
(2) If $(B^{g-1}.S) \leq g^{g-1}$, it holds that $(B^g) > g^g \geq g (B^{g-1}.S)$.
Hence $B-S$ is big by \cite[Theorem 2.2.15]{MR2095472}. 
Since $X$ is an abelian variety,
$B-S$ is ample.

If $(B^{g-1}.S) > g^{g-1}$,
we have $(B.C) \geq (g-1) \sqrt[g-1]{(B^{g-1}.S)/(g-1)!} >g(g-1) / \sqrt[g-1]{(g-1)!}  $ by \cite[Proposition 4.1]{MR1294460}.
\end{proof}

In the following lemma,
we combine the argument in \cite[Section 4]{MR1455517} with \cite{MR2135747}, \cite[Propositions 4.2, 4.4]{lozovanu2018singular}.
For the asymptotic multiplicity $\mult_{S'} ( \| \pi^* B - g E \|)$,
we refer the readers to \cite{MR2282673}, \cite{lozovanu2018singular}.

\begin{lem}\label{lem_S_is_abelian}
Let $S \subset X$ be a prime divisor such that $\mult_{S'} ( \| \pi^* B - g E \|) \geq 1$,
where $S' \subset X'$ is the strict transform of $S$.
Then $S$ is an abelian subvariety and $B-S$ is ample.
\end{lem}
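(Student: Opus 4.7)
My approach is to combine Nakamaye's augmented-base-locus theorem with a pushdown argument, using the hypothesis to extract both a numerical equality for $(B^{g-1}\cdot S)$ and the pseudoeffectivity of $B-S$, and then to force $S$ to be an abelian subvariety via Debarre-type bounds together with the restricted-volume analysis in \cite{lozovanu2018singular}.

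First I would show that $o\in S$: otherwise the strict transform $S'$ is disjoint from $E$ and $S'\cong S$ via $\pi$, so pushing forward any effective $\Q$-divisor $D_m\equiv m(\pi^*B-gE)$ with $\mult_{S'}(D_m)\geq m(1-\epsilon)$ produces an effective $\Q$-divisor in $|mB|$ containing $S$ with multiplicity $\geq m(1-\epsilon)$, contradicting $\mult_S(\|B\|)=0$ for the ample class $B$. Set $m_0:=\mult_o(S)\geq 1$, so that $S'=\pi^*S-m_0 E$ in $\Pic(X')$.

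Next I would apply Nakamaye's theorem \cite{MR2135747}. The class $D:=\pi^*B-gE$ is big since $(D^g)=(B^g)-g^g>0$, and the hypothesis forces $S'\subset\mathbb{B}_+(D)$. By Nakamaye, $\mathbb{B}_+(D)$ agrees with the null locus of $D$; the only subvariety strictly containing $S'$ is $X'$, which is not null because $(D^g)>0$, so $S'$ itself is null: $(D^{g-1}\cdot S')=0$. Expanding via $S'=\pi^*S-m_0 E$ together with the standard blow-up relations $(\pi^*B)^j\cdot E^{g-j}=0$ for $1\leq j\leq g-1$ and $E^g=(-1)^{g-1}$, this simplifies to the key equality
\[
(B^{g-1}\cdot S)=m_0\,g^{g-1}.
\]
In parallel, choosing $D_m\equiv mD$ effective with $\mult_{S'}(D_m)/m\to\alpha\geq 1$, writing $D_m=m\alpha_m S'+E_m'$ with $E_m'\geq 0$ not containing $S'$, and pushing down via $\pi$ (recall $\pi_*E=0$) yields an effective divisor of class $m(B-\alpha_m S)$ on $X$ not containing $S$; in the limit, $B-\alpha S$ is pseudoeffective, and since pseff equals nef on an abelian variety, $B-\alpha S$ and in particular $B-S$ are nef.

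Finally I would rule out the case that $S$ is not a translate of an abelian subvariety. Since $S$ is a hypersurface, this would force $\langle S\rangle=X$, and then Debarre's inequality \cite[Proposition 4.1]{MR1294460} gives a lower bound for $(B^{g-1}\cdot S)$ in terms of $(B^g)$. Combining this with the equality from the Nakamaye step, with Lozovanu's restricted-volume analysis \cite[Propositions 4.2, 4.4]{lozovanu2018singular}, and with the cut-down argument of \cite[Section 4]{MR1455517}, one derives a contradiction with $(B^g)>g^g$. Hence $S$ is a translate of an abelian subvariety; since $o\in S$, $S$ is itself an abelian subvariety and in particular smooth, forcing $m_0=1$ and $(B^{g-1}\cdot S)=g^{g-1}$. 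The strict inequality $(B^g)>g^g=g(B^{g-1}\cdot S)$ then gives that $B-S$ is big by \cite[Theorem 2.2.15]{MR2095472}, and combined with the nefness from the pushdown step one concludes that $B-S$ is nef and big, hence ample on the abelian variety $X$. The main obstacle will be exactly this last step: extracting the abelian-subvariety structure of $S$ from the numerical equality $(B^{g-1}\cdot S)=m_0 g^{g-1}$ together with the pseudoeffectivity of $B-S$, which is precisely where one must weave together the cut-down technique, Nakamaye's and Lozovanu's restricted-volume computations, and Debarre's inequality.
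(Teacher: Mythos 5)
Your central step invokes Nakamaye's null-locus theorem to deduce $(D^{g-1}\cdot S')=0$ from $S'\subset\mathbf{B}_+(D)$ with $D=\pi^*B-gE$, but that theorem applies only to \emph{nef and big} divisors, and here $D$ is merely big: nefness of $\pi^*B-gE$ is equivalent to $\ep(X,l)\geq g$, which is not part of the hypothesis $(B^g)>g^g$. For a big divisor that is not nef, $\mathbf{B}_+(D)$ is characterized via restricted volumes (Ein--Lazarsfeld--Musta\c{t}\u{a}--Nakamaye--Popa), not intersection numbers, and there is no reason a divisorial component of $\mathbf{B}_+(D)$ should satisfy the null condition $(D^{g-1}\cdot S')=0$. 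Your key equality $(B^{g-1}\cdot S)=m_0\,g^{g-1}$ is therefore unjustified, and so is everything derived from it --- bigness of $B-S$, the reduction to $m_0=1$, and the final numerical contradiction. The last paragraph, which promises to weave together Debarre's inequality, Lozovanu's restricted-volume analysis and Helmke's cut-down, is left as a plan; that is exactly where the genuine work has to happen.

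The paper's proof avoids the null locus altogether. Ampleness of $B-S$ is immediate: $\mult_{S'}(\|\pi^*B-gE\|)\geq 1$ implies $\vol(\pi^*B-gE)=\vol(\pi^*B-gE-S')$, so $\pi^*B-gE-S'$ is big, so its pushforward $B-S$ is big, hence ample on an abelian variety --- no nefness argument is needed. For the abelian-subvariety structure the paper first cites Lozovanu's Lemma~4.2 to reduce to proving $\mult_o(S)=1$, then runs the quantitative argument you were gesturing at: set $m_{S'}(t)=\mult_{S'}(\|\pi^*B-tE\|)$, bound $\vol_{X'|E}(\pi^*B-tE)$ above by $(t-m\,m_{S'}(t))^{g-1}$ with $m=\mult_o(S)$, use convexity of $m_{S'}$ together with the linear lower bound $m_{S'}(t)-m_{S'}(g)\geq t-g$ to cap the integrand, and integrate $(B^g)=g\int_0^\tau\vol_{X'|E}(\pi^*B-tE)\,dt$ to contradict $(B^g)>g^g$ when $m\geq 2$. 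Your proposal has the right references in hand but substitutes a false intersection-theoretic shortcut for the estimate on $\mult_o(S)$ that actually drives the result.
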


\begin{proof}
By $\mult_{S'} ( \| \pi^* B - g E \|) \geq 1 $ and \cite[Lemma 3.3]{MR2282673}, we have $\vol(\pi^* B -gE)=\vol(\pi^* B -gE  -S')$.
Since $\pi^* B -gE$ is big by $(B^g) > g^g$,
so is $\pi^* B -gE  -S'$.
Hence $\pi_*(\pi^* B -gE  -S')=B-S$ is big as well.
Since $X$ is an abelian variety, 
the big divisor $B-S$ is ample.
Thus the rest is to show that $S$ is an abelian subvariety.

By the assumption $\mult_{S'} ( \| \pi^* B - g E \|) \geq 1 >0$,
$S' $ is contained in the 
restricted base locus $\mathbf{B}_{-}(\pi^* B - g E)$ by \cite[Proposition 2.8]{MR2282673}.
In particular,
$S'$ is contained in the augmented base locus $\mathbf{B}_{+}(\pi^* B - g E) $.
By \cite[Lemma 4.2]{lozovanu2018singular},
$o $ is contained in $S$ and if $S$ is smooth at $o$, $S$ is an abelian subvariety.
Hence it suffices to show that $\mult_o (S) =1$.

Set $\tau =\sup \{ t \geq 0 \, | \, \pi^* B -tE \text{ is pseudo-effective}\} >g$
and $ m=\mult_o(S) \geq 1$.
Define a function $m_{S'} : [0,\tau)  \rightarrow \R_{\geq 0}$ by $m_{S'} (t) = \mult_{S'} ( \| \pi^* B - t E \|) $.

By the definition of $m_{S'} (t) $ and \cite[Lemma 3.3]{MR2282673},
we have a bound of 
the restricted volume $ \vol_{X'|E} (\pi^* B - t E ) $ for $t \in (0,\tau)$ as
\begin{align*}
0 < \vol_{X'|E} (\pi^* B - t E ) &= \vol_{X'|E} (\pi^* B - t E - m_{S'} (t) S') \\
 & \leq \vol_{E} ((\pi^* B - t E - m_{S'} (t) S')|_E) \\
 &= \vol_{\P^{g-1}} \calo_{\P^{g-1}}(t-m m_{S'} (t)) 
\end{align*}
since $ S'|_{E} \sim \calo_E(m)$.
Hence we have $ t-m m_{S'} (t) >0 $ for $t \in (0,\tau)$ and
\begin{align}\label{eq_volume_bound}
(B^g) = g \int_{0}^{\tau}  \vol_{X'|E} (\pi^* B - t E ) dt \leq g  \int_{0}^{\tau}  (t-m m_{S'} (t))^{g-1} dt,
\end{align}
where the first equality follows from \cite[Corollary C]{MR2571958}.

By \cite[Lemma 3.4]{MR1393263} (see also \cite[Proposition 4.4]{lozovanu2018singular}),
we have $m_{S'} (t) - m_{S'} (g) \geq t -g  $ for $t \in [g,\tau)$.
Thus it holds that
  \begin{equation}\label{eq_upper_bound}
\begin{aligned}
 t - m m_{S'} (t)  &\leq t -m  (t-g+m_{S'} (g)) \\
 &= -(m-1)(t-g) + g- mm_{S'} (g)
\end{aligned}
  \end{equation}
for $t \in [g,\tau)$.
For simplicity, set $\alpha=g- mm_{S'} (g)$.
We have
\begin{align}\label{eq_bound_of_alpha}
0 <  \alpha=g- mm_{S'} (g) \leq g-m
\end{align}
by the assumption $m_{S'}(g) =\mult_{S'} ( \| \pi^* B - g E \|) \geq 1$.

Since $m_{S'} (t)$ is a convex function by \cite[Lemma 2.5]{lozovanu2018singular},
\[
\Lambda_1=\{ (t,y) \in \R^2 \,| \,  0 \leq t < \tau , 0 \leq y \leq   t - m m_{S'} (t) \}
\] 
is a convex set.
The set
\[
\Lambda_2=\{ (t,y) \in \R^2 \,| \,  t \geq g , y \geq  -(m-1)(t-g)  + \alpha \}
\]
is also convex.
Since $(g,\alpha) \in \Lambda_1 \cap \Lambda_2$ and  $\Lambda_1$ and $\Lambda_2$ do not intersect in the interiors 
by \ref{eq_upper_bound},
there exists $\lambda \geq m-1$ such that
\[
t - m m_{S'} (t)  \leq   -\lambda(t-g)  + \alpha
\]
for any $t \in [0,\tau)$.

Recall that we need to show $m=1$.
To prove this by contradiction, 
we assume $m \geq 2$.
Then we have $\lambda \geq m-1 \geq 1$.
Let $(a,a) $ be the intersection point of two lines $y=t, y=  - \lambda (t-g) + \alpha$,
and $(b,0)$ be the intersection point of two lines $y=0, y= - \lambda (t-g) + \alpha$,
that is,
\begin{align}\label{eq_a,b}
a = \frac{g\lambda + \alpha}{1+\lambda}, \quad b=  \frac{g\lambda + \alpha}{\lambda}.
\end{align}
We note that $ \tau \leq b$ follows since  $0 < t - m m_{S'} (t)  \leq   -\lambda(t-g)  + \alpha$ for any $t \in [ 0, \tau)$.

By \ref{eq_volume_bound} and the assumption $(B^g) > g^g$,
\begin{align*}
g^g < (B^g)  &\leq  g  \int_{0}^{\tau}  (t-m m_{S'} (t))^{g-1} dt \\
&=g \int_{0}^{a}  (t-m m_{S'} (t))^{g-1} dt  +g \int_{a}^{\tau}  (t-m m_{S'} (t))^{g-1} dt \\
&\leq g \int_{0}^{a}  t^{g-1} dt + g \int_{a}^{b}  ( -\lambda(t-g)  + \alpha)^{g-1} dt \\
&=a^g+ \frac{1}{\lambda}a^g =a^{g-1}b  \leq \left(  \frac{(g-1)a+b}{g}  \right)^g,
\end{align*}
where the last inequality follows from 
the fact that the geometric mean is bounded by the arithmetic mean.
Thus we have
\[
g  <  \frac{(g-1)a+b}{g} ,
\]
which is equivalent to 
\[
0 < ( \alpha +1 -g) g \lambda +\alpha 
\]
by \ref{eq_a,b}.
Since $\alpha \leq g-m$ by \ref{eq_bound_of_alpha} and $\lambda \geq m-1 \geq 1$,
it holds that
\begin{align*}
0 < ((g-m)+1 -g) g \lambda+ (g-m) &= -(m-1)g \lambda +g-m \\
&\leq -(m-1)^2 g +g-m.
\end{align*}
Hence $ (m-1)^2 < 1-\frac{m}{g} <1$, which contradicts $m \geq 2$. 
Thus we have $\mult_o(S)=m=1$.
\end{proof}

\section{Proof of \autoref{main_thm}}\label{sec_proof}

We answer \autoref{conj1} affirmatively for $\dim X \leq 3$:

\begin{thm}\label{thm_abelian_center_dim3}
\autoref{conj1} has an affirmative answer for $\dim X \leq 3$,
that is,
for an ample $\Q$-divisor $B$ on a $g$-dimensional abelian variety $X$ with $(B^g) > g^g$ and $g \leq 3$,
there exists an effective $\Q$-divisor $F$ such that $B-F$ is nef and $\Nklt(X,F)$ contains an abelian subvariety as an irreducible component.
\end{thm}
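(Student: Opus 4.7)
The plan is to prove the theorem by induction on $g \leq 3$. The base case $g = 1$ is trivial: I would take $F = \{o\}$, the zero-dimensional abelian subvariety at the origin. Since $\deg B > 1$, $B - F$ is ample hence nef, and $\Nklt(X, F) = \{o\}$. For $g \in \{2, 3\}$, I plan to dichotomize the argument based on the asymptotic multiplicities $\mult_{S'}(\|\pi^*B - gE\|)$ along prime divisors $S \subset X$, where $\pi\colon X' \to X$ is the blow-up at $o$ and $S'$ denotes the strict transform of $S$.

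If some prime divisor $S \subset X$ satisfies $\mult_{S'}(\|\pi^*B - gE\|) \geq 1$, then \autoref{lem_S_is_abelian} yields the conclusion directly: $S$ is an abelian subvariety of codimension one with $B - S$ ample, and since $S$ is smooth one has $\Nklt(X, S) = S$, so $F = S$ works.

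Otherwise, I would employ the standard cut-down strategy for log canonical centers. Using $(B^g) > g^g$ together with asymptotic Riemann--Roch and the dimension count of $k$-jets at $o$, the pseudoeffective threshold $\tau := \sup\{t : \pi^*B - tE \text{ is pseudoeffective}\}$ strictly exceeds $g$; by convexity and continuity of $t \mapsto \mult_{S'}(\|\pi^*B - tE\|)$ (cf.\ \cite[Lemma 2.5]{lozovanu2018singular}), I can fix $t \in (g, \tau) \cap \Q$ close enough to $g$ so that $\mult_{S'}(\|\pi^*B - tE\|) < 1$ still holds for every prime divisor $S \subset X$. For $m \gg 0$, a general element $D_m^1 \in |m(\pi^*B - tE)|$ realizes the fixed-part multiplicity along every prime divisor, and the corresponding $D_m^0 \in |mB|$ (with $\pi^* D_m^0 = D_m^1 + mt E$) satisfies $\mult_o(D_m^0) \geq mt$ and $\mult_S(D_m^0)/m \to \mult_{S'}(\|\pi^*B - tE\|) < 1$ for every hypersurface $S$. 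Setting $D := (c/m) D_m^0$ for rational $c \in (g/t, 1)$ yields an effective $\Q$-divisor with $D \equiv cB$, $c < 1$, $\mult_o(D) > g$, and $\mult_S(D) < 1$ for every prime divisor $S$, provided $m$ is taken large.

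Now setting $\lambda := \lct_o(D) \leq g/\mult_o(D) < 1$, I let $Z$ denote the minimal lc center of $(X, \lambda D)$ at $o$. Any codimension-one minimal lc center of $(X, \lambda D)$ would occur in $D$ with coefficient exactly $1/\lambda > 1$, contradicting $\mult_S(D) < 1$; hence $\dim Z \leq g-2$. For $g = 2$ this forces $Z = \{o\}$, and tie-breaking via \autoref{lem_perturbation} gives the required $F$. For $g = 3$, either $\dim Z = 0$ (same conclusion) or $\dim Z = 1$, with $Z$ smooth at $o$ by \autoref{thm_minimal _center}. In the latter sub-case, \autoref{lem_deg_curve} gives three possibilities: $\langle Z \rangle = Z$ is itself an abelian curve (tie-breaking finishes); $\langle Z \rangle$ is an abelian surface with $B - \langle Z \rangle$ ample (take $F = \langle Z \rangle$); or $(B.Z) > 3$, in which case \autoref{lem_cut_center_curve} together with \autoref{rem_cut_center_curve} produces $D_1 \equiv c_1 B$ with $c_1 < 1$ whose minimal lc center at $o$ is strictly smaller than $Z$, hence the point $\{o\}$. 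The main obstacle will be making the ``generic multiplicity'' construction rigorous so that $\mult_S(D) < 1$ holds simultaneously for \emph{every} prime divisor $S$; this is precisely what rules out the otherwise troublesome dim-$2$ minimal lc center case for $g = 3$, and requires the Nakayama theory of asymptotic negative parts together with the convexity of $\mult_{S'}(\|\pi^*B - tE\|)$ in $t$.
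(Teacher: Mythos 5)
Your overall strategy follows the paper's: reduce to constructing a suitable $\Q$-divisor, pass to the minimal lc center at $o$, and then split into cases using \autoref{lem_deg_curve}, \autoref{lem_S_is_abelian}, and \autoref{lem_cut_center_curve}. The low-dimensional cases, the dimension count on the lc center, and the curve case analysis match the paper's treatment. The genuine divergence is in how the codimension-one (surface) lc center is excluded or handled, and it is there that your proposal has a gap.

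The paper does not attempt to arrange $\mult_S(D) < 1$ for \emph{all} prime divisors $S$ in advance. Instead, it fixes a rational $\delta > 0$ with $(B^3) > (3+\delta)^3$, takes $D' \equiv \pi^*B - (3+\delta)E$ general, sets $D = \pi_*D'$ and $c = \lct_o(D)$, and proceeds by cases on the minimal lc center $Z$ at $o$. When $Z$ turns out to be a surface $S$, the lc-center condition in codimension one forces $\mult_{S'}(D') = \mult_S(D) = c^{-1} \geq (3+\delta)/3$, and because $D'$ was a \emph{general} member, this directly yields $\mult_{S'}(\| \pi^*(\tfrac{3}{3+\delta}B) - 3E \|) \geq 1$. \autoref{lem_S_is_abelian} then applies to the rescaled divisor $\tfrac{3}{3+\delta}B$ (which still satisfies the volume hypothesis) to conclude that $S$ is abelian and $\tfrac{3}{3+\delta}B - S$ is ample. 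In short, the multiplicity bound is derived \emph{a posteriori} for the one specific divisor $S$ that arises, so the paper never needs a statement quantified over infinitely many prime divisors. Your Case (B), by contrast, asserts that from $\mult_{S'}(\|\pi^*B - gE\|) < 1$ for every $S$ one can choose $t > g$ rational with $\mult_{S'}(\|\pi^*B - tE\|) < 1$ still holding for \emph{every} $S$; as you yourself flag, this is the main obstacle, and it is not covered by pointwise continuity and convexity in $t$ alone, since a priori the support of the negative part could change as $t$ crosses $g$. Resolving it would require a uniform finiteness statement for the $\sigma$-decomposition along the segment $\{\pi^*B - tE\}_{t\in[g,g+\epsilon]}$ (Nakayama-type theory), which you invoke but do not prove. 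I would recommend restructuring your surface case along the paper's lines: take a specific general $D$, assume the minimal lc center is a surface $S$, and deduce the multiplicity bound from the lc-center coefficient rather than building the bound into the choice of $D$. This both closes the gap and shortens the argument. As a minor point, your Case (A) applies \autoref{lem_S_is_abelian} directly to $B$, which is fine; the paper applies it to the scaled $\tfrac{3}{3+\delta}B$ because that is the divisor for which the multiplicity bound is actually established.
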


\begin{proof}
If $\dim X=1$, we may take $F=o $. 
If $\dim X =2$, under the assumption $(B^2) >4$,
the proof of \cite[Proposition 3.1]{MR3923760} constructs an effective $\Q$-divisor $D$ 
such that $B-D$ is ample, $(X,D)$ is lc, and a minimal lc center of $(X,D)$ is a point or an elliptic curve.
Perturbing $D$ slightly by \autoref{lem_perturbation},
we obtain $F$ as in the statement of this theorem.

Assume $\dim X=3$ and let $B$ be an ample $\Q$-divisor with $(B^3) > 27$.
Let $\pi : X' \arw X $ be the blow-up at $o \in X$ and $E \subset X'$ be the exceptional divisor.
Since $(B^3) > 27$, 
we can take a rational number $\delta >0 $ such that $(B^3) > (3+\delta)^3$.
In particular,
$\pi^* B - (3+\delta) E$ is big. 
Take a general effective $\Q$-divisor $D' \equiv \pi^* B - (3+\delta) E$. 
Set $D=\pi_*(D') \equiv B$ and 
\begin{align}\label{eq_c}
c:=\lct_o(X, D) \leq \frac{3}{3+\delta} <1.
\end{align}

If the minimal lc center of $(X,cD)$ at $o$ is $\{o\}$,
a small perturbation $F$ of $cD$ satisfies the condition in the statement of this theorem.
Thus we may assume that the minimal lc center of $(X,cD)$ at $o$ is a curve or a surface.

\vspace{2mm}
\noindent
{\bf Case} $1$: The minimal lc center is a curve $C$.
\begin{itemize}
\item[Case 1-1.]  $\dim \langle C \rangle = 3$: 
In this case, we have $ (B.C) > 9 / \sqrt[3]{6} > 3$ by \autoref{lem_deg_curve} (1).
Thus we can apply \autoref{lem_cut_center_curve}
to the above $X,B,D$ by \autoref{rem_cut_center_curve},
and hence there exists an effective $\Q$-divisor 
$D_1 \equiv c_1 B$ for some $c_1 < 1$ such that
$(X,D_1)$ is lc but not klt at $o$ and the minimal lc center is $\{o\}$.
Perturbing $D_1$ slightly, 
we have an effective $\Q$-divisor $F$ on $X$ such that 
$B-F$ is ample and $\{o\}$ is an isolated point of $\Nklt(X,F)$.
\item[Case 1-2.]  $\dim \langle C \rangle = 2 $: In this case, $S= \langle C \rangle $ is an abelian surface.
By \autoref{lem_deg_curve} (2), $B-S$ is ample or $(B.C) > 3 \sqrt{2}$.

If $B-S$ is ample,
$F=S$ satisfies the condition in the statement of this theorem.

If $(B.C) > 3 \sqrt{2} >3$, 
we have  $F$ such that 
$B-F$ is ample and $\{o\}$ is an isolated point of $\Nklt(X,F)$ as in Case 1-1.
\item[Case 1-3.]  $\dim \langle C \rangle = 1 $: In this case, $C$ is an elliptic curve.
A small perturbation $F$ of $cD$ satisfies the condition in the statement of this theorem. 
\end{itemize}

\vspace{2mm}
\noindent
{\bf Case} $2$: The minimal lc center is a surface $S$.

\begin{claim}\label{claim_mult}
Let $S' \subset X'$ be the strict transform of $S$.
Then  $\mult_{S'} ( \| \pi^* (\frac{3}{3 + \delta }B) - 3 E \|) \geq 1$.
\end{claim}

\begin{proof}[Proof of \autoref{claim_mult}]
Assume $\mult_{S'} ( \| \pi^* (\frac{3}{3 + \delta }B) - 3 E \|) < 1$, that is, $\mult_{S'} ( \| \pi^* B - (3+\delta) E \|) <  (3+\delta)/3$.
Then we have $ \mult_{S'} (D') < (3+\delta)/3$ since we take general $D' \equiv \pi^* B - (3+\delta) E$.
On the other hand, since $S$ is a lc center of $(X,cD)$, 
we have $\mult_{S'} (D') =\mult_S (D) =c^{-1} \geq (3+\delta)/3$ by \ref{eq_c}, which is a contradiction.
\end{proof}

Since $\mult_{S'} ( \| \pi^* (\frac{3}{3 + \delta }B) - 3 E \|) \geq 1$ and $((\frac{3}{3 + \delta} B)^3) = (\frac{3}{3 + \delta})^3 (B^3) > 27$,
$S$ is an abelian surface and $\frac{3}{3 + \delta} B-S$ is ample by \autoref{lem_S_is_abelian}.
Hence $F=S$ satisfies the condition in the statement of this theorem.
\end{proof}

\begin{proof}[Proof of \autoref{main_thm}]
\autoref{main_thm} is nothing but an affirmative answer to \autoref{ques2} for $\dim X=3$.
Hence it follows from Theorems \ref{prop_conj1_ques3}, \ref{thm_abelian_center_dim3}.
\end{proof}

\begin{rem}\label{rem_connected component}
We note that \autoref{prop_local_enough} is not enough to prove \autoref{main_thm}
even if we use lemmas in \autoref{sec_curve_hypersurface}.
The reason is as follows:

Let $X,l,B_t=tl$ be as in \autoref{main_thm} and 
take $D\equiv B_t, c=\lct_o(D)$ 
as in the proof of \autoref{thm_abelian_center_dim3}.
Assume that the minimal lc center of $(X,cD) $ at $o$ is an elliptic curve $C$ as in Case 1-3.
Then we cannot cut down $C$ since we only assume $(B_t.C) >1$
and cannot apply \autoref{lem_cut_center_curve}.
This is the case when we essentially need \autoref{prop_conj1_ques3} or \autoref{prop_induction}.

If we perturb $cD$, 
we have $\calj(X,cD) =\cali_C $ in a neighborhood of the origin. 
It might be worth noting that
we obtain $\beta(l) < t$ if $\calj(X,cD) =\cali_C $ holds on \emph{whole} $X$ as follows:
If $\calj(X,cD) =\cali_C $, then $\cali_C \langle tl \rangle =\calj(X,cD) \langle tl \rangle$ is IT(0)
by the Nadel vanishing theorem.
Since $ \deg \calo_C(-o) \langle tl \rangle =(B_t.C) -1>0$ and $C$ is an elliptic curve,
$\cali_{o/C} \langle tl \rangle =\calo_C(-o) \langle tl \rangle$ is IT(0) as well.
Thus $\cali_o \langle tl \rangle$ is IT(0) and hence $\beta(l)< t$ follows from the exact sequence
\begin{align}\label{eq_short_exact_C}
0 \rightarrow \cali_C \rightarrow \cali_o \rightarrow \cali_{o/C} \rightarrow 0
\end{align}
and \autoref{lem_beta_IT(0)} (1).
However,
we do not know how to control the singularity of $(X,cD)$ outside the origin and hence 
do not know whether $\calj(X,cD) =\cali_C $ holds or not on whole $X$.
Thus we cannot apply the above argument to prove \autoref{main_thm},
at least without further argument about the singularities of $(X,cD)$.

The readers might ask if we use $c'=\lct(D) := \max\{ s \geq 0 \, | \, (X,s D) \text{ is log canonical} \}$ instead of $c=\lct_o(D)$.
Then $\calj(X,c'D) =\cali_{C'} $ holds on whole $X$ for a minimal lc center $C'$ of $(X,c'D)$ after perturbing $c'D$
and hence the above argument to use \ref{eq_short_exact_C} works if $C'$ is an elliptic curve.
However, if $C'$ is a curve of general type,
it is not clear whether $\cali_{o/C'} \langle tl \rangle $ is IT(0) or not.
Furthermore, we cannot apply \autoref{lem_cut_center_curve} to cut down $C'$
since $C'$ might not contain the origin and hence 
there might not exist a point $x \in C'$ with $\mult_x (D) > 3$.
Therefore, we are not sure how to treat the case
when $C'$ is a curve of general type 
if we consider $\lct(D) $ instead of $\lct_o(D)$.
\end{rem}

\begin{proof}[Proof of \autoref{cor_main}]
By \autoref{main_thm}, we have $\beta(l) < 1/(p+2)$.
Hence $L$ satisfies property $(N_p)$ 
by \autoref{thm_ Bpf_threshold}.
The vanishing of $K_{p,q}(X,L;dL)$ follows from \autoref{prop_vanishing_K}.
\end{proof}

\begin{proof}[Proof of \autoref{cor_ Bpf}]
If $(L^2.S) \geq 6 $ and $(L.C) \geq 2$ for any abelian surface $S \subset X$ and any elliptic curve $C \subset X$, then
$\beta(l) <1$ follows from \autoref{main_thm} and the assumption $(L^3) \geq 30 > 27$. Hence $L$ is basepoint free by \autoref{thm_ Bpf_threshold} (1).

On the other hand,
if $L$ is basepoint free,
so are $L|_S$ and $L|_C$ for any abelian surface $S$ and any elliptic curve $C $ in $ X$.
Since $L|_S$ and $L|_C$ are ample,
$h^0( S,L|_S) =  (L|_S^{2})/2 \geq 3$ and $h^0( C,L|_C) = \deg (L|_C) \geq 2$.
Thus we have $(L^2.S) \geq 6 , (L.C) \geq 2$.
\end{proof}

If $h^0(L) =(L^3)/3! \leq 3$,
$L$ is not basepoint free since $L$ is ample.
Hence if $h^0(L) \neq 4$,
the basepoint freeness of 
an ample line bundle $L$ on an abelian threefold
is completely determined by the numerical data
$(L^{\dim Z}.Z )$ for all abelian subvarieties $Z \subset X$,
including the data for $Z=X$.

However,
the basepoint freeness is not determined by such data
if $h^0(L) = 4$.
For example,
let $(X_1,l_1)$ and $(X_2,l_2) $ be very general polarized abelian threefolds of type $(1,1,4)$ and $(1,2,2)$ respectively.
Let $L_i$ be a line bundle representing $l_i$ for each $i=1,2$.
Then $(L_1^{3}) =(L_2^{3})= 24$.
Furthermore,
$X_1,X_2$ are simple since $(X_1,l_1)$ and $(X_2,l_2) $ are very general.
Hence the numerical data $(L^{\dim Z}.Z )$ for all abelian subvarieties $Z \subset X$
coincide for $X_1$ and $X_2$.
On the other hand,
$L_1$ is basepoint free by \cite[Proposition 2]{MR1299059},
but $L_2$ is not by \cite[Corollary 2.6]{MR1360615}, \cite[Remark 15.1.4]{MR2062673}.

In particular,
the assumption $h^0(L) \geq 5$ is sharp,
that is,
the statement of \autoref{cor_ Bpf} does not hold under the weaker assumption $h^0(L) \geq 4$.

\bibliographystyle{amsalpha}
\newcommand{\etalchar}[1]{$^{#1}$}
\providecommand{\bysame}{\leavevmode\hbox to3em{\hrulefill}\thinspace}
\providecommand{\MR}{\relax\ifhmode\unskip\space\fi MR }
\providecommand{\MRhref}[2]{%
  \href{http://www.ams.org/mathscinet-getitem?mr=#1}{#2}
}
\providecommand{\href}[2]{#2}

\end{document}